\theoremstyle{plain}
\newtheorem{theorem}{Theorem}[subsection]
\newtheorem{lemma}[theorem]{Lemma}
\newtheorem{corollary}[theorem]{Corollary}
\theoremstyle{definition}
\newtheorem{definition}[theorem]{Definition}
\newtheorem{example}[theorem]{Example}
\theoremstyle{remark}
\newtheorem{remark}[theorem]{Remark}
\numberwithin{equation}{theorem}
\newcommand{\cover}{\mathcal{U}}
\newcommand{\OO}{\mathcal{O}}
\newcommand{\anotherbullet}{\star}
\newcommand{\restricted}{\mathbin{\big\vert}}
\newcommand{\id}{\mathrm{id}}
\newcommand{\congto}{\xrightarrow{\raisebox{-.5ex}[0ex][0ex]{$\sim$}}}
\newcommand{\cech}{\check{\mathscr{C}}}
\newcommand{\GL}{\mathrm{GL}}
\newcommand{\at}{\mathrm{at}}
\newcommand{\expat}[1]{\at^{\circ#1}}
\newcommand{\stanat}[1]{\at^{\wedge#1}}
\newcommand{\simpexpat}[1]{\hat{\at}^{\circ#1}}
\newcommand{\simpstanat}[1]{\hat{\at}^{\wedge#1}}
\newcommand{\sym}[1]{S_{#1}}
\newcommand{\sgn}[1]{|#1|}
\newcommand{\gcohX}{\mathsf{Coh}(X)}
\newcommand{\greenzeroX}{\mathsf{Green}_{\nabla,0}(X_\bullet^\cover)}
\newcommand{\sgreenzeroX}{\underline{\mathsf{Green}}_{\nabla,0}(X_\bullet^\cover)}
\newcommand{\define}[1]{\textbf{#1}}
\newcommand{\nerve}[1]{X_{#1}^\cover}
\newcommand{\xtwoheadrightarrow}[2][]{%
  \xrightarrow[#1]{#2}\mathrel{\mkern-14mu}\rightarrow
}
\renewcommand{\d}{\mathrm{d}}
\renewcommand{\matrix}{\mathrm{Mat}}
\renewcommand{\ss}[1]{\varsigma_{#1}}
\DeclareMathOperator{\Hom}{Hom}
\DeclareMathOperator{\Tot}{Tot}
\DeclareMathOperator{\HH}{H}
\DeclareMathOperator{\Ker}{Ker}
\DeclareMathOperator{\LL}{L}
\DeclareMathOperator{\sheafhom}{\mathscr{H}\textup{\kern -2.5pt {\large\Fontauri\slshape om}}\,}
\DeclareMathOperator{\sheafend}{\mathscr{E}\textup{\kern -2pt {\large\Fontauri\slshape nd}}\,}
\DeclareMathOperator{\footnotesizesheafend}{\mathscr{E}\textup{\kern -2pt {\Fontauri\slshape nd}}\,}
\DeclareMathOperator{\Ext}{Ext}
\title{Simplicial Chern-Weil theory for\\coherent analytic sheaves\\{\Large Part II: {\em Barycentric connections}}}
\author{Timothy Hosgood}
\date{}
\begin{document}

\maketitle

\begin{abstract}
    In the previous part of this diptych, we defined the notion of an \emph{admissible simplicial connection}, as well as explaining how H.I. Green constructed a resolution of coherent analytic sheaves by locally free sheaves on the Čech nerve.
    This paper seeks to apply these abstract formalisms, by showing that Green's \emph{barycentric} simplicial connection is indeed admissible, and that this condition is exactly what we need in order to be able to apply Chern-Weil theory and construct characteristic classes.
    We show that, in the case of (global) vector bundles, the simplicial construction agrees with what one might construct manually: the explicit Čech representatives of the exponential Atiyah classes of a vector bundle agree.
    Finally, we summarise how all the preceding theory fits together to allow us to define Chern classes of coherent analytic sheaves, as well as showing uniqueness in the compact case.
\end{abstract}

\tableofcontents

\bigskip

\begin{center}
    \emph{This project has received funding from the European Research Council (ERC) under the European Union’s Horizon 2020 research and innovation programme.}

    \emph{(Grant Agreement No. 768679)}
\end{center}

\medskip

{This paper is one of two (the other being \cite{Hosgood2021}) to have been extracted from the author's PhD thesis \cite{Hosgood2020}. Further details and more lengthy exposition can be found there. We repeat here, however, our genuine thanks to Julien Grivaux and Damien Calaque for their tireless tutelage.}
We also sincerely thank the reviewer, whose comments and numerous corrections greatly improved the content and presentation of this work.

\section{Introduction}

    \subsection{History and motivation}

        For an in-depth history and motivation, we refer the reader to the previous part of this diptych \cite[§1]{Hosgood2021} (and we more generally assume that the reader is already somewhat familiar with its contents, although we always make explicit mention of any results that we use).
        We will however give a brief overview here.

        In 1980, Green's thesis \cite{Green1980} gave a construction of Chern classes for coherent analytic sheaves using twisting cochains, technology developed and put to great use by Toledo, Tong, and O'Brian in a series of work throughout the 70s and 80s, where they proved, amongst other things, a version of Grothendieck--Riemann--Roch for complex manifolds \cite{OTT1985}.
        Although Green's work was summarised from a more abstract point of view in \cite{TT1986}, there was still no formal framework for the ``simplicial connections'' of which he made use, nor for his resolution, which turned a twisting cochain (an inherently homotopical object) into a complex of vector bundles on the Čech nerve (something much more homotopically strict).
        The main purpose of \cite{Hosgood2021} was to rectify this situation, using the language of $(\infty,1)$-categories to turn Green's resolution into an equivalence between certain categories of complexes of coherent sheaves and that of vector bundles on the Čech nerve endowed with simplicial connections, and also to formally define the ingredients necessary to develop a simplicial version of Chern--Weil theory.
        The purpose of this present work is to show that explicit calculations one can do ``by hand'', Green's construction, and this simplicial Chern--Weil theory, all give the same Čech representatives for the Chern classes of coherent analytic sheaves.

    \subsection{Purpose and overview}

        The calculations in this paper largely focus on \emph{global}\footnote{That is, vector bundles on the nerve that are given by the pullback of vector bundles on $X$.} vector bundles on the nerve; we delay the study of coherent sheaves and arbitrary vector bundles on the nerve until the last section.
        This is because Green vector bundles on the nerve (resp. simplicial connections generated in degree zero) are really a mild generalisation of pullbacks of global vector bundles (resp. Green's {barycentric connections}), and so, by studying the latter, we can mostly understand the former.

        The main contributions of this paper are the following:
        \begin{enumerate}
            \item We manually compute explicit representatives in the Čech-de~Rham bicomplex of the first four Atiyah classes (from which we can recover the Chern classes) of an arbitrary holomorphic vector bundle on a complex manifold (\cref{section:manual-construction}).
            \item We introduce and develop a \emph{simplicial} version of Chern--Weil theory, and show that this can be applied in particular to Green's barycentric connection, placing it in a general formal framework.
                Explicit calculations show that this agrees with the manual computations above (\cref{section:simplicial-construction-for-vector-bundles}).
            \item This simplicial Chern--Weil theory allows us to construct Čech simplicial Atiyah classes of complexes of coherent analytic sheaves.
                We then show that, in the case of a \emph{compact} complex manifold, we recover exactly Čech representatives of the classical Chern classes (\cref{corollary:main-corollary}).
        \end{enumerate}

        In \cref{section:preliminaries} we recall the definition of \define{Atiyah classes} and study some of their properties, before recalling the definition of \define{fibre integration} of simplicial differential forms.

        The purpose of \cref{section:manual-construction} is simply to detail an inefficient algorithm for lifting exponential Atiyah classes of vector bundles to closed elements in the Čech-de~Rham bicomplex.
        This isn't particularly useful on its own, but will later act as an assurance that the simplicial construction does indeed given the results that we would hope for.
        Indeed, this section demonstrates the need for a more abstract approach.

        As promised in the previous part of this diptych, we explain, in \cref{section:characteristic-classes-via-admissible-simplicial-connections}, how the admissibility condition of simplicial connections is exactly what we need in order to be able to apply \define{Chern-Weil theory}, and thus get well-defined characteristic classes from our simplicial connections.
        
        Although the preceding section is general enough to apply to arbitrary vector bundles on the nerve, in \cref{section:simplicial-construction-for-vector-bundles}, we apply the results to the specific case of Green's \define{barycentric connection} on global vector bundles.
        This allows us to compare the resulting Čech representatives to those calculated in \cref{section:manual-construction} to see that they do indeed agree.

        Finally, \cref{section:from-vector-bundles-to-coherent-sheaves} gives a summary of the whole story contained in this two-part work: we already know that we can resolve coherent analytic sheaves by vector bundles on the nerve, and that we even get sufficiently nice connections on the latter; now we can apply the results in this paper to obtain characteristic classes.
        There is an even stronger result, further reinforcing the fact that this simplicial construction is indeed the ``good'' one: whenever $X$ is compact, we can appeal to an axiomatisation of Chern classes to show that the construction given here agrees with any other construction that one might construct in some other way.

\section{Preliminaries}\label{section:preliminaries}

    Throughout, let $(X,\OO_X)$ be a paracompact complex-analytic manifold with its structure sheaf the sheaf of holomorphic functions; let $\cover$ be a locally-finite Stein open cover such that all finite intersections are also Stein.
    Let $E$ be a vector bundle of rank $\mathfrak{r}$ on $X$, and assume that it is trivialised by $\cover$.
    We have trivialisation maps $\varphi_\alpha\colon E\restricted U_\alpha\congto(\OO_X\restricted U_\alpha)^\mathfrak{r}$, and transition maps
    \[
        M_{\alpha\beta}\colon(\OO_X\restricted{U_{\alpha\beta}})^\mathfrak{r}\congto(\OO_X\restricted{U_{\alpha\beta}})^\mathfrak{r}
    \]
    given on overlaps by \mbox{$M_{\alpha\beta}=\varphi_\alpha\circ\varphi_\beta^{-1}$}.
    By picking some basis of sections $\{s^{\alpha}_1,\ldots,s^{\alpha}_\mathfrak{r}\}$ of $E$ over $U_\alpha$ we can realise the $M_{\alpha\beta}$ as $(\mathfrak{r}\times\mathfrak{r})$-matrices that describe the change of basis when we go from $E\restricted U_\beta$ to $E\restricted U_\alpha$, i.e.
    \begin{equation}
    \label{equation:transition-maps}
        s^\alpha_k = \sum_\ell(M_{\alpha\beta})_k^\ell s^\beta_\ell.
    \end{equation}

    We continue to use the conventions, definitions, and notation from the previous paper \cite{Hosgood2021}, but recall particularly pertinent results when necessary.
    In addition, we write $\sym{k}$ to mean the symmetric group on $k$ elements, and $\sgn{\sigma}$ to mean the sign of a permutation $\sigma\in\sym{k}$.

    \subsection{Standard and exponential Atiyah classes}

        \begin{definition}
            The \define{Atiyah exact sequence} (or \define{jet sequence}) of a locally free sheaf $E$ of $\OO_X$-modules is the short exact sequence
            \[
                0
                \to
                E\otimes_{\OO_X}\Omega_X^1
                \to
                J^1(E)
                \to
                E
                \to
                0
            \]
            where $J^1(E)=(E\otimes\Omega_X^1)\oplus E$ as a $\mathbb{C}_X$-module (writing $\mathbb{C}_X$ to mean the constant sheaf on $X$ of value $\mathbb{C}$), but with an $\OO_X$-action defined by
            \[
                f(s\otimes\omega,t)
                =
                (fs\otimes\omega + t\otimes\d f, ft).
            \]

            The \define{Atiyah class} $\at_E$ of $E$ is defined to be the extension class of this short exact sequence:
            \[
                \at_E
                =
                [J^1(E)] \in \Ext_{\OO_X}^1(E,E\otimes\Omega_X^1).
            \]
        \end{definition}

        \begin{remark}
            We are interested in the Atiyah class, as well as the higher \emph{standard} and \emph{exponential} Atiyah classes, (which we define later), of a vector bundle because their trace is ``equivalent'', in some sense that we make precise below, to the Chern classes.
            \begin{itemize}
                \item \cite[Proposition~4.3.10]{Huybrechts2005} shows us that the the trace of the Atiyah class gives the same class in cohomology as the curvature of the Chern connection;
                combined with \cite[Example~4.4.8~i)]{Huybrechts2005}, this tells us that the standard Atiyah classes and the Chern classes are the same, up to a constant.
                    There are some more general comments about this equivalence in \cite[p.~200]{Huybrechts2005}, just below Example~4.4.11.
                \item \cite[Exercise~4.4.11]{Huybrechts2005} tells us that the Chern characters (or exponential Chern classes) are given (up to a constant) by the traces of the exponential Atiyah classes.
                \item The traces of the Atiyah classes satisfy an axiomatisation of Chern classes that, \emph{in the case where $X$ is compact}, guarantees uniqueness.
                    This is explained in \cref{subsection:the-compact-case}.
            \end{itemize}

            Note that the treatment of the Atiyah class in \cite{Huybrechts2005} is somehow reverse to that found here: it starts there with the Atiyah class as a Čech cocycle and \emph{then} shows that it is equivalent to some splitting of the Atiyah exact sequence.
        \end{remark}

        \begin{definition}
            A \define{holomorphic (Koszul) connection} $\nabla$ on a vector bundle $E$ on $X$ is a (holomorphic) splitting of the Atiyah exact sequence of $E$.
            By enforcing the \define{Leibniz rule}
            \[
                \nabla(s\otimes\omega)
                =
                \nabla s\wedge\omega + s\otimes\d\omega
            \]
            we can extend any connection $\nabla\colon E\to E\otimes\Omega_X^1$ to a map\footnote{Using the same symbol $\nabla$ to denote the connection as well as any such extension is a common abuse of notation.} $\nabla\colon E\otimes\Omega_X^r\to E\otimes\Omega_X^{r+1}$.
            The \define{curvature} $\kappa(\nabla)$ of a connection $\nabla$ is the map
            \[
                \kappa(\nabla)=\nabla\circ\nabla\colon
                E\to E\otimes\Omega_X^2
            \]
            given by enforcing the above Leibniz rule.

            Given a connection $\nabla$ on $E$, we say that a section $s\in\Gamma(U,E)$ is \define{flat} if $\nabla(s)=0$; we say that the connection $\nabla$ is \define{flat} if $\kappa(\nabla)=0$.
        \end{definition}

        \begin{remark}
            Given the definition of a connection as a splitting of the Atiyah exact sequence, and the Atiyah class as the extension class of the Atiyah exact sequence, we see that one way of understanding the Atiyah class is as \emph{the obstruction to admitting a (global) holomorphic connection}.
        \end{remark}

        \begin{lemma}
            Any vector bundle on a Stein manifold admits a holomorphic connection.
            \begin{proof}
                This is an application of Cartan's Theorem B, using the fact that a vector bundle can be viewed as a locally free sheaf.
                See \cite[Lemma~O.E.3]{Green1980}.
            \end{proof}
        \end{lemma}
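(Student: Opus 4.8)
The plan is to show that the Atiyah class $\at_E$ vanishes. As the preceding remark records, $\at_E\in\Ext_{\OO_X}^1(E,E\otimes\Omega_X^1)$ is precisely the extension class of the Atiyah exact sequence, and hence the obstruction to splitting it, i.e.\ to the existence of a global holomorphic connection. So it suffices to prove that this $\Ext$ group itself vanishes when $X$ is Stein.

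First I would rewrite the $\Ext$ group as sheaf cohomology. Since $E$ is locally free, the functor $\sheafhom(E,-)$ is exact, so the local Ext sheaves $\mathcal{E}xt^j_{\OO_X}(E,E\otimes\Omega_X^1)$ vanish for $j>0$; the local-to-global Ext spectral sequence then degenerates and yields
\[
    \Ext_{\OO_X}^1(E,E\otimes\Omega_X^1)
    \cong
    \HH^1\bigl(X,\sheafhom(E,E\otimes\Omega_X^1)\bigr)
    =
    \HH^1\bigl(X,\sheafend(E)\otimes\Omega_X^1\bigr).
\]
The point of this rewriting is that $\sheafend(E)\otimes\Omega_X^1$ is again locally free, and in particular is a coherent analytic sheaf.

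The final step is to invoke Cartan's Theorem B: on a Stein manifold every coherent analytic sheaf has vanishing higher cohomology, so $\HH^1(X,\sheafend(E)\otimes\Omega_X^1)=0$. Hence $\at_E=0$, the Atiyah sequence splits, and $E$ admits a holomorphic connection.

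I expect the main subtlety to be justifying the identification of $\Ext^1$ with $\HH^1$ of a coherent sheaf — specifically the vanishing of the local Ext sheaves and the resulting collapse of the local-to-global spectral sequence — rather than the appeal to Theorem B, which is essentially immediate once the class has been realised in the cohomology of a coherent sheaf. An alternative, more hands-on route would patch local connections together: choosing connections $\nabla_i$ on a trivialising cover, the differences $\nabla_i-\nabla_j$ define a \v{C}ech $1$-cocycle valued in $\sheafend(E)\otimes\Omega_X^1$, and Theorem B (applied to the Stein cover) shows it is a coboundary, so the $\nabla_i$ can be corrected to agree on overlaps and glue to a global connection. The essential feature of either argument is the same: since holomorphic partitions of unity are unavailable, the gluing must be powered by cohomological vanishing rather than by a smooth bump-function argument.
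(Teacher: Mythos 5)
Your proof is correct and follows exactly the route the paper indicates: the paper's proof is just a pointer to Cartan's Theorem B applied to the locally free sheaf viewpoint (citing Green's Lemma O.E.3), and your argument --- identifying $\at_E$ as the obstruction in $\Ext^1_{\OO_X}(E,E\otimes\Omega_X^1)\cong\HH^1\bigl(X,\sheafend(E)\otimes\Omega_X^1\bigr)$ and killing it by Theorem B since this sheaf is coherent --- is precisely the fleshed-out version of that citation. Both your spectral-sequence justification of the $\Ext$-to-$\HH^1$ identification and your alternative \v{C}ech patching argument are sound, so there is nothing to correct.
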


        \begin{lemma}\label{lemma:cech-cocycle-of-atiyah-class}
            The Atiyah class of $E$ is represented by the Čech cocycle
            \[
                \left\{
                    \nabla_\beta \restricted U_{\alpha\beta}
                    -
                    \nabla_\alpha \restricted U_{\alpha\beta}
                \right\}_{\alpha,\beta}
                \in
                \cech_\cover^1\left(
                    \sheafhom\left(E,E\otimes\Omega_X^1\right)
                \right).
            \]
            \begin{proof}
                First, recall that the difference of any two connections is exactly an $\OO_X$-linear map.
                Secondly, note that the cocycle condition is indeed satisfied, since
                \begin{equation*}
                    (\nabla_\beta-\nabla_\alpha) + (\nabla_\gamma-\nabla_\beta) = \nabla_\gamma-\nabla_\alpha.
                \end{equation*}
                Combined, this tells us that $\{\nabla_\beta-\nabla_\alpha\}_{\alpha,\beta}\in\cech_\cover^1(\sheafhom(E,E\otimes\Omega_X^1))$.
                Then we use the isomorphisms
                \begin{equation*}
                    \Ext_{\OO_X}^1(E,E\otimes\Omega_X^1) \cong \Hom_{\mathcal{D}(X)}(E,E\otimes\Omega_X^1[1]) \cong \HH^1\big(X,\sheafhom(E,E\otimes\Omega_X^1)\big).
                \end{equation*}
                Finally, we have to prove that the class thus defined in homology agrees with that given in our definition of the Atiyah class.
                We show this by proving a more general fact.

                Let $0\to \mathcal{A}\to \mathcal{B}\to \mathcal{C}\to 0$ be a short exact sequence in some abelian category $\mathscr{A}$.
                By definition, $[\mathcal{B}]\in\Ext_\mathscr{A}^1(\mathcal{C},\mathcal{A})$ is the class in $\Hom_{\mathcal{D}(\mathscr{A})}(\mathcal{C},\mathcal{A}[1])$ of the canonical morphism $\mathcal{C}\to\mathcal{A}[1]$ constructed using $\mathcal{B}$ as follows:
                \begin{enumerate}[(i)]
                    \item consider $\mathcal{A}\to\mathcal{B}$ as a complex with $\mathcal{B}$ in degree~$0$, and take the quasi-isomorphism $(\mathcal{A}\to \mathcal{B})\congto \mathcal{C}$;
                    \item invert this quasi-isomorphism to get a map $\mathcal{C}\congto(\mathcal{A}\to \mathcal{B})$ such that the composite $\mathcal{C}\congto(\mathcal{A}\to \mathcal{B})\to\mathcal{B}\to\mathcal{C}$ is the identity;
                    \item compose with the identity map $(\mathcal{A}\to \mathcal{B})\to \mathcal{A}[1]$.
                \end{enumerate}
                When $\mathscr{A}$ is the category of locally free sheaves on $X$, we can realise the quasi-isomorphism \mbox{$\mathcal{C}\congto(\mathcal{A}\to \mathcal{B})$} as a quasi-isomorphism $\mathcal{C}\congto\cech^\bullet(\mathcal{A}\to\mathcal{B})$ by using the Čech complex of a complex:
                \begin{equation*}
                    \cech^\bullet(\mathcal{A}\to \mathcal{B}) = \cech^0(\mathcal{A}) \xrightarrow{(\check\delta,f)} \cech^1(\mathcal{A})\oplus\cech^0(\mathcal{B}) \xrightarrow{(\check\delta,-f,\check\delta)} \cech^2(\mathcal{A})\oplus\cech^1(\mathcal{B}) \xrightarrow{(\check\delta,f,\check\delta)} \ldots
                \end{equation*}
                where $\cech^0(\mathcal{A})$ is in degree $-1$.
                If we have local sections $\sigma_\alpha\colon \mathcal{C}\restricted{U_\alpha}\to \mathcal{B}\restricted{U_\alpha}$ then $\sigma_\beta-\sigma_\alpha$ lies in the kernel $\Ker(\mathcal{B}\restricted{U_{\alpha\beta}}\to \mathcal{C}\restricted{U_{\alpha\beta}})$, and so we can lift this difference to $\mathcal{A}$, giving us the map
                \begin{equation*}
                    (\{\sigma_\alpha\}_\alpha, \{\sigma_\beta-\sigma_\alpha\}_{\alpha,\beta})\colon \mathcal{C} \to \cech^0(\mathcal{B})\oplus\cech^1(\mathcal{A}).
                \end{equation*}
                This map we have constructed is exactly $[\mathcal{B}]$.
                More precisely,
                \begin{equation*}
                    \begin{array}{rcccl}
                        \Ext_{\OO_X}^1(\mathcal{C},\mathcal{A}) &\cong& \Hom_{\mathcal{D}(X)}(\mathcal{C},\mathcal{A}[1]) &\cong& \HH^1\big(X,\sheafhom(\mathcal{C},\mathcal{A})\big)\\[.8em]
                        [\mathcal{B}] &\leftrightarrow& \mathcal{C}\congto(\mathcal{A}\to \mathcal{B})\to \mathcal{A}[1] &\leftrightarrow& [\{\sigma_\beta-\sigma_\alpha\}_{\alpha,\beta}].
                    \end{array}\qedhere
                \end{equation*}
            \end{proof}
        \end{lemma}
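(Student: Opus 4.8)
The plan is to identify the extension class $\at_E = [J^1(E)]$ with the stated cocycle under the standard chain of isomorphisms
\[
\Ext_{\OO_X}^1\bigl(E, E\otimes\Omega_X^1\bigr)
\cong
\Hom_{\mathcal{D}(X)}\bigl(E, E\otimes\Omega_X^1[1]\bigr)
\cong
\HH^1\bigl(X, \sheafhom(E, E\otimes\Omega_X^1)\bigr).
\]
First I would check that we genuinely have a cocycle. The local connections $\nabla_\alpha$ exist because each $U_\alpha$ is Stein, and since any two splittings of a short exact sequence differ by an $\OO_X$-linear map, each difference $\nabla_\beta\restricted U_{\alpha\beta} - \nabla_\alpha\restricted U_{\alpha\beta}$ is a section of $\sheafhom(E, E\otimes\Omega_X^1)$; the telescoping identity $(\nabla_\beta - \nabla_\alpha) + (\nabla_\gamma - \nabla_\beta) = \nabla_\gamma - \nabla_\alpha$ then shows its Čech differential vanishes. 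The second isomorphism above is where the Stein hypothesis is used: as $\cover$ is acyclic for the sheaf in question, the Čech complex $\cech_\cover^\bullet$ computes sheaf cohomology, so it suffices to match $\at_E$ against this explicit cocycle rather than against an abstract cohomology class.

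The substantive step is therefore to show that the morphism $E \to (E\otimes\Omega_X^1)[1]$ representing $\at_E$ in $\mathcal{D}(X)$ is carried, under passage to Čech cochains, to $\{\nabla_\beta - \nabla_\alpha\}$; and I would establish this in the generality of an arbitrary short exact sequence $0 \to \mathcal{A} \to \mathcal{B} \to \mathcal{C} \to 0$. By definition $[\mathcal{B}]$ is obtained by inverting the quasi-isomorphism $(\mathcal{A}\to\mathcal{B}) \congto \mathcal{C}$, with $\mathcal{B}$ in degree $0$, and composing with the canonical map $(\mathcal{A}\to\mathcal{B}) \to \mathcal{A}[1]$. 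To turn this roof into a Čech representative I would resolve by the Čech complex $\cech^\bullet(\mathcal{A}\to\mathcal{B})$ of the two-term complex, and observe that a choice of local splittings $\sigma_\alpha\colon \mathcal{C}\restricted U_\alpha \to \mathcal{B}\restricted U_\alpha$ provides an explicit quasi-inverse to $\cech^\bullet(\mathcal{A}\to\mathcal{B}) \congto \mathcal{C}$. Tracing a local section of $\mathcal{C}$ through this quasi-inverse, its degree-$0$ component is $\{\sigma_\alpha\}$ while its $\cech^1(\mathcal{A})$-component is the lift to $\mathcal{A}$ of the differences $\sigma_\beta - \sigma_\alpha \in \Ker(\mathcal{B}\restricted U_{\alpha\beta} \to \mathcal{C}\restricted U_{\alpha\beta})$; composing with the projection to $\mathcal{A}[1]$ kills the degree-$0$ term and leaves precisely $\{\sigma_\beta - \sigma_\alpha\}$. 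Specialising to $\mathcal{A} = E\otimes\Omega_X^1$, $\mathcal{B} = J^1(E)$, $\mathcal{C} = E$, with $\sigma_\alpha = \nabla_\alpha$, then gives the claim.

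The step I expect to be the main obstacle is this final diagram chase: one must verify that the quasi-inverse assembled from the $\sigma_\alpha$ is genuinely a chain map for the Čech differential, and that post-composing with $(\mathcal{A}\to\mathcal{B}) \to \mathcal{A}[1]$ selects exactly the $\cech^1(\mathcal{A})$-component and no contribution from $\cech^0(\mathcal{B})$. This is entirely a matter of pinning down the degree and sign conventions in $\cech^\bullet(\mathcal{A}\to\mathcal{B})$; once those are fixed, the identification is forced, and the result follows by specialisation.
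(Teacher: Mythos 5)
Your proposal is correct and follows essentially the same route as the paper's proof: verifying the cocycle condition via the telescoping identity, passing through $\Ext^1 \cong \Hom_{\mathcal{D}(X)} \cong \HH^1$, and then identifying the extension class in the generality of an arbitrary short exact sequence $0\to\mathcal{A}\to\mathcal{B}\to\mathcal{C}\to0$ by inverting the quasi-isomorphism through the Čech complex $\cech^\bullet(\mathcal{A}\to\mathcal{B})$, with local splittings $\sigma_\alpha$ yielding the representative $\{\sigma_\beta-\sigma_\alpha\}$ after projection to $\mathcal{A}[1]$. The only difference is cosmetic: you flag the sign and degree conventions of the Čech-of-a-complex as the delicate point to pin down, which the paper handles by writing out that complex explicitly.
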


        \begin{definition}\label{definition:omega-alpha-beta}
            We write $\omega_{\alpha\beta}$ for the cocycle $(\nabla_\beta \restricted U_{\alpha\beta} - \nabla_\alpha \restricted U_{\alpha\beta})\in\cech_\cover^1(\sheafhom(E,E\otimes\Omega_X^1))$.
        \end{definition}

        \begin{remark}
            When $\mathcal{E}$, $\mathcal{F}$, and $\mathcal{G}$ are sheaves of $\OO_X$-modules, with $\mathcal{G}$ locally free, we have the isomorphism
            \begin{equation*}
                \sheafhom(\mathcal{E},\mathcal{F}\otimes\mathcal{G}) \cong \sheafhom(\mathcal{E},\mathcal{F})\otimes\mathcal{G}.
            \end{equation*}
            In particular,
            \begin{equation*}
                \HH^1\big(X,\sheafhom(E,E\otimes\Omega_X^1)\big) \cong \HH^1\big(X,\sheafend(E)\otimes\Omega_X^1\big).
            \end{equation*}
            Taking the trivialisation over $U_\alpha$, this lets us consider $\omega_{\alpha\beta}$ as an \define{endomorphism-valued $1$-form}: an $(\mathfrak{r}\times\mathfrak{r})$-matrix of (holomorphic) $1$-forms on $X$ (where $\mathfrak{r}$ is the rank of $E$).
        \end{remark}

        \begin{remark}\label{remark:cup-product-cech}
            Recall that, for sheaves $\mathscr{F}$ and $\mathscr{G}$ of $\OO_X$-modules, we have the cup product
            \begin{equation*}
                \smile \colon \HH^m(X,\mathscr{F})\otimes\HH^n(X,\mathscr{G}) \to \HH^{m+n}(X,\mathscr{F}\otimes\mathscr{G})
            \end{equation*}
            which is given in Čech cohomology by the tensor product: $(a\smile b)_{\alpha\beta\gamma} = (a)_{\alpha\beta}\otimes(b)_{\beta\gamma}$.
        \end{remark}

        \begin{definition}\label{definition:second-exponential-atiyah-class}
            We define the \define{second \emph{exponential}\footnote{In general, we will be interested in \emph{standard} Atiyah classes, but we consider the manual construction of \emph{exponential} ones since these can be written down very explicitly, as shown in \cref{section:manual-construction}.} Atiyah class} $\expat{2}_E$ by the following construction:
            take the cup product
            \begin{equation*}
                (\at_E\otimes\,\id_{\Omega_X^1})\smile\at_E \in \HH^2\big(X,\sheafhom(E\otimes\Omega_X^1,E\otimes\Omega_X^1\otimes\Omega_X^1)\otimes\sheafhom(E,E\otimes\Omega_X^1)\big)
            \end{equation*}
            and apply the composition map
            \begin{equation*}
                \HH^m\big(X,\sheafhom(\mathcal{F},\mathcal{G})\otimes\sheafhom(\mathcal{E},\mathcal{F})\big) \to \HH^m\big(X,\sheafhom(\mathcal{E},\mathcal{G})\big)
            \end{equation*}
            to obtain
            \begin{align*}
                (\at_E\otimes\,\id_{\Omega_X^1})\smile\at_E &\in \HH^2\big(X,\sheafhom\left(E,E\otimes\Omega_X^1\otimes\Omega_X^1\right)\big)\\
                &\cong \HH^2\big(X,\sheafend(E)\otimes\Omega_X^1\otimes\Omega_X^1\big)
            \end{align*}
            and then apply the wedge product (of differential forms) to get
            \begin{equation*}
                \expat{2}_E=(\at_E\otimes\,\id_{\Omega_X^1})\wedge(\at_E) \in \HH^2\big(X,\sheafend(E)\otimes\Omega_X^2)\big).\qedhere
            \end{equation*}
            
            In general, the \define{$k$-th exponential Atiyah class} $\expat{k}_E$ is the class
            \begin{equation*}
                \expat{k}_E = \prod_{i=1}^k\left(\at_E\otimes\id_{\Omega_X^1}^{\otimes(k-i)}\right) \in \HH^k\big(X,\sheafend(E)\otimes\Omega_X^k)\big)
            \end{equation*}
            where the product is given by applying composition and then the wedge product (of forms) as above.
        \end{definition}
            
        \begin{remark}
            As a general note on notation, we will omit the wedge symbol $\wedge$ when talking about the wedge product of differential forms (or we will use $\cdot$ if we have to use any symbol at all), and reserve it solely for the wedge product of endomorphisms.
            In particular, for endomorphism-valued forms $M$ and $N$, we write $MN$ (or \mbox{$M\cdot N$}) to mean the object given by composing the endomorphisms and wedging the forms, and $M\wedge N$ to mean the object given by wedging the endomorphisms (following \cite[Definition~2.1.1]{Ble1981}) and wedging the forms.
            In terms of $(2\times2)$-matrices (i.e. taking $E$ to be of rank 2), this means
            \begin{align*}
                \begin{pmatrix}
                    a&b\\
                    c&d
                \end{pmatrix}\cdot
                \begin{pmatrix}
                    e&f\\
                    g&h
                \end{pmatrix}
                &= 
                \begin{pmatrix}
                    ae+bg&af+bh\\
                    ce+dg&cf+dh
                \end{pmatrix}
                \in\Gamma\big(U,\Omega_X^2\otimes\sheafend(E)\big)\\
                \begin{pmatrix}
                    a&b\\
                    c&d
                \end{pmatrix}\wedge
                \begin{pmatrix}
                    e&f\\
                    g&h
                \end{pmatrix}
                &= 
                \det\begin{pmatrix}
                    a&f\\
                    c&h
                \end{pmatrix}=ah-fc+bg-de
                \in\Gamma\big(U,\Omega_X^2\otimes\sheafend(E\wedge E)\big)
                \cong\Gamma\big(U,\Omega_X^2\big).
            \end{align*}
            Note that, if we take the trace, then these two objects will both be $2$-forms on $U$.
            Similarly, $\tr(M^k)$ and $\tr(\wedge^kM)$ are both just $k$-forms on $U$.
        \end{remark}

        \begin{remark}
            The classical theory of Chern classes has two important ``types'' of Chern class: standard and exponential.
            For now, we are content with simply saying, as a definition, that the polynomial that gives the exponential classes is $\tr(M^p)$, and the polynomial that gives the standard classes is $\tr(\wedge^pM)$.
            Caution is needed when discussing the $k$-th Atiyah class though: there is \emph{no} trace in the definition; to obtain characteristic classes we \emph{have} to take the trace.
        \end{remark}

        \begin{definition}\label{definition:second-standard-atiyah-class}
            We define the \define{second \emph{standard} Atiyah class} $\stanat{2}_E$ by the following construction:
            take the cup product
            \begin{align*}
                \at_E\smile\at_E &\in \HH^2\big(X,\sheafhom(E,E\otimes\Omega_X^1)\otimes\sheafhom(E,E\otimes\Omega_X^1)\big)\\
                &\cong \HH^2\big(X,\sheafend(E)\otimes\sheafend(E)\otimes\Omega_X^1\otimes\Omega_X^1\big)
            \end{align*}
            and then apply the wedge product of endomorphisms and the wedge product of forms to get
            \begin{equation*}
                \stanat{2}_E \in \HH^2\big(X,\sheafend(E\wedge E)\otimes\Omega_X^2\big).
            \end{equation*}
            
            In general, the \define{$k$-th standard Atiyah class} $\stanat{k}_E$ is the class
            \begin{equation*}
                \stanat{k}_E=\bigwedge_{i=1}^k\at_E \in \HH^k\big(X,\sheafend(\wedge^kE)\otimes\Omega_X^k\big).
            \end{equation*}
        \end{definition}

        \begin{remark}\label{remark:have-to-change-base-for-higher-atiyah-class}
            We can find an explicit representative for $\expat{2}_E$ by using \cref{remark:cup-product-cech}:
            \[
                \big((\at_E\otimes\id_{\Omega_X^1})\smile(\at_E)\big)_{\alpha\beta\gamma}
                =
                \left(\at_E\otimes\id_{\Omega_X^1}\right)_{\alpha\beta}\otimes(\at_E)_{\beta\gamma}
            \]
            which corresponds to
            \[
                \left(\omega_{\alpha\beta}\otimes\id_{\Omega_{U_{\alpha\beta}}^1}\right)\otimes\omega_{\beta\gamma}
                \in
                \matrix_{\mathfrak{r}\times\mathfrak{r}}\left(
                    \Omega^1(U_{\alpha\beta}) \otimes \Omega^1(U_{\alpha\beta})
                \right) \otimes \matrix_{\mathfrak{r}\times\mathfrak{r}}\left(
                    \Omega^1(U_{\beta\gamma})
                \right)
            \]
            where $\matrix_{\mathfrak{r}\times\mathfrak{r}}(A)$ is the collection of $A$-valued $(\mathfrak{r}\times \mathfrak{r})$-matrices.
            But before composing these two matrices, as described in \cref{definition:second-exponential-atiyah-class}, we first have to account for the change of trivialisation from $U_{\beta\gamma}$ to $U_{\alpha\beta}$.
            That is, after applying composition and the wedge product, we have
            \begin{equation*}
                \left(\expat{2}_{E}\right)_{\alpha\beta\gamma} = \omega_{\alpha\beta}\wedge M_{\alpha\beta}\omega_{\beta\gamma}M_{\alpha\beta}^{-1}.
            \end{equation*}
        \end{remark}

        \begin{remark}\label{remark:order-of-base-change-and-composition}
            We know that $\expat{3}_E$ is represented locally by $\omega_{\alpha\beta}\omega_{\beta\gamma}\omega_{\gamma\delta}$, but where $\omega_{\beta\gamma}$ and $\omega_{\gamma\delta}$ undergo a base change to become $\Omega^1(U_{\alpha\beta})$-valued.
            But then there seem to be two choices of how we might calculate this:
            \begin{enumerate}[1.]
                \item base change $\omega_{\gamma\delta}$ to be $\Omega^1(U_{\beta\gamma})$-valued; compose with $\omega_{\beta\gamma}$; and \emph{then} base change this composition to be $\Omega^1(U_{\alpha\beta})$-valued; or
                \item base change both $\omega_{\gamma\delta}$ and $\omega_{\beta\gamma}$ to be $\Omega^1(U_{\alpha\beta})$-valued; and \emph{then} perform the triple composition.
            \end{enumerate}
            These two would give the same result if
            \[
                \omega_{\alpha\beta} \wedge M_{\alpha\beta}(\omega_{\beta\gamma} \wedge M_{\beta\gamma}\omega_{\gamma\delta}M_{\beta\gamma}^{-1})M_{\alpha\beta}^{-1}
                =
                \omega_{\alpha\beta} \wedge M_{\alpha\beta}\omega_{\beta\gamma}M_{\alpha\beta}^{-1} \wedge M_{\alpha\gamma}\omega_{\gamma\delta}M_{\alpha\gamma}^{-1}
            \]
            and, thankfully, this equality does indeed hold, thanks to the cocycle condition on the $M_{\alpha\beta}$ and some form of associativity\footnote{That is, $A\cdot MB = AM\cdot B$, where $M$ is a matrix of $0$-forms.}, and so we can use whichever one we so please.
        \end{remark}

    \subsection{Fibre integration}
    \label{subsection:fibre-integration}

        Here we recall some results summarised in \cite[§2.3]{Hosgood2021}.
        Let $Y_\bullet$ be a simplicial complex manifold.
        Following \cite{Dupont1976}, we define a \define{simplicial differential $r$-form on $Y_{\bullet}$} to be a family $(\omega_p)_{p\in\mathbb{N}}$ of forms, with $\omega_p$ a global section of the sheaf
        \[
            \bigoplus_{i+j=r}
                \pi_{Y_p}^* \Omega_{Y_p}^i
                \otimes_{\OO_{Y_p\times \Delta^p_\mathrm{extd}}}
                \pi_{\Delta^p_\mathrm{extd}}^* \Omega_{\Delta^p_\mathrm{extd}}^j
        \]
        (where $\Delta^p_\mathrm{extd}$ is the affine subspace of $\mathbb{R}^{p+1}$ given by the vanishing of $1-\sum_{m=0}^p x_p$; where $\Omega_{Y_p}$ is the sheaf of \emph{holomorphic} forms, and $\Omega_{\Delta^p_\mathrm{extd}}$ is the sheaf of \emph{smooth} forms; and where $\pi_{Y_p}$ and $\pi_{\Delta^p_\mathrm{extd}}$ are the projection maps from $Y_p\times\Delta^p_\mathrm{extd}$) such that, for all \emph{coface} maps $f_p^i\colon[p-1]\to[p]$,
        \begin{equation}
        \label{equation:simplicial-gluing-condition-for-forms}
            \left(Y_\bullet f_p^i\times\id\right)^*\omega_{p-1}
            = \left(\id\times f_p^i\right)^*\omega_p
            \in \Omega^r(Y_{p}\times\Delta^{p-1}).
        \end{equation}

        We write $\Omega^{r,\Delta}(Y_\bullet)$ to mean the algebra of all simplicial differential $r$-forms on $Y_\bullet$.
        We can describe each $\omega_p$ as a form \define{of type $(i,j)$}, by writing $\omega_p = \xi_p\otimes\tau_p$, where $\xi_p$ is the $Y_p$-part of $\omega_p$, and $\tau_p$ is the $\Delta^p$-part of $\omega_p$, and then setting $i=|\xi_p|$ and $j=|\tau_p|$.
        This lets us define a \define{differential}
        \[
            \d \colon \Omega^{r,\Delta}(Y_\bullet) \longrightarrow \Omega^{r+1,\Delta}(Y_\bullet)
        \]
        which is given by the Koszul convention with respect to the type of the form:
        \begin{align*}
            \d(\xi_p\otimes\tau_p)
            &= \left(\d_{Y_\bullet} + (-1)^{|\xi_p|}\d_{\Delta^\bullet}\right) (\xi_p\otimes\tau_p)
        \\  &= \d\xi_p\otimes\tau_p + (-1)^{|\xi_p|}\xi_p\otimes\d\tau_p.
        \end{align*}

        \begin{lemma}[Dupont's fibre integration]\label{lemma:dupont's-fibre-integration}
            There is a {quasi-isomorphism} which, for each fixed degree $r$, consists of a map
            \begin{equation}
                \int_{\Delta^\bullet}\colon \Omega^{r,\Delta}(Y_\bullet)
                \to
                \bigoplus_{p=0}^r\Omega^{r-p}(Y_p)
            \end{equation}
            induced by \define{fibre integration}
            \begin{equation}
                \int_{\Delta^p}\colon\Omega^{r,\Delta}(Y_\bullet)
                \to
                \Omega^{r-p}(Y_p)
            \end{equation}
            where the latter is given by integrating the type $(r-p,p)$ part of a simplicial form over the geometric realisation of the $p$-simplex with its canonical orientation.

            In particular, taking $Y_\bullet=\nerve{\bullet}$ gives
            \[
                \int_{\Delta^\bullet}\colon \Omega^{r,\Delta}(\nerve{\bullet})
                \to
                \bigoplus_{p=0}^r\Omega^{r-p}(\nerve{p})
                \cong
                \Tot^r\cech^\bullet(\Omega^\bullet_X).
            \]
        \end{lemma}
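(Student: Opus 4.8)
The plan is to reduce the substance of the statement to Dupont's original theorem \cite{Dupont1976}, and to spell out carefully only the two things we genuinely need: that $\int_{\Delta^\bullet}$ is a morphism of complexes, and that its target can be identified with the Čech--de Rham total complex in the case $Y_\bullet=\nerve{\bullet}$. As $r$ varies, the target $\bigoplus_{p=0}^r\Omega^{r-p}(Y_p)$ is the total complex of the double complex $C^{p,q}=\Omega^q(Y_p)$ whose vertical differential is the holomorphic de Rham differential $\d$ and whose horizontal differential is the simplicial differential $\check\delta=\sum_i(-1)^i(Y_\bullet f_p^i)^*$, the alternating sum of pullbacks along the face maps. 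First I would verify the chain-map property. Decomposing $\d=\d_{Y_\bullet}+(-1)^{|\xi|}\d_{\Delta^\bullet}$ as in the definition, fibre integration commutes with $\d_{Y_\bullet}$ by differentiation under the integral sign, reproducing the vertical differential; meanwhile integrating the $\d_{\Delta^\bullet}$-part over $|\Delta^p|$ produces, by Stokes' theorem, an integral over the boundary $\partial|\Delta^p|$. Identifying the facets of $\partial|\Delta^p|$ with $|\Delta^{p-1}|$ through the coface maps $f_p^i$ turns this boundary term into precisely the alternating sum $\check\delta$, and here the gluing condition \cref{equation:simplicial-gluing-condition-for-forms} is exactly what guarantees that these facet identifications are consistent. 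Tracking the Koszul signs then shows $\int_{\Delta^\bullet}$ intertwines $\d$ with $\check\delta\pm\d$ on the total complex.

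For the quasi-isomorphism I would exhibit a homotopy inverse built from Whitney elementary forms. On each geometric simplex $|\Delta^p|$ there is, for every face, a canonical smooth form assembled from the barycentric coordinates $t_i$ and the differentials $\d t_i$, normalised so that the top elementary form integrates to $1$ over $|\Delta^p|$. Pairing a Čech--de Rham cochain with the appropriate Whitney forms defines a map $E\colon\bigoplus_{p}\Omega^{r-p}(Y_p)\to\Omega^{r,\Delta}(Y_\bullet)$ whose image consists of genuine simplicial forms, the gluing condition holding because Whitney forms restrict compatibly along cofaces. One then checks that $\int_{\Delta^\bullet}\circ E=\id$ directly from the normalisation of the elementary forms, and that $E\circ\int_{\Delta^\bullet}$ is chain-homotopic to the identity through an explicit homotopy operator obtained by integrating along the radial dilation of each simplex (a prism, or cone, construction). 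The main obstacle is genuinely this last point: constructing the homotopy $s$ and verifying $\id-E\circ\int_{\Delta^\bullet}=\d s+s\d$ is a delicate, sign-laden computation, and it is precisely this part that I would attribute to Dupont \cite{Dupont1976} rather than reproduce in full.

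Finally, for the specialisation $Y_\bullet=\nerve{\bullet}$, I would simply unwind definitions. The Čech nerve has $\nerve{p}=\coprod_{\alpha_0,\dots,\alpha_p}U_{\alpha_0\cdots\alpha_p}$, so a holomorphic $(r-p)$-form on $\nerve{p}$ is the same datum as a Čech $p$-cochain valued in $\Omega_X^{r-p}$, that is, $\Omega^{r-p}(\nerve{p})\cong\cech^p(\Omega_X^{r-p})$. Summing over $i+j=r$ identifies $\bigoplus_{p=0}^r\Omega^{r-p}(\nerve{p})$ with the degree-$r$ part of the total complex of the Čech--de Rham bicomplex $\cech^p(\Omega_X^q)$; under this dictionary the simplicial differential $\check\delta$ becomes the ordinary Čech differential and the fibrewise $\d$ becomes the de Rham differential, yielding $\Tot^r\cech^\bullet(\Omega_X^\bullet)$. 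No new argument is needed here beyond this bookkeeping of indices and signs, so the general statement specialises immediately to the desired comparison.
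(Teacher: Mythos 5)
Your proposal is correct and, in essence, coincides with the paper's treatment: the paper states this lemma without proof, deferring entirely to Dupont \cite{Dupont1976} (as recalled in \cite[§2.3]{Hosgood2021}), and your sketch --- the chain-map property via Stokes' theorem on the simplex, the Whitney elementary forms as a homotopy inverse with the hard homotopy computation attributed to Dupont, and the identification $\Omega^{r-p}(\nerve{p})\cong\cech^p(\Omega_X^{r-p})$ coming from $\nerve{p}$ being a disjoint union of intersections --- is precisely the content of the argument that citation points to. The one subtlety you leave implicit is that Dupont's theorem is stated for smooth simplicial manifolds while here the forms are holomorphic along $Y_p$ and smooth along $\Delta^p$; since the Whitney map and the homotopy operator act only on the simplex coordinates, the argument carries over unchanged, which is exactly why the paper can simply cite it.
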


        \begin{remark}\label{remark:fibre-integration-only-depends-on-certain-parts}
            (\cite[p.~36]{Green1980}).
            Since the integral of a $k$-form over an $\ell$-dimensional manifold is only non-zero when $k=\ell$, we see that the fibre integral of some simplicial differential $r$-form $\omega_\bullet = \{\omega_p^{i,j}\}_{p\in\mathbb{N},i+j=r}$ is determined entirely by the type-$(r-p,p)$ parts on the $p$-simplices:
            \begin{equation*}
                \int_{\Delta^\bullet}\omega = \int_{\Delta^0}\omega_0^{r,0} + (-1)^{(r-1)}\int_{\Delta^1}\omega_1^{r-1,1} + \ldots + (-1)^{(r-p)(p)}\int_{\Delta^p}\omega_p^{r-p,p} + \ldots + \int_{\Delta^r}\omega_r^{0,r}
            \end{equation*}
            where the signs come from the fact that we work with $\nerve{\bullet}\times\Delta^\bullet$ instead of $\Delta^\bullet\times\nerve{\bullet}$.
        \end{remark}

        \begin{remark}
            There are further sign confusions that can arise from the choice of orientation on the simplices, but this will not concern us here.
            This is discussed in \cite{Hosgood2020}.
        \end{remark}

\section{Manual construction for global vector bundles}\label{section:manual-construction}

    Assume now that we have \emph{flat} holomorphic (local) connections $\nabla_\alpha$ on each $E\restricted U_\alpha$.

    \subsection{Truncated de~Rham cohomology}

        \begin{remark}\label{remark:we-can-calculate-in-cech}
            Since $\cover$ is Stein (and any finite intersection of these opens is also Stein) and the sheaves $\Omega_X^r$ are coherent, we can apply Cartan's Theorem B, which tells us that $\check{\mathbb{H}}^k(\cover,\Omega_X^\bullet) \cong \check{\mathbb{H}}^k(X,\Omega_X^\bullet)$.
            Since $X$ is paracompact and $\cover$ is Stein, we know that Čech cohomology computes hypercohomology, and so $\check{\mathbb{H}}^k(X,\Omega_X^\bullet) \cong \mathbb{H}^k(X,\Omega_X^\bullet)$.
            Finally, $\mathbb{H}^k(X,\Omega_X^\bullet) \cong \HH^k\left(X,\mathbb{C}\right)$ (by e.g. \cite[Theorem~8.1]{Voisin2002a}).
            In summary, we are in a nice enough setting that Čech-de~Rham bicomplex lets us calculate singular cohomology:
            \begin{equation*}
                \HH^r\Tot^\bullet\cech^\anotherbullet\left(\Omega^\anotherbullet_X\right) \cong \HH^r\left(X,\mathbb{C}\right).
            \end{equation*}
        \end{remark}

        \begin{definition}
            Given the de~Rham complex $\Omega_X^\bullet$, we define the \define{$k$-th Hodge complex} $\Omega_X^{\bullet\geqslant k}$ as the truncation
            \begin{equation*}
                \Omega_X^{\bullet\geqslant k} = \left(\Omega_X^k\xrightarrow{\d}\Omega_X^{k+1}\xrightarrow{\d}\ldots\right)[-k]
            \end{equation*}
            i.e. where $\Omega_X^k$ is in degree $k$.
        \end{definition}

        \begin{definition}
            We define the \define{$k$-th truncated de~Rham cohomology} (or \define{tDR} cohomology) to be $\HH_\text{tDR}^k(X)=\mathbb{H}^k(X,\Omega_X^{\bullet\geqslant k})$.
        \end{definition}

        \begin{remark}\label{remark:when-can-we-refine-to-hodge}
            If we have some \emph{closed} class $c=(c_0,\ldots,c_{2k})\in\Tot^{2k}\cech^\bullet(\Omega^\bullet_X)$, where $c_i\in\cech^i(\Omega_X^{2k-i})$, \emph{such that} $c_i=0$ for $i\geqslant k+1$, then we can refine the corresponding class in singular cohomology $[c]\in\HH^{2k}(X,\mathbb{C})$ to a class $[c]\in\HH_\text{tDR}^{2k}(X)$ in tDR cohomology.
        \end{remark}

        \begin{remark}\label{remark:how-to-manually-lift}
            Recalling \cref{remark:we-can-calculate-in-cech}, we know that we can use the Čech complex to calculate singular cohomology.
            Now, say we are given some $c_k\in\cech^k(\Omega_X^k)$ with $\check{\delta}c_k=0$ but $\d c_k\neq0$.
            \emph{If} we can find $c_i\in\cech^{i}(\Omega^{2k-i}_X)$ for $i=1,\ldots,(k-1)$ such that $\check\delta c_{i-1}=\d c_i$, and define $c_0=0\in\cech^0(\Omega_X^{2k})$, then\footnote{The signs depend on the parity of $k$.}
            \begin{equation*}
                (0,\pm c_1,\ldots,\pm c_{k-1},c_k,0,\ldots,0) \in \Tot^{2k}\cech^\bullet\left(\Omega^\bullet_X\right)
            \end{equation*}
            is $(\check\delta+(-1)^k\d)$-closed, and thus represents a cohomology class in $\HH^{2k}\Tot^\bullet\cech^\anotherbullet(\Omega_X^\anotherbullet)$, and thus a cohomology class in $\HH^{2k}(X,\mathbb{C})$.

            In essence, given some ``starting element'' in the Čech-de~Rham bicomplex, we can try to manually lift it to some \emph{closed} element of the same total degree.
        \end{remark}

        \begin{remark}
            A few important notes before we continue.
            \begin{itemize}
                \item Although we have the isomorphism $H^r\Tot^\bullet\cech^\anotherbullet(\Omega^\anotherbullet_X) \cong \HH^r(X,\mathbb{C})$, we don't necessarily have an easy way of computing explicitly what a closed class in the Čech-de~Rham complex maps to under this isomorphism, \emph{unless} it has a non-zero degree-$(0,r)$ component, in which case it maps to exactly that component.
                \item We will construct classes in singular cohomology, but note that they can actually all be considered as living in the corresponding tDR cohomology, thanks to \cref{remark:when-can-we-refine-to-hodge}.
                \item The assumption that the local connections $\nabla_\alpha$ are \emph{flat} is not necessary for the \emph{abstract} theory, but essential for these \emph{explicit} calculations.
                \item We point out, once more, that the constructions given in this chapter are for the \emph{exponential} Atiyah classes.
            \end{itemize}
        \end{remark}

    \subsection{The first Atiyah class}\label{subsection:first-manual-atiyah-class}

        We wish to calculate $\omega_{\alpha\beta}$ (\cref{definition:omega-alpha-beta}), and we can use the fact that our local sections $\{s_1^\alpha,\ldots,s_\mathfrak{r}^\alpha\}$ are $\nabla_\alpha$-flat:
        \[
            \omega_{\alpha\beta} \left(s_k^\alpha\right)
            =
            \left(\nabla_\beta-\nabla_\alpha\right) \left(s_k^\alpha\right)
            =
            \nabla_\beta \left(s_k^\alpha\right)
        \]
        (where we omit the restriction notation on the local connections).
        But then, using \cref{equation:transition-maps} followed by the Leibniz rule,
        \begin{align*}
            \nabla_\beta\left(s^\alpha_k\right) &= \nabla_\beta\left(\sum_\ell(M_{\alpha\beta})_k^\ell s^\beta_\ell\right)\\
            &=\sum_\ell\left[\nabla_\beta\left(s^\beta_\ell\right)\wedge(M_{\alpha\beta})^\ell_k+s^\beta_\ell\otimes\d(M_{\alpha\beta})^\ell_k\right].
        \end{align*}
        Since the $s^\beta_\ell$ are $\nabla_\beta$-flat, the first part of each $\ell$-term is zero, and then, using the inverse of \cref{equation:transition-maps},
        \begin{align*}
            \omega_{\alpha\beta}\left(s^\alpha_k\right) &= \sum_\ell\left[\left(\sum_m(M_{\alpha\beta}^{-1})^m_\ell s^\alpha_m\right)\otimes\d(M_{\alpha\beta})^\ell_k\right]\\
            &=\sum_m s^\alpha_m\otimes(M_{\alpha\beta}^{-1}\d M_{\alpha\beta})^m_k
        \end{align*}
        where we can move the $M_{\alpha\beta}^{-1}$ across the tensor product because the tensor is over $\OO_X$, and the $M_\ell^m$ are exactly elements of this ring.
        Thus, in the $U_\alpha$ trivialisation,
        \begin{equation}
        \label{equation:omega-alpha-beta-explicitly}
            \omega_{\alpha\beta}
            =
            M_{\alpha\beta}^{-1}\d M_{\alpha\beta}.
        \end{equation}

        \begin{remark}
            This $\omega_{\alpha\beta}$ is exactly the first Chern class $\d\log M_{\alpha\beta}$ of the bundle.
        \end{remark}

        \begin{lemma}\label{lemma:d-omega-is-minus-omega-squared}
            $\d\omega_{\alpha\beta}=-\omega_{\alpha\beta}^2$.
            \begin{proof}
                Using the fact that $\d \left(A^{-1}\right)=-A^{-1}\cdot\d A\cdot A^{-1}$, we see that
                \begin{align*}
                    \d\omega_{\alpha\beta} &= \d\left(M_{\alpha\beta}^{-1}\d M_{\alpha\beta}\right)
                \\  &= \d \left(M_{\alpha\beta}^{-1}\right)\d M_{\alpha\beta}
                \\  &= -M_{\alpha\beta}^{-1}\d M_{\alpha\beta}M_{\alpha\beta}^{-1}\d M_{\alpha\beta}
                \\  &= -\left(M_{\alpha\beta}^{-1}\d M_{\alpha\beta}\right)^2
                \\  &= -\omega_{\alpha\beta}^2.\qedhere
                \end{align*}
            \end{proof}
        \end{lemma}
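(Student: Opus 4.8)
The plan is to differentiate the explicit expression $\omega_{\alpha\beta} = M_{\alpha\beta}^{-1}\d M_{\alpha\beta}$ supplied by \cref{equation:omega-alpha-beta-explicitly}, treating $M_{\alpha\beta}$ as a matrix of holomorphic functions (that is, $0$-forms) and $\d M_{\alpha\beta}$ as a matrix of $1$-forms, with the ``$\cdot$'' combining matrix multiplication and the wedge product of forms as fixed in the conventions above. Writing $M = M_{\alpha\beta}$ to lighten notation, I would first apply the Leibniz rule to the product $M^{-1}\cdot\d M$, obtaining
\[
    \d\omega_{\alpha\beta} = \d\!\left(M^{-1}\right)\d M + M^{-1}\,\d(\d M).
\]

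Next I would discard the second summand, since $\d\circ\d = 0$ forces $\d(\d M) = 0$. This leaves only $\d(M^{-1})\,\d M$, and here the one genuine input is the formula $\d(M^{-1}) = -M^{-1}\,\d M\,M^{-1}$, which follows by differentiating the identity $M M^{-1} = I$ and rearranging. Substituting it in yields
\[
    \d\omega_{\alpha\beta} = -M^{-1}\,\d M\,M^{-1}\,\d M = -\left(M^{-1}\,\d M\right)^2 = -\omega_{\alpha\beta}^2,
\]
which is exactly the claim.

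There is no serious obstacle here: the computation is a direct manipulation, and the author indeed states the derivative-of-inverse formula as a known fact. The only points demanding care are bookkeeping ones — confirming that the Koszul sign convention introduces no extra sign when the differential passes the $0$-form–valued factor $M^{-1}$ (so the Leibniz rule splits cleanly as written), and invoking the associativity noted in \cref{remark:order-of-base-change-and-composition} to regroup $M^{-1}\!\left(\d M\, M^{-1}\right)\d M$ as $\left(M^{-1}\d M\right)\!\left(M^{-1}\d M\right)$, which is legitimate precisely because the intervening $M^{-1}$ is a matrix of $0$-forms.
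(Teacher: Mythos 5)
Your proposal is correct and follows essentially the same route as the paper's own proof: differentiate $\omega_{\alpha\beta}=M_{\alpha\beta}^{-1}\d M_{\alpha\beta}$ via the Leibniz rule, kill the $\d(\d M_{\alpha\beta})$ term by $\d^2=0$, and substitute the derivative-of-inverse formula $\d(M^{-1})=-M^{-1}\,\d M\,M^{-1}$. The only difference is that you make explicit the Leibniz step and the regrouping of $0$-form factors that the paper leaves implicit, which is harmless.
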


        \begin{lemma}\label{lemma:d-tr-omega-is-zero}
            $\d\tr\omega_{\alpha\beta}=0$.
            \begin{proof}
                Since $\d\tr\omega_{\alpha\beta}=\tr\d\omega_{\alpha\beta}=-\tr\omega_{\alpha\beta}^2$, by \cref{lemma:d-omega-is-minus-omega-squared}, this lemma is a specific case of the fact\footnote{Using the skew-symmetry of forms, and writing matrix multiplication as a sum, we can show that we can cyclically permute matrices of forms inside a trace, up to a sign ($\tr A_1 A_2\cdots A_k=(-1)^{k-1}\tr A_2\cdots A_k A_1$); this fact is then a corollary, since $\tr(A^{2k})=-\tr(A^{2k})$.} that $\tr(A^{2k})=0$ for any $k\in\mathbb{N}$.
            \end{proof}
        \end{lemma}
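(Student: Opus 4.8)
The plan is to reduce the statement to a purely algebraic fact about the trace of a matrix of $1$-forms. First I would commute the differential past the trace: since the trace is just a sum of matrix entries and $\d$ is linear, $\d\tr\omega_{\alpha\beta}=\tr\d\omega_{\alpha\beta}$. Then I would invoke \cref{lemma:d-omega-is-minus-omega-squared} to replace $\d\omega_{\alpha\beta}$ with $-\omega_{\alpha\beta}^2$, so that the claim becomes equivalent to showing $\tr(\omega_{\alpha\beta}^2)=0$.

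The remaining work is to prove that $\tr(A^2)=0$ for any $(\mathfrak{r}\times\mathfrak{r})$-matrix $A$ whose entries are $1$-forms. Writing this out, $\tr(A^2)=\sum_{i,j}A_{ij}A_{ji}$, and the anticommutativity of $1$-forms under the wedge product gives $A_{ij}A_{ji}=-A_{ji}A_{ij}$. Relabelling the summation indices $i\leftrightarrow j$ in the resulting sum then yields $\tr(A^2)=-\tr(A^2)$, forcing $\tr(A^2)=0$. Taking $A=\omega_{\alpha\beta}$ completes the argument.

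The only genuinely delicate step is the sign bookkeeping in this last computation, which is really a special case of the graded-cyclic property of the trace: for matrices $A_1,\ldots,A_k$ of $1$-forms one has $\tr(A_1\cdots A_k)=(-1)^{k-1}\tr(A_2\cdots A_k A_1)$. Setting all the $A_i$ equal to a common $A$ and taking $k$ even gives $\tr(A^k)=-\tr(A^k)$, and hence $\tr(A^{2k})=0$ in general; the case $k=2$ is all that is needed here. I do not anticipate any obstacle beyond keeping track of these signs, since everything else is just linearity of $\d$ and $\tr$ together with \cref{lemma:d-omega-is-minus-omega-squared}.
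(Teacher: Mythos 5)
Your proposal is correct and follows essentially the same route as the paper: commute $\d$ past $\tr$, apply \cref{lemma:d-omega-is-minus-omega-squared} to reduce to $\tr(\omega_{\alpha\beta}^2)=0$, and conclude via the graded skew-symmetry of the trace on matrices of $1$-forms. Your explicit index computation $\sum_{i,j}A_{ij}A_{ji}=-\sum_{i,j}A_{ji}A_{ij}$ is just the unwound special case of the graded-cyclic property $\tr(A_1\cdots A_k)=(-1)^{k-1}\tr(A_2\cdots A_kA_1)$ that the paper invokes in its footnote.
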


        By \cref{lemma:d-tr-omega-is-zero}, and the fact that $\omega_{\alpha\beta}$ is a Čech cocycle by definition (\cref{lemma:cech-cocycle-of-atiyah-class}), we can draw the closed element $\at_E$ in the Čech-de~Rham bicomplex as follows:
        \begin{equation}
            \begin{tikzcd}[row sep=large]
                0&\\
                \underbrace{\tr(\omega_{\alpha\beta})}_{\tr\at_E}
                    \ar[u,"\mathrm{d}"]
                    \ar[r,"\check\delta"]
                & 0
            \end{tikzcd}
        \end{equation}

    \subsection{The second Atiyah class}\label{subsection:second-manual-atiyah-class}

        By \cref{definition:second-exponential-atiyah-class}, we know that
        \begin{equation*}
            \expat{2}_E = \left\{\omega_{\alpha\beta}M_{\alpha\beta}\omega_{\beta\gamma}M_{\alpha\beta}^{-1}\right\}_{\alpha,\beta}
        \end{equation*}
        and we introduce the notation
        \begin{align*}
                A &= \omega_{\alpha\beta}
                \qquad
                M = M_{\alpha\beta}
            \\  B &= \omega_{\alpha\gamma}
                \qquad
                X = M\omega_{\beta\gamma}M^{-1}
        \end{align*}
        so that $\expat{2}_E=AX$, and everything can be thought of as living over $U_\alpha$.

        By \cref{lemma:d-omega-is-minus-omega-squared}, we know that $\mathrm{d}A = -A^2$, and similarly for $B$ and $X$.
        Further, by differentiating the cocycle condition $M_{\alpha\beta}M_{\beta\gamma} = M_{\alpha\gamma}$ of the transition maps, and then right-multiplying by $M_{\alpha\gamma}^{-1}$, we see that\footnote{As previously mentioned, $M\omega_{\beta\gamma}M^{-1}$ is the natural way of thinking of $X$ as being a map \textit{into} something lying over $U_\alpha$, so this equation should be read as a cocycle condition over $U_\alpha$ by thinking of it as $\omega_{\alpha\beta}+\widetilde{\omega}_{\beta\gamma}=\omega_{\alpha\gamma}$, where the tilde corresponds to a base change. Note that this is also the result we expect, since $\omega_{\alpha\beta}$ corresponds to $\nabla_\beta-\nabla_\alpha$, and (as we have already noted) this clearly satisfies the additive cocycle condition.} \mbox{$\tr(A+X)=\tr(B)$}.
        Hence
        \begin{equation*}
            \tr\expat{2}_E = \tr\left(A(B-A)\right).
        \end{equation*}
        Using the fact that $\d A=A^2$ we see that $\d A^2=0$, whence
        \[
            \d\tr\left(\expat{2}_E\right) = \d\tr\left(AB-A^2\right) = -\tr\left(A^2B-AB^2\right).
        \]

        Recalling \cref{remark:how-to-manually-lift}, we want $f\in\cech^1(\Omega_X^3)$ such that $\delta f=-\d\tr(\expat{2}_E)$ and $\d f=0$.
        It is necessary, but not sufficient, that $f$ be the trace of a polynomial of homogeneous degree $3$ in the one variable $A=\omega_{\alpha\beta}$.
        But then $f(A)=\tr(A^3)$ is, up to a scalar multiple, our only option.
        We set $f(A)=\frac13\tr(A^3)$ and compute its Čech coboundary:
        \begin{align*}
            (\delta f)_{\alpha\beta\gamma} &= f(\omega_{\beta\gamma})-f(\omega_{\alpha\gamma})+f(\omega_{\alpha\beta})\\
            &= f(B-A)-f(B)+f(A)\\
            &= \frac13\tr\big((B-A)^3-B^3+A^3\big)\\
            &= \frac13\tr\left(-AB^2 -BAB - B^2A + A^2B + ABA + BA^2\right).
        \end{align*}
        Using the aforementioned (\cref{lemma:d-tr-omega-is-zero}) fact that we can cyclically permute, up to a sign, inside the trace, we see that
        \begin{align*}
            \frac13\tr\left(-AB^2-AB^2-AB^2+A^2B+A^2B+A^2B\right)
            &= \tr\left(A^2B - AB^2\right)\\
            &= -\d\tr\left(\expat{2}_E\right).
        \end{align*}
        Now we just have to worry about whether or not $\d f$ is zero.
        But
        \begin{align*}
            \d f(A) &= \frac13\d\tr(A^3)
        \\  &= \frac13\tr(\d A A^2-A\d A^2)
            = -\frac13\tr(A^4),
        \end{align*}
        and we know, as in \cref{lemma:d-tr-omega-is-zero}, that this is indeed zero.

        This calculation can be summarised by the following diagram:
        \begin{equation}
        \label{equation:second-atiyah-class-summary}
            \begin{tikzcd}[row sep=large]
                0&&\\
                -\frac13\tr\left(A^3\right)
                    \ar[u,"\mathrm{d}"]
                    \ar[r,"\check\delta"]
                &\tr(A(B-A)B)
                &\\
                &\underbrace{\tr\big(A(B-A)\big)}_{\tr\expat{2}_E}
                    \ar[u,"\mathrm{d}"]
                    \ar[r,"\check\delta"]
                & 0
            \end{tikzcd}
        \end{equation}
        which gives us the closed\footnote{See \cref{remark:double-checking-at2-is-closed}.} element
        \begin{equation}
            \left(
                0\,,\,\,
                \frac13\tr(A^3)\,,\,\,
                \tr\big(AX\big)\,,\,\,
                0\,,\,\,
                0
            \right)\in\Tot^4\cech^\bullet\big(\Omega_X^\anotherbullet\big)
        \end{equation}

        \begin{remark}\label{remark:double-checking-at2-is-closed}
            We know that $\tr\expat{2}_E$ is a cocycle by definition, but we can still manually show that it is Čech closed:
            \begin{align*}
                \check\delta\tr\big(\omega_{\alpha\beta}(\omega_{\alpha\gamma}-\omega_{\alpha\beta})\big)
                &= \check\delta\tr(\omega_{\alpha\beta}\omega_{\alpha\gamma})
            \\  &= \tr(\omega_{\beta\gamma}\omega_{\beta\delta}) - \tr(\omega_{\alpha\gamma}\omega_{\alpha\delta}) + \tr(\omega_{\alpha\beta}\omega_{\alpha\delta}) - \tr(\omega_{\alpha\beta}\omega_{\alpha\gamma})
            \\  &= \tr\big((\omega_{\alpha\gamma}-\omega_{\alpha\beta})(\omega_{\alpha\delta}-\omega_{\alpha\beta})\big) - \tr(\omega_{\alpha\gamma}\omega_{\alpha\delta})
            \\  &\qquad + \tr(\omega_{\alpha\beta}\omega_{\alpha\delta}) - \tr(\omega_{\alpha\beta}\omega_{\alpha\gamma})
            \\  &= -\tr(\omega_{\alpha\gamma}\omega_{\alpha\beta}) + \tr(\omega_{\alpha\beta}^2) - \tr(\omega_{\alpha\beta}\omega_{\alpha\gamma})
            \\  &= \tr(\omega_{\alpha\beta}\omega_{\alpha\gamma}) - \tr(\omega_{\alpha\beta}\omega_{\alpha\gamma})
            \\  &= 0.
            \end{align*}
        \end{remark}

    \subsection{The third Atiyah class}\label{subsection:third-manual-atiyah-class}

        We extend our previous notation, writing
        \begin{align*}
            &A=\omega_{\alpha\beta} \qquad M=M_{\alpha\beta} \qquad X=M\omega_{\beta\gamma}M^{-1}\\
            &B=\omega_{\alpha\gamma} \qquad N=M_{\alpha\gamma} \qquad Y=N\omega_{\gamma\delta}N^{-1}\\
            &C=\omega_{\alpha\delta}
        \end{align*}
        so that $\expat{3}_E = AXY = A(B-A)(C-B)$.

        It is then relatively simple to calculate that
        \[
            \d\tr\expat{3}_E = -\tr\big(A(B-A)(C-B)C\big) \in \cech^3\left(\Omega^4_X\right).
        \]
        Trying to find some $\varphi\in\cech^2(\Omega^4_X)$ such that $\check\delta\varphi = \d\tr\expat{3}_E$, however, is slightly harder.
        The most naive approach is to list all the monomials in $\cech^2(\Omega^4_X)$, apply the Čech differential to each one, and then equate coefficients.
        Using the fact that (up to a sign) we can cyclically permute under the trace, finding all the monomials is the same as finding all degree-2 monomials in non-commutative variables $X$ and $Y$, modulo equivalence under cyclic permutation, and there are just four of these: $X^2Y^2$, $(XY)^2$, $X^3Y$, and $XY^3$.
        By calculation,
        \[
            \varphi
            =
            \frac{-1}{4}\tr\Big(\big(A(B-A)\big)^2\Big)
            + \frac12 \tr(A^2(B-A)^2)
            + \frac{-1}{2}\tr\big(A^3(B-A)\big)
            + \frac{-1}{2}\tr\big(A(B-A)^3\big)
        \]
        is exactly such that $\check\delta\varphi = \d\tr\expat{3}_E$.
        Factoring $\d\varphi$, we see that
        \[
            \d\varphi = \frac{1}{10}\tr\big((B-A)^5-B^5+A^5\big) = \check\delta\frac{1}{10}\tr(A^5).
        \]

        This calculation can be summarised in the following diagram:
        \begin{equation}
            \begin{tikzcd}[row sep=huge,column sep=small]
                0\\
                \frac{1}{10}\tr(A^5)
                    \ar{u}{\mathrm{d}} 
                    \ar{r}{\check\delta}
                & 
                \frac{1}{10}\tr\big((B-A)^5 - B^5 + A^5\big)\\
                & -\frac14\tr\Big(\big(A(B-A)\big)^2\Big) + \frac12 \tr\big(A^2(B-A)^2\big)
                    \ar{u}{\mathrm{d}}
                &\\[-4em]
                & - \frac12\tr\big(A^3(B-A)\big) - \frac12\tr\big(A(B-A)^3\big)
                    \ar{r}{\check\delta}
                & -\tr\big(A(B-A)(C-B)C\big) \\
                && \underbrace{\tr\big(A(B-A)(C-B)\big)}_{\tr\expat{3}_E}
                    \ar{u}{\mathrm{d}}
                    \ar{r}{\check\delta}
                & 0
            \end{tikzcd}
        \end{equation}
        Taking the signs of the total differential into account, this gives us the closed element
        \begin{equation*}
            \left(
                0\,,\,\,
                \frac{-1}{10}\tr\big(A^5\big)\,,\,\,
                \rho(A,X)\,,\,\,
                \tr(AXY)\,,\,\,
                0\,,\,\,
                0\,,\,\,
                0
            \right)\in\Tot^6\cech^\bullet\big(\Omega_X^\anotherbullet\big)
        \end{equation*}
        where
        \begin{equation*}
            \rho(A,X) = \frac{-1}{4}\tr\big(AXAX\big) + \frac12 \tr\big(A^2X^2\big) + \frac{-1}{2}\tr\big(A^3X\big) + \frac{-1}{2}\tr\big(AX^3\big).
        \end{equation*}

    \subsection{The fourth Atiyah class and beyond}

        Looking at the first three Atiyah classes, there are some evident patterns: for example, the Čech $1$-cocycle always seems to be some multiple of $\tr(A^{2k-1})$.
        Beyond some vague pattern recognition, however, it becomes increasingly hard to work with the $k$-th Atiyah class for $k\geqslant4$, mainly due to the cumbersome number of monomials whose coefficients we have to equate.
        It seems likely that there are two patterns --- one for $k$ odd, and one for $k$ even --- but even this is hard to verify, given that $k=0$ and $k=1$ are both rather trivial, and $k\geqslant4$ is so unwieldy that it becomes harder to recognise any patterns ``by hand''.\footnote{Thanks to a comment by Mahmoud Zeinalian, we see that the coefficients $1$, $\frac13$, $\frac{1}{10}$, $\frac{1}{35}$, etc. of the degree $(1,p-1)$ terms (i.e. those that are simply the traces of odd powers of the Maurer-Cartan form $A = g^{-1}\mathrm{d}g$) are given by \[\frac{n!(n+1)!}{(2n+1)!} = \frac{\Gamma(n+1)\Gamma(n+2)}{\Gamma(2n+2)} = (2n+2)\mathrm{B}(n+1,n+2),\] where $\mathrm{B}(x,y)$ is the beta function (or Euler integral of the first kind). It seems possible that the other coefficients could be expressed in terms of ``iterated beta functions'', using the formulas coming from the fibre integration approach we study in \cref{section:simplicial-construction-for-vector-bundles}.}
        For the sake of completeness, we give below the lift of $\expat{4}_E$, calculated by a basic Haskell implementation of the naive method used so far: calculating all non-commutative monomials of a certain degree, applying the Čech differential, and then equating coefficients.

        The lift of $\expat{4}_E$ in the total complex is
        \begin{equation*}
            \left(
                0\,,\,\,
                \frac{-1}{35}\tr P^{(1,7)}\,,\,\,
                \frac15\tr P^{(2,6)}\,,\,\,
                \frac15\tr P^{(3,5)}\,,\,\,
                \tr P^{(4,4)}\,,\,\,
                0\,,\,\,
                0\,,\,\,
                0\,,\,\,
                0
            \right) \in \Tot^8\cech^\bullet\big(\Omega^\anotherbullet_X\big)
        \end{equation*}
        where
        \begin{align*}
            P^{(4,4)}
            &= A(B-A)(C-B)(D-C)
        \\  P^{(3,5)}
            &=
            \frac{13}{5} A^5
            +13 A^4(B-A)
            +5 A^3(B-A)^2
            +5 A^3(B-A)(C-A)
        \\  &\quad+3 A^3(C-A)(B-A)
            +4 A^2(B-A)A(B-A)
            +4 A^2(B-A)A(C-A)
        \\  &\quad+3 A^2(B-A)^3
            - A^2(B-A)^2(C-A)
            +5A^2(B-A)(C-A)^2
        \\  &\quad+5A^2(C-A)A(B-A)
            +2 A^2(C-A)(B-A)^2
            + A^2(C-A)(B-A)(C-A)
        \\  &\quad+3 A^2(C-A)^2(B-A)
            - A(B-A)A(C-A)(B-A)
            +5 A(B-A)A(C-A)^2
        \\  &\quad-5 A(B-A)^2(C-A)(B-A)
            +5 A(B-A)(C-A)A(C-A)
            +5 A(B-A)(C-A)^3
        \\  &\quad+4 \big(A(C-A)\big)^2(B-A)
            -2 A(C-A)(B-A)^3
            +4 A(C-A)(B-A)^2(C-A)
        \\  &\quad+ A\big((C-A)(B-A)\big)^2
            +2 A(C-A)^2(B-A)^2
            + A(C-A)^2(B-A)(C-A)
        \\  &\quad+3 A(C-A)^3(B-A)
        \\  P^{(2,6)}
            &=
            5 A^5(B-A)
            -4 A^4(B-A)^2
            + A^3(B-A)A(B-A)
            + A^3(B-A)^3
        \\  &\quad-5 A^2(B-A)A(B-A)^2
            -4 A^2(B-A)^2A(B-A)
            -4 A^2(B-A)^4
        \\  &\quad+\frac{1}{3} \big(A(B-A)\big)^3
            + A(B-A)A(B-A)^3
            + A(B-A)^5
        \\  P^{(1,7)}
            &= A^7
        \end{align*}

        The moral of the above calculation is the following: we need to find a better way of doing this.

\section{Characteristic classes via admissible simplicial connections}\label{section:characteristic-classes-via-admissible-simplicial-connections}

    \begin{remark}\label{remark:the-motivating-example-recalled}
        We recall, from \cite[§4.2]{Hosgood2021}, the motivating example for the definition of {admissibility}.

        Write $\mathcal{C}$ to mean the category whose objects are pairs $(V,\varphi)$ of finite-dimensional vector spaces $V$ along with an endomorphism $\varphi$, and whose morphisms $f\colon(V,\varphi)\to(W,\psi)$ are the morphisms $f\colon V\to W$ such that $f\circ\varphi=\psi\circ f$.
        We say that $(V,\varphi)$ is \define{flat} if $\varphi=0$.
        Then we consider the Grothendieck group $K(\mathcal{C})$, and introduce the equivalence relation $[(V,0)]\sim[(0,0)]$ to obtain $K(\mathcal{C})/\!\!\sim$.
        Letting $E\colon\mathcal{C}\to\mathcal{C}$ be the endofunctor that sends $(V,\varphi)$ to $(V/\Ker\varphi,0)$, we write $\LL_E\mathcal{C}$ to mean the localisation of $\mathcal{C}$ at all morphisms that become isomorphisms after applying $E$ (the wide subcategory of which we denote by $\mathcal{W}$).

        Then a morphism $f\colon(V,\varphi)\to(W,\psi)$ is in $\mathcal{W}$ if and only if it is \define{admissible}: if and only if there exist sub-spaces $V_1\hookrightarrow V$ and $W_1\hookrightarrow W$ such that
            \begin{enumerate}
                \item $V_1\subseteq\Ker\varphi$; $W_1\subseteq\Ker\psi$;
                \item $f$ restricts to a morphism $V_1\to W_1$; and
                \item $f$ descends to an isomorphism $V/V_1\congto W/W_1$.
            \end{enumerate}
        In fact, there is a canonical isomorphism $K(\LL_E\mathcal{C})\congto K(\mathcal{C})/\!\!\sim$.
    \end{remark}

    \subsection{Generalised invariant polynomials}

        \begin{definition}\label{definition:generalised-invariant-polynomial}
            Using the notation of \cref{remark:the-motivating-example-recalled}, a sequence $(P_k)_{k\in\mathbb{N}}$ of $\mathbb{C}$-valued polynomials $P_k$ on $\mathcal{C}^{\otimes n}$ is said to be a \define{generalised invariant polynomial of degree $n$} if the following conditions are satisfied:
            \begin{enumerate}[(i)]
                \item each $P_k$ is a $\GL_k$-invariant $\mathbb{C}$-valued $\oplus$-additive polynomial of degree $n$ on
                \[
                    \left\{\bigotimes_{i=1}^n(V_i,\varphi_i)\in\mathcal{C}^{\otimes n} \Bigm\vert \dim V_i=k\right\}
                    \subset \mathcal{C}^{\otimes n};
                \]
                \item the $P_k$ each factor through $K(\mathcal{C})/\!\sim$.
            \end{enumerate}
            The second condition is equivalent to the following: the $P_k$ all satisfy the \define{extension by zero} property: $P_k=P_{k+1}\circ\iota_k$ for all $k\in\mathbb{N}$, where $\iota_k$ is the linear embedding $\mathbb{C}^k\hookrightarrow\mathbb{C}^{k+1}$ corresponding to $v\mapsto\smqty(v\\0)$.
            This extension by zero property, when combined with additivity, tells us that $P$ is \define{fully additive}:
            \[
                P_{\dim(V+W)}\big((V,\varphi)\oplus(W,\psi)\big) = P_{\dim V}\big((V,\varphi)\big) + P_{\dim W}\big((W,\psi)\big).
            \]

            Mirroring classical notation, given some \mbox{$P_\bullet=(P_k)_{k\in\mathbb{N}}$} on $\mathcal{C}^{\otimes n}$, we write $\widetilde{P_\bullet}$ to mean the sequence of invariant and additive polynomials on $\mathcal{C}$ given by
            \[
                \widetilde{P}_{\dim V}\big((V,\varphi)\big) = P_{\dim V}\left((V,\varphi)^{\otimes n}\right).
            \]
        \end{definition}

        \begin{example}
            The prototypical example of a generalised invariant polynomial is the degree-$1$ polynomial given by $P_k=\tr$ for all $r\in\mathbb{N}$, where the trace is of the endomorphism part of a pair;
            or, more generally, the degree $n$ polynomial given by $P_k^n=\tr\circ\mu^n$, where $\mu^n$ is the multiplication map that sends an $n$-fold tensor product of endomorphisms to the endomorphism given by the composition of all the endomorphisms.\footnote{The fact that this is well defined relies on being able to view all the $V_i$ in the tensor product as being identical, but we can do this thanks to the $\GL$-invariance of the trace, as well as the extension by zero property.}
        \end{example}

        \begin{lemma}\label{lemma:generalised-invariant-polynomials-are-admissible-invariant}
            Using the notation from \cref{remark:the-motivating-example-recalled}, let $f\colon(V,\varphi)\to(W,\psi)$ be an admissible morphism, and $(P_k)_{k\in\mathbb{N}}$ a generalised invariant polynomial of degree $n$ that is zero on an $n$-fold tensor product of flat objects.
            Then $\widetilde{P_k}\big((V,\varphi)\big)=\widetilde{P_k}\big((W,\psi)\big)$ for all $k\in\mathbb{N}$.
            \begin{proof}
                By definition, there exist decompositions $V\cong V_1\oplus V_2$ and $W\cong W_1\oplus W_2$ such that $\varphi(V_1)=\psi(W_1)=0$ and $f$ restricts to an isomorphism $V/V_1\congto W/W_1$.
                Then, necessarily, $\dim V_2=\dim W_2=s$ for some $s\in\mathbb{N}$.
                Since $P_k$ is fully additive,
                \begin{equation*}
                    \widetilde{P}_{\dim V}\big((V,\varphi)\big) = \widetilde{P}_{\dim V}\big((V_1,0)\oplus(V_2,\varphi\restricted V_2)\big) = \widetilde{P}_{\dim V_1}\big((V_1,0)\big) + \widetilde{P}_{s}\big((V_2,\varphi\restricted V_2)\big).
                \end{equation*}
                But $P_k$ is assumed to be zero on an $n$-fold tensor product of flat objects, and so
                \begin{equation*}
                    \widetilde{P}_{\dim V_1}\big((V_1,0)\big) + \widetilde{P}_{s}\big((V_2,\varphi\restricted V_2)\big) = \widetilde{P}_{s}\big((V_2,\varphi\restricted V_2)\big).
                \end{equation*}
                Then $\GL_k$-invariance tells us that
                \begin{equation*}
                    \widetilde{P}_{s}\big((V_2,\varphi\restricted V_2)\big) = \widetilde{P}_{s}\big((W_2,\varphi\restricted W_2)\big),
                \end{equation*}
                and this is equal to $\widetilde{P}_{\dim W}\big((W,\psi)\big)$, again by full additivity and being zero on flat objects.
            \end{proof}
        \end{lemma}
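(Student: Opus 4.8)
The plan is to use admissibility to present both $(V,\varphi)$ and $(W,\psi)$ as extensions of a common ``interesting'' quotient by a flat ``kernel'' piece, and then to argue that $\widetilde{P}$ only sees the interesting quotient. Throughout I suppress the dimension subscript on $\widetilde{P}$, writing $\widetilde{P}(V,\varphi)$ for $\widetilde{P}_{\dim V}\big((V,\varphi)\big)$.

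First I would unpack admissibility (\cref{remark:the-motivating-example-recalled}) to obtain subspaces $V_1\subseteq\Ker\varphi$ and $W_1\subseteq\Ker\psi$ with $f(V_1)\subseteq W_1$ such that $f$ descends to an isomorphism $\bar f\colon V/V_1\congto W/W_1$. The crucial observation is that $V_1\subseteq\Ker\varphi$ forces $V_1$ to be $\varphi$-stable (as $\varphi(V_1)=0\subseteq V_1$), so $\varphi$ induces an endomorphism $\bar\varphi$ on $V/V_1$ and we get a short exact sequence $0\to(V_1,0)\to(V,\varphi)\to(V/V_1,\bar\varphi)\to0$ in $\mathcal{C}$, and likewise for $W$. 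Since $f$ is a morphism in $\mathcal{C}$, the relation $f\varphi=\psi f$ descends to the quotients, so $\bar f$ is an isomorphism $(V/V_1,\bar\varphi)\congto(W/W_1,\bar\psi)$ in $\mathcal{C}$.

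Given this filtration, the argument is three moves. First, additivity of $\widetilde{P}$ over the short exact sequence gives $\widetilde{P}(V,\varphi)=\widetilde{P}(V_1,0)+\widetilde{P}(V/V_1,\bar\varphi)$, and similarly on the $W$ side. Second, $\widetilde{P}(V_1,0)=P_{\dim V_1}\big((V_1,0)^{\otimes n}\big)$ is the value of $P$ on an $n$-fold tensor product of flat objects, so the hypothesis makes it vanish, leaving $\widetilde{P}(V,\varphi)=\widetilde{P}(V/V_1,\bar\varphi)$ and $\widetilde{P}(W,\psi)=\widetilde{P}(W/W_1,\bar\psi)$. Third, the $\GL$-invariance of $\widetilde{P}$ forces it to agree on isomorphic objects of $\mathcal{C}$, so $\bar f$ yields $\widetilde{P}(V/V_1,\bar\varphi)=\widetilde{P}(W/W_1,\bar\psi)$. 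Chaining the three equalities gives the claim; note that both sides collapse to $\widetilde{P}_s$ of the common quotient (where $s=\dim V_2=\dim W_2$), so the asserted equality ``for all $k$'' is automatic via extension by zero despite $\dim V$ and $\dim W$ possibly differing.

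The one step needing genuine care — and the main obstacle — is justifying the additivity in the first move, because the short exact sequence above need not split in $\mathcal{C}$: a vector-space complement $V_2$ to $V_1$ puts $\varphi$ in block-upper-triangular form $\left(\begin{smallmatrix}0 & *\\ 0 & \bar\varphi\end{smallmatrix}\right)$, and the off-diagonal block $*$ cannot in general be removed, so $(V,\varphi)$ is usually \emph{not} literally $(V_1,0)\oplus(V_2,\bar\varphi)$. I would close this gap with $\GL$-invariance: conjugating $\varphi$ by $\mathrm{diag}(tI_{V_1},I_{V_2})$ scales $*$ by $t$ while fixing the diagonal blocks, so $\widetilde{P}(V,\varphi)$ equals $\widetilde{P}$ of this conjugate for every $t\neq0$; since $\widetilde{P}$ is polynomial in the entries of the endomorphism, this value is a constant polynomial in $t$, hence equals its value at $t=0$, namely $\widetilde{P}\big((V_1,0)\oplus(V_2,\bar\varphi)\big)$, at which point $\oplus$-additivity applies on the nose. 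If, following the conventions of the companion paper, ``$\oplus$-additive'' already means additive over short exact sequences, then the first move is immediate and this conjugation argument is precisely the justification for that convention.
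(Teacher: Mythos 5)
Your proposal is correct, and its skeleton --- split off a flat piece, kill it using the vanishing hypothesis and full additivity, then transport what remains across $f$ by $\GL$-invariance --- is the same as the paper's. The genuinely valuable difference is precisely the point you flag as needing care. The paper's own proof simply writes $(V,\varphi)=(V_1,0)\oplus(V_2,\varphi\restricted V_2)$ and invokes additivity, but admissibility only provides the subspace $V_1\subseteq\Ker\varphi$, not a $\varphi$-stable complement: with respect to any vector-space complement $V_2$ the endomorphism is block upper triangular, and (for instance, for a nilpotent Jordan block with $V_1=\Ker\varphi$) the object $(V,\varphi)$ is not isomorphic in $\mathcal{C}$ to \emph{any} direct sum of the form $(V_1,0)\oplus(V_2,b)$, so the first displayed equality in the paper's proof is not literally justified as stated --- indeed $\varphi\restricted V_2$ need not even be an endomorphism of $V_2$. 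Your degeneration argument --- conjugate by $\mathrm{diag}(tI_{V_1},I_{V_2})$, use $\GL$-invariance for $t\neq0$, and use polynomiality in $t$ to pass to the block-diagonal limit $t=0$ --- is exactly the missing justification, and phrasing the second summand as the quotient $(V/V_1,\bar\varphi)$ rather than as a complement is the cleaner formulation (it also makes transparent why $\bar f$ is an isomorphism in $\mathcal{C}$, since $f\varphi=\psi f$ descends). Your closing remark about the equality ``for all $k$'' being automatic via extension by zero is also right. In short: your proof and the paper's have the same three moves, but yours repairs a gap that the paper's glosses over, at the cost of one extra (standard) invariant-theory argument.
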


        \begin{corollary}\label{corollary:invariant-polynomial-of-admissible-form-is-simplicial}
            Let $\omega_\bullet$ be an admissible endomorphism-valued simplicial $r$-form\footnote{See \cite[Definition~4.13]{Hosgood2021}.} on $\mathcal{E}^\bullet$, and $P_\anotherbullet=(P_k)_{k\in\mathbb{N}}$ a generalised invariant polynomial of degree $n$ that is zero on $n$-fold tensor products of flat objects.
            Then $\widetilde{P_\anotherbullet}(\omega_\bullet)$ is a simplicial $rn$-form.
            \begin{proof}
                First of all, we need to verify that $\widetilde{P_\anotherbullet}(\omega_\bullet)$ is well defined.
                Over any $U_{\alpha_0\ldots\alpha_p}$, we have the pair $(\overline{\mathcal{E}_p},\omega_p)\restricted U_{\alpha_0\ldots\alpha_p}\times\Delta^p$ consisting (after trivialisation) of an $\OO_{U_{\alpha_0\ldots\alpha_p}\times\Delta^p}$-module of rank $\mathfrak{r}(\alpha_0\ldots\alpha_p)$ along with a form-valued endomorphism.
                As mentioned in \cite[Remark~2.3]{Hosgood2021}, the rank $\mathfrak{r}(p)=\mathfrak{r}(\alpha_0\ldots\alpha_p)$ is independent of the open set $U_{\alpha_0\ldots\alpha_p}$.
                This means that we can define $\widetilde{P}_{\mathfrak{r}(p)}(\omega_p)$ by applying $\widetilde{P}_{\mathfrak{r}(p)}$ to the endomorphism part of $\omega_p^{\otimes n}$, and wedging the $n$-fold tensor product of the form part.

                Then we need to show that $(\nerve{\bullet} f_p^i\times\id)^*\widetilde{P}_{\mathfrak{r}(p-1)}(\omega_{p-1}) = (\id\times f_p^i)^*\widetilde{P}_{\mathfrak{r}(p)}(\omega_p)$.
                Now we can use the extension by zero property to replace $\widetilde{P}_{\mathfrak{r}(p-1)}$ and $\widetilde{P}_{\mathfrak{r}(p)}$ with $\widetilde{P}_{\mathfrak{s}}$, where $\mathfrak{s}=\max\{\mathfrak{r}(p-1),\mathfrak{r}(p)\}$.
                But then, by admissibility and \cref{lemma:generalised-invariant-polynomials-are-admissible-invariant}, it suffices to show that $\widetilde{P}_\mathfrak{s}$ commutes with pullbacks.
                Because $\widetilde{P}_{\mathfrak{s}}$ is just given by the wedge product of forms on the form part of $\omega_p$, it commutes with pullbacks there.
                On the endomorphism part of $\omega_p$, since we are working locally, the pullback is simply a change of basis, and so commutes with the $\widetilde{P}_{\mathfrak{s}}$ by $\GL$-invariance.
            \end{proof}
        \end{corollary}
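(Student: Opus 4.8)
The plan is to verify directly the two conditions defining a simplicial $rn$-form for the family $\{\widetilde{P}_{\mathfrak{r}(p)}(\omega_p)\}_{p\in\mathbb{N}}$: that each term is a well-defined $rn$-form on $\nerve{p}\times\Delta^p$, and that the family satisfies the coface-compatibility \cref{equation:simplicial-gluing-condition-for-forms}. For the first point, I would argue locally: over each chart $U_{\alpha_0\ldots\alpha_p}$ the restriction of $\omega_p$ is, after trivialising $\overline{\mathcal{E}_p}$, a matrix of $r$-forms, i.e. a form-valued endomorphism of a locally free module whose rank $\mathfrak{r}(p)$ is independent of the chart. Applying $\widetilde{P}_{\mathfrak{r}(p)}$ amounts to forming $\omega_p^{\otimes n}$ (wedging the $n$ form parts, raising the degree to $rn$, and tensoring the endomorphism parts) and then feeding the endomorphism part to the $\mathbb{C}$-valued $P_{\mathfrak{r}(p)}$; this collapses the matrix structure and leaves a genuine scalar $rn$-form. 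Constancy of $\mathfrak{r}(p)$ across charts in each fixed degree is exactly what makes this globally unambiguous.

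For the gluing condition I would fix a coface $f_p^i\colon[p-1]\to[p]$, noting that both sides of \cref{equation:simplicial-gluing-condition-for-forms} are $rn$-forms over $\nerve{p}\times\Delta^{p-1}$. Since the ranks $\mathfrak{r}(p-1)$ and $\mathfrak{r}(p)$ need not agree, the first move is to invoke the extension-by-zero property of \cref{definition:generalised-invariant-polynomial} to replace both $\widetilde{P}_{\mathfrak{r}(p-1)}$ and $\widetilde{P}_{\mathfrak{r}(p)}$ by a single $\widetilde{P}_{\mathfrak{s}}$ with $\mathfrak{s}=\max\{\mathfrak{r}(p-1),\mathfrak{r}(p)\}$. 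Next I would check that $\widetilde{P}_\mathfrak{s}$ commutes with the two pullbacks: on the form part this is clear because $\widetilde{P}_\mathfrak{s}$ only wedges forms, and on the endomorphism part a pullback is locally a change of trivialisation, hence a $\GL$-conjugation, under which $\widetilde{P}_\mathfrak{s}$ is invariant. This reduces the required identity to
\[
\widetilde{P}_\mathfrak{s}\big((\nerve{\bullet} f_p^i\times\id)^*\omega_{p-1}\big) = \widetilde{P}_\mathfrak{s}\big((\id\times f_p^i)^*\omega_p\big).
\]

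The final step, which is the crux, invokes admissibility: the two pulled-back form-valued endomorphisms displayed above are the source and target of the admissible morphism supplied, along $f_p^i$, by the admissibility of $\omega_\bullet$, so that \cref{lemma:generalised-invariant-polynomials-are-admissible-invariant} applies verbatim and yields the equality (using that $\widetilde{P_\anotherbullet}$ vanishes on $n$-fold tensor products of flat objects). I expect the main obstacle to be conceptual rather than computational: one has to recognise that admissibility is precisely the hypothesis feeding \cref{lemma:generalised-invariant-polynomials-are-admissible-invariant}, and that the rank discrepancy between consecutive simplicial degrees --- the very reason the naive gluing condition can fail for $\omega_\bullet$ itself --- is absorbed by the extension-by-zero and full-additivity mechanism, leaving only the non-flat parts, on which the comparison is a genuine $\GL$-isomorphism, to survive under $\widetilde{P}$.
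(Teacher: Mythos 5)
Your proposal is correct and follows essentially the same route as the paper's proof: local well-definedness via constancy of the rank $\mathfrak{r}(p)$, extension-by-zero to pass to a common $\widetilde{P}_{\mathfrak{s}}$ with $\mathfrak{s}=\max\{\mathfrak{r}(p-1),\mathfrak{r}(p)\}$, commutation with pullbacks via the wedge product on forms and $\GL$-invariance on endomorphisms, and admissibility feeding \cref{lemma:generalised-invariant-polynomials-are-admissible-invariant}. The only cosmetic difference is the order in which you invoke the pullback-commutation and the admissibility lemma, which does not change the argument.
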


    \subsection{Simplicial Chern-Weil}\label{subsection:simplicial-chern-weil}

        The simplicial version of Chern-Weil theory that we will now discuss is almost identical to the classical one, such as that found in \cite[§4.4]{Huybrechts2005}: we can apply the classical arguments ``simplicial level by simplicial level'', and then use the admissibility condition to ensure that these all glue.

        \begin{lemma}\label{lemma:chern-weil-lemmas}
            Let $\mathcal{E}^\bullet$ be a vector bundle on the nerve, and $\nabla_\bullet$ an admissible simplicial connection on $\mathcal{E}^\bullet$.
            Let $A_\bullet$ be an admissible endomorphism-valued simplicial $1$-form, $\gamma_\bullet^j$ admissible endomorphism-valued simplicial $d_j$-forms, and let $P_\bullet$ be a generalised invariant polynomial of degree $n$.
            Then
            \begin{enumerate}
                \item $\widetilde{P}_\anotherbullet(\kappa(\nabla_\bullet + t A_\bullet)) = \widetilde{P}_\anotherbullet(\kappa(\nabla_\bullet)) + \kappa t P_\anotherbullet(\kappa(\nabla_\bullet),\ldots,\kappa(\nabla_\bullet),\nabla_\bullet(A_\bullet)) + \OO(t^2)$;
                \item $\kappa(\nabla_\bullet + A_\bullet) = \kappa(\nabla_\bullet) + A_\bullet\cdot A_\bullet + \nabla_\bullet(A_\bullet)$;
                \item $\nabla_\bullet(\kappa(\nabla_\bullet)) = 0$;
                \item $\d P_\anotherbullet(\gamma_\bullet^1,\ldots,\gamma_\bullet^k) = \sum_{j=1}^k (-1)^{\sum_{\ell=1}^{j-1}d_j} P_\anotherbullet(\gamma_\bullet^1,\ldots,\nabla_\bullet(\gamma_\bullet^j),\ldots,\gamma_\bullet^k)$;
                \item $\d P_\anotherbullet(\kappa(\nabla_\bullet)) = 0$.
            \end{enumerate}

            \begin{proof}
                The first statement is found in the proof of \cite[Lemma~4.4.6]{Huybrechts2005}, and so we simply apply it to each $P_{\mathfrak{r}(p)}(\kappa(\nabla_p))$, where $\mathfrak{r}(p)$ is the rank of $\mathcal{E}^p$; the last four are \cite[Lemma~4.3.4,Lemma~4.3.5,Lemma~4.4.4,Corollary~4.4.5]{Huybrechts2005} (respectively) applied to each $P_{\mathfrak{r}(p)}(\kappa(\nabla_p))$.
            \end{proof}
        \end{lemma}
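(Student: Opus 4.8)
The plan is to reduce each of the five identities to its classical counterpart one simplicial level at a time, and to invoke admissibility only to promote these level-wise statements to equalities of genuine simplicial forms. Fix a simplicial degree $p$. Restricting the data to level $p$ yields an ordinary connection $\nabla_p$ on $\mathcal{E}^p$, regarded as a bundle over the product $\nerve{p}\times\Delta^p$, together with ordinary endomorphism-valued forms $A_p$ and $\gamma_p^j$ on that product; the curvature $\kappa(\nabla_p)=\nabla_p\circ\nabla_p$, the operator $\nabla_p$ acting on forms, the wedge $A_p\cdot A_p$, and $\widetilde{P}_{\mathfrak{r}(p)}$ are then exactly the objects of classical Chern--Weil theory on a single manifold. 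Each formula therefore becomes, level by level, a standard identity for $\nabla_p$: the first-order variation in (1), the perturbed-curvature formula in (2), the Bianchi identity in (3), the Leibniz rule for an invariant polynomial in (4), and the closedness of $\widetilde{P}_{\mathfrak{r}(p)}(\kappa(\nabla_p))$ in (5), all of which hold by the classical arguments of \cite[\S4.3--4.4]{Huybrechts2005}. In particular (5) is obtained from (4), applied with every argument equal to $\kappa(\nabla_p)$, together with (3).

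The substantive step is to check that both sides of each level-wise equality satisfy the coface compatibility \cref{equation:simplicial-gluing-condition-for-forms}, so that the identities hold as equalities of simplicial forms. For every term of the shape $\widetilde{P_\anotherbullet}$ (or $P_\anotherbullet$) applied to admissible arguments, this is precisely \cref{corollary:invariant-polynomial-of-admissible-form-is-simplicial}, provided $P_\anotherbullet$ vanishes on $n$-fold tensor products of flat objects (which we may assume, as the characteristic-class polynomials of interest do) and provided the arguments really are admissible. I expect the main obstacle to be exactly this admissibility bookkeeping: the hypotheses supply admissibility of $\nabla_\bullet$, $A_\bullet$, and the $\gamma_\bullet^j$, but the derived quantities $\kappa(\nabla_\bullet)$, $\nabla_\bullet(A_\bullet)$, $\nabla_\bullet+tA_\bullet$, and $A_\bullet\cdot A_\bullet$ must also be shown admissible before \cref{corollary:invariant-polynomial-of-admissible-form-is-simplicial} applies. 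I would settle this by appealing to the defining property of an admissible simplicial connection from the previous paper — its curvature is admissible by design — and to the fact that admissibility, being a condition on the behaviour of forms under the coface maps, is stable under the sums, composites, wedges, and applications of $\nabla_\bullet$ out of which these quantities are built.

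Finally, a point requiring care is that the $\d$ appearing in (4) and (5) is the \emph{total} simplicial differential, which by the Koszul convention splits as $\d_{X_\bullet}+(-1)^{|\cdot|}\d_{\Delta^\bullet}$, whereas the classical statements are phrased for a single de Rham differential. Treating $\nabla_p$ as an honest connection on the product manifold $\nerve{p}\times\Delta^p$ dissolves the difficulty: there $\d$ is simply the de Rham differential of the product, so the classical Leibniz and closedness identities apply verbatim, and the degree-dependent signs they produce match the signs $(-1)^{\sum_{\ell=1}^{j-1}d_j}$ recorded in (4). Comparing $\widetilde{P}_{\mathfrak{r}(p-1)}$ with $\widetilde{P}_{\mathfrak{r}(p)}$ across adjacent levels then uses the extension-by-zero property of \cref{definition:generalised-invariant-polynomial} together with \cref{lemma:generalised-invariant-polynomials-are-admissible-invariant}, exactly as in the proof of \cref{corollary:invariant-polynomial-of-admissible-form-is-simplicial}; with that comparison in hand, no obstruction to gluing survives and all five identities hold as stated.
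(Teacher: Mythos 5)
Your proposal is correct and follows essentially the same route as the paper: the paper's proof simply applies the classical identities of \cite[Lemmas~4.3.4, 4.3.5, 4.4.4, 4.4.6, Corollary~4.4.5]{Huybrechts2005} level by level to each $P_{\mathfrak{r}(p)}(\kappa(\nabla_p))$, exactly as you do, with admissibility ensuring the level-wise identities assemble into identities of simplicial forms. The admissibility bookkeeping you flag as the main obstacle is in fact absorbed by the definition of an admissible simplicial connection from the previous paper (the curvature is required to be admissible and evaluable under generalised invariant polynomials by definition), so no further argument is needed there.
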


        \begin{lemma}\label{lemma:admissible-compatible-gives-exact}
            If $\nabla_\bullet$ and $\nabla'_\bullet$ are two \emph{admissible}\footnote{That is, a connection $\nabla_p$ on each $\mathcal{E}_p$ such that the curvatures define a simplicial differential form and such that we can actually evaluate a generalised invariant polynomial on the curvatures in a choice-free way. See \cite[Definition~4.21]{Hosgood2021} for the precise definition.} simplicial connections on $\mathcal{E}^\bullet$ such that their difference is admissible, and $P_\bullet$ is a generalised invariant polynomial of degree $n$, then
            \[
                [P_\anotherbullet(\kappa(\nabla_\bullet))]
                =
                [P_\anotherbullet(\kappa(\nabla'_\bullet))],
            \]
            i.e. the difference $P_\anotherbullet(\kappa(\nabla'_\bullet)) - P_\anotherbullet(\kappa(\nabla_\bullet)) \in \Omega_{\nerve{\bullet}}^{\Delta,2n}$ is exact in the simplicial de~Rham complex.

            \begin{proof}
                This is \cite[Lemma~4.4.6]{Huybrechts2005} applied in each simplicial degree (as in the proof of \cref{lemma:chern-weil-lemmas}), but where we need the admissibility condition to see that everything glues: writing (locally) $\nabla_\bullet = \d+A_\bullet$ and $\nabla'_\bullet = \d+A'_\bullet$, we see that
                \begin{align*}
                    \kappa(\nabla'_\bullet) - \kappa(\nabla_\bullet)
                    &= \d(A'_\bullet - A_\bullet) + A_\bullet\cdot A_\bullet + A'_\bullet\cdot A'_\bullet
                \\  &= \d(A'_\bullet - A_\bullet) + (A'_\bullet - A_\bullet)^2 + A'_\bullet\cdot A_\bullet + A_\bullet\cdot A'_\bullet.
                \end{align*}
                By skew-symmetry, the last two terms will cancel when we apply $P_\bullet$, and so we want the difference $B_\bullet = A'_\bullet - A_\bullet$ to be \emph{admissible}, since then $P_\anotherbullet(B_\bullet)$ will be an endomorphism-valued simplicial form (by \cref{corollary:invariant-polynomial-of-admissible-form-is-simplicial}), and so $P_\anotherbullet(\d B_\bullet) = \d P_\anotherbullet(B_\bullet)$ will be closed in the simplicial de~Rham complex.
                But this is the case, by hypothesis.
            \end{proof}
        \end{lemma}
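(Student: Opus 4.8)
The plan is to run the classical Chern--Weil transgression of \cite[Lemma~4.4.6]{Huybrechts2005} one simplicial degree at a time, and then to invoke admissibility to guarantee that the resulting transgression forms assemble, across coface maps, into a genuine element of the simplicial de Rham complex; exactness then holds in $\Omega^{\Delta,\bullet}_{\nerve{\bullet}}$ rather than merely degree-by-degree. So the only genuinely simplicial input is a gluing statement, and everything else is imported from the manifold case.

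First I would set $B_\bullet = \nabla'_\bullet - \nabla_\bullet$ (an endomorphism-valued simplicial $1$-form, admissible by hypothesis) and interpolate along $\nabla_\bullet^t = \nabla_\bullet + t B_\bullet$ for $t\in[0,1]$. On each $\nerve{p}\times\Delta^p$ this is an honest affine family of connections, so the classical computation applies verbatim. Differentiating \cref{lemma:chern-weil-lemmas}(2) shows that the $t$-derivative of the curvature is $\nabla_\bullet^t(B_\bullet)$, so the first-order variation formula \cref{lemma:chern-weil-lemmas}(1) gives
\[
\frac{\d}{\d t}\,\widetilde{P}_\anotherbullet\big(\kappa(\nabla_\bullet^t)\big) = n\, P_\anotherbullet\big(\nabla_\bullet^t(B_\bullet), \kappa(\nabla_\bullet^t),\ldots,\kappa(\nabla_\bullet^t)\big),
\]
the factor $n$ coming from polarising the degree-$n$ polynomial. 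Feeding this through the Leibniz rule \cref{lemma:chern-weil-lemmas}(4), and killing the terms in which $\nabla_\bullet^t$ lands on a curvature slot by the Bianchi identity \cref{lemma:chern-weil-lemmas}(3) ($\nabla_\bullet^t(\kappa(\nabla_\bullet^t)) = 0$), rewrites the right-hand side as $n\,\d\, P_\anotherbullet(B_\bullet, \kappa(\nabla_\bullet^t),\ldots,\kappa(\nabla_\bullet^t))$. Integrating over $t\in[0,1]$ then yields
\[
\widetilde{P}_\anotherbullet\big(\kappa(\nabla'_\bullet)\big) - \widetilde{P}_\anotherbullet\big(\kappa(\nabla_\bullet)\big) = \d\Big( n\int_0^1 P_\anotherbullet\big(B_\bullet, \kappa(\nabla_\bullet^t),\ldots,\kappa(\nabla_\bullet^t)\big)\,\d t\Big),
\]
exhibiting the difference, in each simplicial degree, as $\d$ of the transgression form $T_\bullet$ (recall that the $\widetilde{P}_\anotherbullet(\kappa)$ here are exactly the characteristic forms written $P_\anotherbullet(\kappa)$ in the statement).

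The hard part is not this level-wise identity, which is classical, but checking that $T_\bullet$ is a \emph{simplicial} form, i.e. that its components $T_p$ satisfy the gluing condition \cref{equation:simplicial-gluing-condition-for-forms} under every coface map; only then does $T_\bullet$ live in $\Omega^{\Delta,2n-1}_{\nerve{\bullet}}$ and witness exactness in the simplicial de Rham complex. This is exactly where the admissibility of the difference $B_\bullet$ is needed: the integrand is a polarised invariant polynomial evaluated on $B_\bullet$ together with the curvatures $\kappa(\nabla_\bullet^t)$ (themselves admissible, since $\nabla_\bullet$ and $B_\bullet$ are), so the mechanism of \cref{corollary:invariant-polynomial-of-admissible-form-is-simplicial} --- $\GL$-invariance on the endomorphism part and naturality of the wedge on the form part, so that coface pullbacks act as a mere change of basis --- forces the $T_p$ to glue. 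I expect the one new point to be recording the multi-argument version of \cref{corollary:invariant-polynomial-of-admissible-form-is-simplicial}: that Corollary is phrased for the diagonal $\widetilde{P_\anotherbullet}(\omega_\bullet)$, whereas $T_\bullet$ uses $P_\anotherbullet$ with mixed slots, but its proof goes through word for word.

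Finally, I would include the hands-on local computation that makes the role of admissibility transparent: writing $\nabla_\bullet = \d + A_\bullet$ and $\nabla'_\bullet = \d + A'_\bullet$ over a trivialising patch and expanding with \cref{lemma:chern-weil-lemmas}(2),
\[
\kappa(\nabla'_\bullet) - \kappa(\nabla_\bullet) = \d B_\bullet + B_\bullet\cdot B_\bullet + A_\bullet\cdot B_\bullet + B_\bullet\cdot A_\bullet,
\]
and the cross terms $A_\bullet\cdot B_\bullet + B_\bullet\cdot A_\bullet$ cancel once $P_\anotherbullet$ is applied, by the skew-symmetry that sends $\tr(M\cdot N)$ to $-\tr(N\cdot M)$ for form-valued endomorphisms. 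What survives is governed by $B_\bullet$ alone, so admissibility of $B_\bullet$ is precisely what makes the relevant invariant-polynomial expression a simplicial form; this is why the hypothesis that the \emph{difference} be admissible is imposed, since dropping it leaves one with only degreewise exactness.
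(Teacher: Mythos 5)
Your proposal is correct and takes essentially the same route as the paper: the paper's proof is exactly the citation of Huybrechts Lemma~4.4.6 (i.e.\ the transgression argument you spell out) applied in each simplicial degree, together with the observation that admissibility of the difference $B_\bullet = \nabla'_\bullet - \nabla_\bullet$ is what makes the relevant forms glue into a genuine element of the simplicial de Rham complex --- and your closing local computation, with the cross terms cancelling under $P_\anotherbullet$ by skew-symmetry, is precisely the computation the paper gives. The only difference is explicitness: you unfold the cited transgression and correctly flag the need for a multi-argument version of \cref{corollary:invariant-polynomial-of-admissible-form-is-simplicial} (and for admissibility of $\kappa(\nabla^t_\bullet)$ along the interpolation), points which the paper's terser proof leaves implicit.
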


        \begin{remark}\label{remark:gidz-on-green-is-compatible}
            Recall \cite[Lemma~4.31]{Hosgood2021}: generated-in-degree-zero simplicial connections on a Green (complex of) vector bundle(s) on the nerve form a \define{compatible family}: the difference of any two such connections is an \emph{admissible} endomorphism-valued simplicial form.
        \end{remark}

    \subsection{The Green example}

        \begin{definition}\label{definition:simplicial-atiyah-classes}
            Given a vector bundle on the nerve $\mathcal{E}^\bullet$ and an \emph{admissible} simplicial connection $\nabla_\bullet$ on $\mathcal{E}^\bullet$, we define the \define{$k$-th simplicial exponential Atiyah class} $\simpexpat{k}_{\mathcal{E}^\bullet}$ by
            \begin{equation*}
                \simpexpat{k}_{\mathcal{E}^\bullet}
                =
                \left\{
                    \epsilon_k\kappa\left(\nabla_\bullet\right)^k
                \right\}
                \in
                \sheafend(\mathcal{E}^\bullet)\otimes\Omega_{\nerve{\bullet}}^{2k,\Delta}
            \end{equation*}
            and the \define{$k$-th simplicial standard Atiyah class} $\simpstanat{k}_{\mathcal{E}^\bullet}$ by
            \begin{equation*}
                \simpstanat{k}_{\mathcal{E}^\bullet}
                =
                \left\{
                    \epsilon_k\kappa\left(\nabla_\bullet\right)^{\wedge k}
                \right\}
                \in
                \sheafend(\mathcal{E}^\bullet)\otimes\Omega_{\nerve{\bullet}}^{2k,\Delta}
            \end{equation*}
            where $\epsilon_k=(-1)^{k(k-1)/2}$.
        \end{definition}

        \begin{remark}
            Both $\tr\simpexpat{k}_{\mathcal{E}^\bullet}$ and $\tr\simpstanat{k}_{\mathcal{E}^\bullet}$ are generalised invariant polynomials evaluated on the curvature $\kappa(\nabla_\bullet)$.
            That is, \emph{taking the trace} of the simplicial Atiyah classes gives us characteristic classes in the simplicial de~Rham complex.
            The fact that this is well defined relies on the fact that the simplicial connection is admissible (indeed, this is exactly the purpose of admissibility, cf. \cite[Remark~4.24]{Hosgood2021}).
        \end{remark}

        \begin{example}
            We now finish our study of the example of Green that was discussed in \cite[Examples~2.23 and 4.16]{Hosgood2021}.
            Recall that we had $X=\mathbb{P}_\mathbb{C}^1\cong\mathbb{C}\cup\{\infty\}$ with the cover $\cover=\{U_1,U_2\}$, where $U_1=X\setminus\{\infty\}$ and $U_2=X\setminus\{0\}$, and the coherent sheaf $\mathscr{F}$ given by $\OO_X/\mathscr{I}$, where $\mathscr{I}=\mathbb{I}(\{0\})$ is the sheaf of ideals corresponding to the subvariety $\{0\}\subset X$.
            We had constructed the barycentric connections $\nabla_\bullet^i$ on $\mathcal{E}^{\bullet,i}$ as
            \begin{align*}
                \nabla_\bullet^0\text{ on }\mathcal{E}^{\bullet,0}\text{ is given by }
                &\begin{cases}
                    \nabla_0^0 = t_0\d&=\d\\
                    \nabla_1^0 = t_0\d+t_1\d&=\d
                \end{cases}\\
                \nabla_\bullet^1\text{ on }\mathcal{E}^{\bullet,1}\text{ is given by }
                &\begin{cases}
                    \nabla_0^1 = t_0\d&=\d\\
                    \nabla_1^1 = t_0\d+t_1\left(\d+\frac{\d z}{z}\right)&=\d+t_1\frac{\d z}{z}.
                \end{cases}
            \end{align*}

            We see that the only non-trivial part of the simplicial connections is in (simplicial) degree 1, over $U_{12}$, and so we calculate these curvatures:
            \begin{align*}
                \kappa(\nabla_1^0) &= 0;\\
                \kappa(\nabla_1^1) &= \d\left(t_1\frac{\d z}{z}\right)+\left(t_1\frac{\d z}{z}\right)^2\\
                &= \frac{\d z}{z}\otimes\d t_1.
            \end{align*}
            Recalling \cref{remark:fibre-integration-only-depends-on-certain-parts}, we know that the fibre integral is given by integrating the \mbox{$(k-p,p)$} term over the $p$-simplex (and here we are taking $k=1$), and so we find that the\footnote{The first exponential and first standard Atiyah classes agree, by definition, so we do not need to say which we are using.} Atiyah classes are given by
            \begin{align*}
                \int_{\Delta^1}\kappa(\nabla_1^0) &= 0;\\
                \int_{\Delta^1}\kappa(\nabla_1^1) &= \int_0^1\frac{\d z}{z}\d t_1 = \frac{\d z}{z}.
            \end{align*}

            Note that the square of either curvature is zero, and so only the \emph{first} Atiyah class is non-trivial --- all higher ones are zero.
            Using the convention/definition that the zero-th Atiyah class is $1$, we have the total Atiyah classes (that is, Chern characters), defined as the sums of all the Atiyah classes:
            \begin{align*}
                \mathrm{at}^\mathrm{tot}_{\mathcal{E}^{\bullet,0}} &= 1,\\
                \mathrm{at}^\mathrm{tot}_{\mathcal{E}^{\bullet,1}} &= 1+\frac{\d z}{z}.
            \end{align*}

            Finally, if we say that, for a resolution
            \begin{equation*}
                \mathcal{R}^\anotherbullet=(\mathcal{R}^{\bullet,n}\to\ldots\to\mathcal{R}^{\bullet,1})
                \xrightarrow{\sim}\mathcal{R}^{\bullet,0},
            \end{equation*}
            the total Atiyah class (or Chern character) of $\mathcal{R}^{\bullet,0}$ is given by the alternating sum of the total Atiyah classes (or Chern characters) of the $\mathcal{R}^{\bullet,i}$, then
            \begin{equation*}
                \mathrm{ch}(\mathscr{F})
                =
                \left[\mathrm{at}^\mathrm{tot}_{\mathscr{F}_\bullet}\right]
                =
                \left[\frac{\d z}{z}\right].
            \end{equation*}
            This agrees with what one might calculate using a short exact sequence (writing the skyscraper sheaf as a quotient) and the Whitney sum formula, but is stronger, since Green's method gives actual \emph{representatives} of the cohomology class as well.
        \end{example}

\section{Simplicial construction for global vector bundles}\label{section:simplicial-construction-for-vector-bundles}

    \subsection{The barycentric connection}

        Throughout this section,
        \begin{itemize}
            \item as always, let $E$ be a vector bundle on $X$ with local \emph{flat} connections $\nabla_\alpha$;
            \item write $\pi_p\colon\nerve{p}\times\Delta^p\to\nerve{p}$ to mean the projection map;
            \item define $E^\bullet$, a vector bundle on the nerve, by setting $E^p=(\nerve{p}\to X)^*E$;
            \item write $\overline{E^p}$ to mean the pullback of $E^p$ along $\pi_p$:
                \[
                    \overline{E^p}
                    =
                    \pi_p^* E^p
                    =
                    \pi_p^{-1} E^p \otimes_{\pi_p^{-1}\OO_{\nerve{p}}} \OO_{\nerve{p}\times\Delta^p}.
                \]
        \end{itemize}

        \begin{remark}
            The pullback to the nerve of a vector bundle $E$ is \emph{strongly cartesian}: for all $\varphi\colon[p]\to[q]$ in $\Delta$, the morphism
            \[
                E^\bullet\varphi\colon
                \left(\nerve{\bullet}\varphi\right)^* E^p
                \longrightarrow
                E^q
            \]
            is an isomorphism.\footnote{It is actually an identity map.}
            This means that, writing $\zeta_p^i\colon[0]\to[p]$ to mean the map of simplices that sends $0$ to $i$, we can identify $(\nerve{\bullet}\zeta_p^i)^* E^0$ with $E^p$, by using the isomorphism $E_\bullet\zeta_p^i$ between them.
            This lets us, in particular, think of any connection $\nabla_{\alpha_i}$ on $E^0$ as a connection on $E^p$, and we keep the same notation to denote both.
            Then the $\nabla_\alpha$-flat sections of $\overline{E^p}$ are exactly those of the form $\pi_p^*(s)$, where $s$ is some $\nabla_\alpha$-flat section of $E\restricted U_\alpha$.
            Note also that the morphism $\OO_{\Delta^p}\to\OO_{\nerve{p}\times\Delta^p}$ gives us an $\OO_{\Delta^p}$-action on $E^p$.
        \end{remark}

        \begin{definition}[{\cite[34]{Green1980}}]
            The \define{barycentric connection} $\nabla_\bullet^\mu$ on $\overline{E^\bullet}$ is defined as
            \[
                \nabla_p^\mu
                =
                \sum_{i=0}^p t_i\nabla_{\alpha_i}
                \colon
                \overline{E^p}
                \to
                \overline{E^p}\otimes\Omega_{\nerve{p}\times\Delta^p}^1
            \]
            which acts on a section $s\otimes f$ of $\overline{E^p}=\pi_p^{-1}E^p\otimes\OO_{\nerve{p}\times\Delta^p}$ over $U_{\alpha_0\ldots\alpha_p}\times\Delta^p$ by
            \begin{align*}
                \nabla_p^\mu(s\otimes f)
                = \sum_{i=0}^p t_i\nabla_{\alpha_i}(s\otimes f)
                &= \sum_i t_i\nabla_{\alpha_i}( f s\otimes1)
            \\  &= \sum_i t_i\left( f\pi_p^*\left(\nabla_{\alpha_i}(s)\right)+s\otimes1\otimes\d f\right)
            \\  &= \sum_i \pi_p^*\left(t_i f\otimes\nabla_{\alpha_i}(s)\right)+s\otimes1\otimes t_i\d f
            \end{align*}
        \end{definition}

        \begin{remark}
            Recalling \cref{definition:omega-alpha-beta}, we can rewrite the barycentric connection as
            \[
                \nabla_p^\mu
                =
                \nabla_{\alpha_0} + \sum_{i=1}^p t_i \left(
                    \nabla_{\alpha_i} - \nabla_{\alpha_0}
                \right)
                =
                \nabla_{\alpha_0} + \sum_{i=1}^p t_i \omega_{\alpha_0\alpha_i}.
            \]
        \end{remark}

        \begin{remark}
            By \cite[Theorem~4.29, Remark~4.30]{Hosgood2021}, we know that the barycentric connection is indeed an admissible simplicial connection.
        \end{remark}

        \begin{definition}
            The \define{curvature} of the barycentric connection is defined level-wise, and acts on a section $\sigma$ of $\overline{E^p}$ over $U_{\alpha_0\ldots\alpha_p}\times\Delta^p$ by
            \begin{align*}
                \kappa\left(\nabla_p^\mu\right)(\sigma)
                &= \left(\nabla_{\alpha_0}+\sum_{i=1}^p t_i\omega_{\alpha_0\alpha_i}\right)^2\left(\sigma\right)
            \\  &= \nabla_{\alpha_0}^2\left(\sigma\right) + \sum_{i=1}^p\left[\left(\nabla_{\alpha_0}\left(\sigma\right)\cdot t_i\omega_{\alpha_0\alpha_i}\right)+\sigma\otimes\d(t_i\omega_{\alpha_0\alpha_i})\right]
            \\  & \quad+\sum_{i=1}^p\left[t_i\omega_{\alpha_0\alpha_i}\cdot\nabla_{\alpha_0}\left(\sigma\right)\right] + \sum_{i,j=1}^p\sigma\otimes\left(t_jt_i\omega_{\alpha_0\alpha_j}\omega_{\alpha_0\alpha_i}\right).
            \end{align*}
        \end{definition}

        \begin{definition}
            Let $\overline{\omega}_\bullet$ be the endomorphism-valued simplicial form given by
            \[
                \overline{\omega}_p
                =
                \sum_{i=1}^p t_i\omega_{\alpha_0\alpha_i}
            \]
            where we identify $t_i$ with $t_i I_\mathfrak{r}$, where $I_\mathfrak{r}$ is the $(\mathfrak{r}\times\mathfrak{r})$ identity matrix.
        \end{definition}

        \begin{remark}\label{remark:expression-for-barycentric-connection}
            In the case where $\sigma = \sigma_k^{\alpha_0} = \pi_p^*(s_k^{\alpha_0})$, where $\{s_1^{\alpha_0},\ldots,s_\mathfrak{r}^{\alpha_0}\}$ is a $\nabla_{\alpha_0}$-flat basis of $E\restricted U_{\alpha_0}$, we have that
            \[
                \nabla_{\alpha_0}\left(\sigma_k^{\alpha_0}\right)
                =
                \pi_p^*\left(\nabla_{\alpha_0}\left(s^{\alpha_0}_k\right)\right)
                =
                0.
            \]
            This lets us simplify the expression for the barycentric curvature, since
            \begin{align*}
                & \nabla_{\alpha_0}^2\left(\sigma_k^{\alpha_0}\right) + \sum_{i=1}^p\left[\left(\nabla_{\alpha_0}\left(\sigma_k^{\alpha_0}\right)\cdot t_i\omega_{\alpha_0\alpha_i}\right)+\sigma_k^{\alpha_0}\otimes\d(t_i\omega_{\alpha_0\alpha_i})\right]
            \\  & +\sum_{i=1}^p\left[t_i\omega_{\alpha_0\alpha_i}\cdot\nabla_{\alpha_0}\left(\sigma_k^{\alpha_0}\right)\right] + \sum_{i,j=1}^p\sigma_k^{\alpha_0}\otimes\left(t_jt_i\omega_{\alpha_0\alpha_j}\omega_{\alpha_0\alpha_i}\right)
            \\  =& \,\,\sum_{i=1}^p\sigma_k^{\alpha_0}\otimes\d(t_i\omega_{\alpha_0\alpha_i}) + \sum_{i,j=1}^p\sigma_k^{\alpha_0}\otimes\left(t_jt_i\omega_{\alpha_0\alpha_j}\omega_{\alpha_0\alpha_i}\right).
            \end{align*}

            In essence, this tells us that, given a (flat) basis of sections of $\overline{E^p} \restricted (U_{\alpha_0\ldots\alpha_p}\times\Delta^p)$ in the $U_{\alpha_0}$ trivialisation, the barycentric curvature acts simply as
            \[
                \kappa\left(\nabla_p^\mu\right)
                =
                \d\overline{\omega}_p + \overline{\omega}_p\cdot\overline{\omega}_p
            \]
            where $\d\overline{\omega}_p = (\d_X - \d_{\Delta^p})\overline{\omega}_p$ (following the Koszul convention, as explained in \cref{subsection:fibre-integration}).
        \end{remark}

        \begin{remark}\label{remark:barycentric-connection-is-admissible}
            Since $E^\bullet$ is the pullback of a global vector bundle, it is in particular strongly cartesian, and in fact Green.
            Since the barycentric connection is defined exactly as a connection generated in degree zero, we can apply \cite[Theorem~4.29]{Hosgood2021}, which tells us that it is an admissible simplicial connection.
            Further, \cref{remark:gidz-on-green-is-compatible} tells us that barycentric connections form a compatible family, and so \cref{lemma:admissible-compatible-gives-exact} says that the resulting characteristic classes are well defined.

            The barycentric curvature is, by definition (since the barycentric connection is admissible), admissible, but we can actually show that $\overline{\omega}_\bullet$ itself is an admissible endomorphism-valued simplicial 2-form, by an argument very similar to that of the proof of \cite[Theorem~4.29]{Hosgood2021}.
        \end{remark}

        \begin{lemma}
            The trace of the $k$-th simplicial (standard or exponential) Atiyah class is closed in the simplicial de~Rham complex.
            \begin{proof}
                We have already formally proven this in \cref{subsection:simplicial-chern-weil}, but we give here an explicit proof for the case $k=1$ as well.
                For any $p\in\mathbb{N}$,
                \begin{align*}
                    \d\left(\d\overline{\omega}_p+\overline{\omega}_p\cdot\overline{\omega}_p\right) &= \d^2\left(\sum_{i=0}^p t_i\omega_{\alpha_0\alpha_i}\right)+\d\left(\sum_{i,j=1}^p t_jt_i\omega_{\alpha_0\alpha_j}\omega_{\alpha_0\alpha_i}\right)\\
                    &=\d\left(\sum_{i,j=1}^p t_jt_i\omega_{\alpha_0\alpha_j}\omega_{\alpha_0\alpha_i}\right)\\
                    &=\sum_{i,j=1}^p\left[\omega_{\alpha_0\alpha_j}\omega_{\alpha_0\alpha_i}\otimes\d_{\Delta}(t_jt_i) + t_jt_i\d_X\left(\omega_{\alpha_0\alpha_j}\omega_{\alpha_0\alpha_i}\right)\right]\\
                    &=\sum_{i,j=1}^p\big[\omega_{\alpha_0\alpha_j}\omega_{\alpha_0\alpha_i}\otimes t_i\d t_j + \omega_{\alpha_0\alpha_j}\omega_{\alpha_0\alpha_i}\otimes t_j\d t_i\\
                    &\qquad\quad + t_jt_i\d_X\left(\omega_{\alpha_0\alpha_j}\right)\omega_{\alpha_0\alpha_i} - t_jt_i\omega_{\alpha_0\alpha_j}\d_X\left(\omega_{\alpha_0\alpha_i}\right)\big].
                \end{align*}
                But, recalling the proof of \cref{lemma:d-tr-omega-is-zero}, we know that
                \[
                    \tr\left(\omega_{\alpha_0\alpha_j}\omega_{\alpha_0\alpha_i}\right)=-\tr\left(\omega_{\alpha_0\alpha_i}\omega_{\alpha_0\alpha_j}\right),
                \]
                and \cref{lemma:d-omega-is-minus-omega-squared} tells us that $\d\omega_{\alpha_0\alpha_i}=-\omega_{\alpha_0\alpha_i}^2$, whence
                \begin{align*}
                    \tr\d\left(\d\overline{\omega}_p+\overline{\omega}_p\cdot\overline{\omega}_p\right) &= \tr\sum_{i,j=1}^p\big[\omega_{\alpha_0\alpha_j}\omega_{\alpha_0\alpha_i}\otimes t_i\d t_j + \omega_{\alpha_0\alpha_j}\omega_{\alpha_0\alpha_i}\otimes t_j\d t_i\\
                    &\qquad\qquad + t_jt_i\d_X\left(\omega_{\alpha_0\alpha_j}\right)\omega_{\alpha_0\alpha_i} - t_jt_i\omega_{\alpha_0\alpha_j}\d_X\left(\omega_{\alpha_0\alpha_i}\right)\big]\\
                    &= \tr\sum_{i,j=1}^p\big[\omega_{\alpha_0\alpha_j}\omega_{\alpha_0\alpha_i}\otimes t_i\d t_j - \omega_{\alpha_0\alpha_i}\omega_{\alpha_0\alpha_j}\otimes t_j\d t_i\\
                    &\qquad\qquad + t_jt_i\omega_{\alpha_0\alpha_j}^2\omega_{\alpha_0\alpha_i} - t_jt_i\omega_{\alpha_0\alpha_j}\omega_{\alpha_0\alpha_i}^2\big].
                \end{align*}
                For fixed $i,j$, the first two terms both change sign under $i\leftrightarrow j$, whence they contribute zero to the trace, since they are equal when $i=j$.
                We also know\footnote{The trace on endomorphism-valued simplicial forms is cyclic, up to a sign, as mentioned in the proof of \cref{lemma:d-tr-omega-is-zero}.} that $\tr\left(\omega_{\alpha_0\alpha_j}^2\omega_{\alpha_0\alpha_i}\right)=\tr\left(\omega_{\alpha_0\alpha_i}\omega_{\alpha_0\alpha_j}^2\right)$, whence the last two terms also both change sign under $i\leftrightarrow j$ and are equal when $i=j$, as above.
                Thus the trace is zero.
            \end{proof}
        \end{lemma}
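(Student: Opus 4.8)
The plan is to derive the statement in full generality from the abstract simplicial Chern--Weil machinery of \cref{subsection:simplicial-chern-weil}, and then to back this up with a hands-on computation in the case $k=1$, where the standard and exponential classes coincide.

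For the abstract argument, I would first note that, by the remark immediately following \cref{definition:simplicial-atiyah-classes}, both $\tr\simpexpat{k}_{\mathcal{E}^\bullet}$ and $\tr\simpstanat{k}_{\mathcal{E}^\bullet}$ are of the form $\widetilde{P}_\anotherbullet(\kappa(\nabla_\bullet^\mu))$ for a generalised invariant polynomial $P_\anotherbullet$ of degree $k$ --- namely the one associated to $\tr(M^k)$ in the exponential case and to $\tr(\wedge^k M)$ in the standard case, both of which vanish on tensor products of flat objects. Since the barycentric connection is admissible (\cref{remark:barycentric-connection-is-admissible}), \cref{corollary:invariant-polynomial-of-admissible-form-is-simplicial} ensures that $\widetilde{P}_\anotherbullet(\kappa(\nabla_\bullet^\mu))$ really is a simplicial $2k$-form; and then item~5 of \cref{lemma:chern-weil-lemmas}, namely $\d P_\anotherbullet(\kappa(\nabla_\bullet)) = 0$ applied simplicial level by simplicial level, is exactly the desired closedness. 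This reduces the lemma to a single citation.

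For the explicit $k=1$ check I would work locally over a chart $U_{\alpha_0\ldots\alpha_p}\times\Delta^p$ in the $U_{\alpha_0}$-trivialisation, where \cref{remark:expression-for-barycentric-connection} gives $\kappa(\nabla_p^\mu) = \d\overline{\omega}_p + \overline{\omega}_p\cdot\overline{\omega}_p$. Applying the simplicial differential and using $\d^2=0$ to annihilate the $\d\overline{\omega}_p$ term reduces everything to showing $\tr\,\d(\overline{\omega}_p\cdot\overline{\omega}_p) = 0$, i.e. $\tr\,\d\big(\sum_{i,j} t_j t_i\,\omega_{\alpha_0\alpha_j}\omega_{\alpha_0\alpha_i}\big)=0$. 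I would then expand this via the Koszul--Leibniz rule into a simplicial part (carrying $t_i\,\d t_j$ and $t_j\,\d t_i$) and a holomorphic part (carrying $\d_X$ of the $\omega$'s), substitute $\d\omega_{\alpha_0\alpha_i} = -\omega_{\alpha_0\alpha_i}^2$ from \cref{lemma:d-omega-is-minus-omega-squared} into the latter, and finally use the signed cyclicity of the trace from \cref{lemma:d-tr-omega-is-zero} to rewrite every summand so that it is antisymmetric under the interchange $i\leftrightarrow j$ while agreeing on the diagonal $i=j$; the sum over all $i,j$ then vanishes.

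The main obstacle is entirely one of sign bookkeeping rather than of ideas: one must correctly apply $\d = \d_X + (-1)^{|\cdot|}\d_\Delta$ to products of the $0$-forms $t_i$ with wedges of holomorphic $1$-forms, and then track the sign picked up each time an endomorphism factor is cyclically permuted inside the trace. Once those signs are pinned down, the whole computation is governed by the single symmetry $(i,j)\leftrightarrow(j,i)$, and the cancellation is automatic.
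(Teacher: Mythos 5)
Your proposal is correct and follows essentially the same route as the paper: the general case is delegated to the simplicial Chern--Weil machinery of \cref{subsection:simplicial-chern-weil} (your more detailed chain of citations through \cref{corollary:invariant-polynomial-of-admissible-form-is-simplicial} and item~5 of \cref{lemma:chern-weil-lemmas} is exactly what the paper's brief reference unpacks to), and the explicit $k=1$ verification is the same local computation with $\kappa(\nabla_p^\mu)=\d\overline{\omega}_p+\overline{\omega}_p\cdot\overline{\omega}_p$, the substitution $\d\omega_{\alpha_0\alpha_i}=-\omega_{\alpha_0\alpha_i}^2$, and cancellation via signed cyclicity of the trace under the symmetry $i\leftrightarrow j$.
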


    \subsection{The first simplicial Atiyah class}

        The first simplicial (exponential) Atiyah class is given by
        \begin{equation}
            \simpexpat{1}_E = \Bigg\{-\sum_{i=1}^p\omega_{\alpha_0\alpha_i}\otimes\d t_i - \sum_{i=1}^pt_i\omega_{\alpha_0\alpha_i}^2 + \sum_{i,j=1}^p t_jt_i\omega_{\alpha_0\alpha_j}\omega_{\alpha_0\alpha_i}\Bigg\}_{p\in\mathbb{N}}
        \end{equation}
        by using the barycentric connection, and \cref{remark:expression-for-barycentric-connection}.

        As explained in \cref{remark:fibre-integration-only-depends-on-certain-parts}, the fibre integral of $\simpexpat{1}_E$ depends only on the $(2,0)$, $(1,1)$, and $(0,2)$ parts, but here there is no $(0,2)$ part (i.e. there is no $\d t_j\d t_i$ term), and so
        \begin{align}
            \int_{\Delta^\bullet}\simpexpat{1}_E &= (-1)\int_{\Delta^1}-\sum_{i=1}^{p=1}\omega_{\alpha_0\alpha_i}\otimes\d t_i - \int_{\Delta^0}\sum_{i=1}^{p=0}t_i\omega_{\alpha_0\alpha_i}^2 + \int_{\Delta^0}\sum_{i,j=1}^{p=0} t_jt_i\omega_{\alpha_0\alpha_j}\omega_{\alpha_0\alpha_i}
        \nonumber
        \\  &= \int_{\Delta^1}\omega_{\alpha_0\alpha_1}\otimes\d t_1.
        \end{align}

        Continuing the calculation gives us the result that we expect: as in \cref{subsection:first-manual-atiyah-class}, we get that
        \begin{equation}
            \tr\int_{\Delta^\bullet}\simpexpat{1}_E = \tr\int_0^1\omega_{\alpha_0\alpha_1} \d t_1 = \underbrace{\tr(\omega_{\alpha_0\alpha_1})}_{p=1} = \tr\expat{1}_E,
        \end{equation}
        where we write that this term lives in degree~$p=1$ to remind us that the result is a Čech $1$-cocycle.

        \begin{remark}
            In general, as in the manual construction, the fibre integral of the $k$-th simplicial (exponential) Atiyah class will have terms in $\cech^{k-i}\left(\Omega_X^{k+i}\right)$ for $i=0,\ldots,k-1$, and so we make up for our notational laziness (namely, using $+$ to mean $\oplus$) by labelling the terms with their Čech degree.

            Note that, although the $(2k,0)$ part of $\simpexpat{k}_E$ is, in general, non-zero, when we fibre integrate we look at it on the 0-simplex (again, by \cref{remark:fibre-integration-only-depends-on-certain-parts}), and there it \emph{is} zero, since all the sums are trivially zero.
        \end{remark}

    \subsection{The second simplicial Atiyah class}

        Not forgetting the sign $\epsilon_2=-1$ from \cref{definition:simplicial-atiyah-classes}, we have that
        \begin{equation}
            \simpexpat{2}_E = \Bigg\{-\Bigg(-\sum_{i=1}^p\omega_{\alpha_0\alpha_i}\otimes\d t_i - \sum_{i=1}^p t_i\omega_{\alpha_0\alpha_i}^2 + \sum_{i,j=1}^p t_j t_i\omega_{\alpha_0\alpha_j}\omega_{\alpha_0\alpha_i}\Bigg)^2\Bigg\}_{p\in\mathbb{N}}
        \end{equation}
        but we also know that the only parts that will be non-zero after fibre integration are the $(2,2)$ parts on the $2$-simplex, and the $(3,1)$ parts on the $1$-simplex.

        The only $(2,2)$ part comes from the first half of the $(\d\overline{\omega}_2)^2$ term, which gives us
        \begin{align}
            \tr\int_{\Delta^2}\simpexpat{2}_E
            &= \tr(-1)^{2\cdot2}\int_{\Delta^2}-\left(-\sum_{i=1}^2\omega_{\alpha_0\alpha_i}\otimes\d t_i\right)^2
        \nonumber
        \\  &= \tr\int_{\Delta^2}-\left(\sum_{i,j=1}^2(\omega_{\alpha_0\alpha_j}\otimes\d t_j)\cdot(\omega_{\alpha_0\alpha_i}\otimes\d t_i)\right)
        \nonumber
        \\  &= \tr\int_{\Delta^2}\sum_{i,j=1}^2\omega_{\alpha_0\alpha_j}\omega_{\alpha_0\alpha_i}\otimes\d t_j\d t_i
        \nonumber
        \\  &= \tr\int_{\Delta^2}\Big(\omega_{\alpha_0\alpha_1}^2\otimes(\d t_1)^2 + \omega_{\alpha_0\alpha_1}\omega_{\alpha_0\alpha_2}\otimes\d t_1\d t_2
        \nonumber
        \\  &\qquad\quad+ \omega_{\alpha_0\alpha_2}\omega_{\alpha_0\alpha_1}\otimes\d t_2\d t_1 + \omega_{\alpha_0\alpha_2}^2\otimes(\d t_2)^2\Big)
        \nonumber
        \\  &= \tr\int_{\Delta^2} \Big(\omega_{\alpha_0\alpha_1}\omega_{\alpha_0\alpha_2} - \omega_{\alpha_0\alpha_2}\omega_{\alpha_0\alpha_1}\Big)\otimes\d t_1\d t_2
        \nonumber
        \\  &= \tr\int_0^1\int_0^{1-t_2} \Big(\omega_{\alpha_0\alpha_1}\omega_{\alpha_0\alpha_2} - \omega_{\alpha_0\alpha_2}\omega_{\alpha_0\alpha_1}\Big)\otimes\d t_1\d t_2
        \nonumber
        \\  &= \frac12\tr\Big(\omega_{\alpha_0\alpha_1}\omega_{\alpha_0\alpha_2} - \omega_{\alpha_0\alpha_2}\omega_{\alpha_0\alpha_1}\Big)
        \nonumber
        \\  &= \frac{1}{2}\cdot2\cdot\tr\omega_{\alpha_0\alpha_1}\omega_{\alpha_0\alpha_2}
        \nonumber
        \\  &= \tr\omega_{\alpha_0\alpha_1}(\omega_{\alpha_0\alpha_1}+\omega_{\alpha_1\alpha_2})
        \nonumber
        \\  &= \tr\omega_{\alpha_0\alpha_1}\omega_{\alpha_1\alpha_2}.
        \end{align}
        This means that, so far, we have
        \begin{equation*}
            \tr\int_{\Delta^\bullet}\simpexpat{2}_E = \underbrace{?}_{p=1} + \underbrace{\tr(\omega_{\alpha_0\alpha_1}\omega_{\alpha_1\alpha_2})}_{p=2}.
        \end{equation*}

        For the $(3,1)$ part, we work on the 1-simplex and get
        \begin{align*}
            \tr\int_{\Delta^1}\simpexpat{2}_E
            &= \tr(-1)^{3\cdot1}\int_{\Delta^1}-\,\Bigg(-\sum_{i,j=1}^1 (\omega_{\alpha_0\alpha_j}\otimes\d t_j)\cdot(-t_i\omega_{\alpha_0\alpha_i}^2)
        \\  &\qquad\qquad\qquad\qquad- \sum_{i,j=1}^1 (-t_j\omega_{\alpha_0\alpha_j}^2)\cdot(\omega_{\alpha_0\alpha_i}\otimes\d t_i)
        \\  &\qquad\qquad\qquad\qquad- \sum_{i,j,k=1}^1 (\omega_{\alpha_0\alpha_k}\otimes\d t_k)\cdot(t_j t_i\omega_{\alpha_0\alpha_j}\omega_{\alpha_0\alpha_i})
        \\  &\qquad\qquad\qquad\qquad- \sum_{i,j,k=1}^1 (t_k t_j\omega_{\alpha_0\alpha_k}\omega_{\alpha_0\alpha_j})\cdot(\omega_{\alpha_0\alpha_i}\otimes\d t_i)\Bigg)
        \\  &= \tr\int_0^1 2\omega_{\alpha_0\alpha_1}^3(t_1-t_1^2)\d t_1
        \\  &= \frac13\tr\omega_{\alpha_0\alpha_1}^3.
        \end{align*}

        Finally then, we have
        \begin{equation}
            \tr\int_{\Delta^\bullet}\simpexpat{2}_E = \underbrace{\frac13\tr\omega_{\alpha_0\alpha_1}^3}_{p=1} + \underbrace{\tr(\omega_{\alpha_0\alpha_1}\omega_{\alpha_1\alpha_2})}_{p=2} = \tr\expat{2}_E
        \end{equation}
        which again agrees with the result of \cref{subsection:second-manual-atiyah-class}.

    \subsection{The third simplicial Atiyah class}\label{subsection:third-simplicial-atiyah-class}

        \begin{remark}
            There is a subtlety in the calculations when we reach the third simplicial Atiyah class due to our choice of conventions for Čech cocycles: we don't assume skew-symmetry of cocycles (i.e. that exchanging two indices changes sign), but it is true that skew-symmetrisation of cocycles is a quasi-isomorphism, and so doesn't change the resulting cohomology class.
            If we had worked with skew-symmetric Čech cocycles from the start then this calculation would seem somewhat simpler.

            In particular, the first two (exponential) Atiyah classes agree with those that we manually constructed in \cref{subsection:first-manual-atiyah-class,subsection:second-manual-atiyah-class} on the nose, whereas for $k\geqslant3$ we will have equality only in cohomology.
        \end{remark}

        \begin{definition}
            We write $\ss{p}$ to mean the \define{skew-symmetrisation} of a Čech $p$-cochain, so that
            \[
                (\ss{p}c)_{\alpha_0\ldots\alpha_p}
                =
                \frac{1}{(p+1)!}\sum_{\sigma\in\sym{p+1}}\sgn{\sigma}c_{\alpha_{\sigma(0)}\ldots\alpha_{\sigma(p)}}.
            \]
            where $\sgn{\sigma}$ denotes the sign of $\sigma$.
        \end{definition}

        So, we begin the calculation of $\simpexpat{3}_E$.
        Writing $\mu_i$ to mean $\omega_{\alpha_0\alpha_i}$, we start by calculating the $(3,3)$ part as follows:
        \begin{align}
            \tr\int_{\Delta^3}\simpexpat{3}_E
            &= \tr(-1)^{3\cdot3}\int_{\Delta^3}-\sum_{i,j,k=1}^3 (-\mu_k\otimes\d t_k)\cdot(-\mu_j\otimes\d t_j)\cdot(-\mu_i\otimes\d t_i)
        \nonumber
        \\  &= \tr\int_{\Delta^3}\sum_{i,j,k=1}^{p=3} \mu_k\mu_j\mu_i\otimes\d t_k\d t_j\d t_i
        \nonumber
        \\  &= \tr\int_{\Delta^3}\sum_{\sigma\in S_3}\sgn{\sigma}\,\mu_{\sigma(1)}\mu_{\sigma(2)}\mu_{\sigma(3)}\otimes\d t_1\d t_2\d t_3
        \nonumber
        \\  &= \frac16 \tr\sum_{\sigma\in\sym{3}}\sgn{\sigma}\,\mu_{\sigma(1)}\mu_{\sigma(2)}\mu_{\sigma(3)}
        \nonumber
        \\  &= \frac12\tr(\mu_1\mu_2\mu_3 - \mu_1\mu_3\mu_2)
        \nonumber
        \\  &= \frac12\tr\Big(\omega_{\alpha_0\alpha_1}(\omega_{\alpha_0\alpha_1}+\omega_{\alpha_1\alpha_2})(\omega_{\alpha_0\alpha_1}+\omega_{\alpha_1\alpha_2}+\omega_{\alpha_2\alpha_3})
        \nonumber
        \\  &\qquad\quad -\omega_{\alpha_0\alpha_1}(\omega_{\alpha_0\alpha_1}+\omega_{\alpha_1\alpha_2}+\omega_{\alpha_2\alpha_3})(\omega_{\alpha_0\alpha_1}+\omega_{\alpha_1\alpha_2})\Big)
        \nonumber
        \\  &= \frac12\tr\Big(\omega_{\alpha_0\alpha_1}\omega_{\alpha_1\alpha_2}\omega_{\alpha_2\alpha_3} - \omega_{\alpha_0\alpha_1}\omega_{\alpha_2\alpha_3}\omega_{\alpha_1\alpha_2}\Big).
        \end{align}
        But note that both of the terms in this last expression skew-symmetrise to the same thing, modulo a minus sign:
        \begin{equation}
            \sum_{\tau\in\sym{4}}\sgn{\tau}\, \omega_{\alpha_{\tau(0)}\alpha_{\tau(1)}}\omega_{\alpha_{\tau(1)}\alpha_{\tau(2)}}\omega_{\alpha_{\tau(2)}\alpha_{\tau(3)}} = -\sum_{\tau\in\sym{4}}\sgn{\tau}\, \omega_{\alpha_{\tau(0)}\alpha_{\tau(1)}}\omega_{\alpha_{\tau(2)}\alpha_{\tau(3)}}\omega_{\alpha_{\tau(1)}\alpha_{\tau(2)}}.
        \end{equation}
        Since skew-symmetrisation doesn't change the class in cohomology, we see that
        \begin{equation}
            \left[\tr\int_{\Delta^\bullet}\simpexpat{3}_E\right] = \big[\underbrace{?}_{p=1} + \underbrace{?}_{p=2} + \underbrace{\tr\omega_{\alpha_0\alpha_1}\omega_{\alpha_1\alpha_2}\omega_{\alpha_2\alpha_3}}_{p=3}\big].
        \end{equation}

        The $(4,2)$ part (including the sign $\epsilon_3=-1$) is given by
        \begin{equation}
            X^2Y + XYX + YX^2 - X^2Z - XZX - ZX^2
        \end{equation}
        where
        \begin{equation}
            X = -\sum_{i=1}^2\mu_i\otimes\d t_i\qquad
            Y = \sum_{i=1}^2 t_i\mu_i^2\qquad
            Z = \sum_{i,j=1}^2 t_j t_i\mu_j\mu_i.
        \end{equation}
        Using that $\int_{\Delta^2}\d t_1\d t_2=\int_0^1\int_0^{1-t_2}\d t_1\d t_2$, we calculate that
        \begin{enumerate}[(i)]
            \item $\int_{\Delta^2}t_1\d t_1\d t_2=\int_{\Delta^2}t_2\d t_1\d t_2=\frac16$;
            \item $\int_{\Delta^2}t_1^2\d t_1\d t_2=\int_{\Delta^2}t_2^2\d t_1\d t_2=\frac{1}{12}$;
            \item $\int_{\Delta^2}t_1t_2\d t_1\d t_2=\frac{1}{24}$.
        \end{enumerate}
        whence
        \begin{gather}
            \tr(-1)^{2\cdot2}\int_{\Delta^2}X^2(Y-Z) + X(Y-Z)X + (Y-Z)X^2
        \nonumber
        \\  = -\frac12\tr\mu_1^3\mu_2 - \frac12\tr\mu_1\mu_2^3 + \frac14\tr\mu_1\mu_2\mu_1\mu_2
        \nonumber
        \\  = -\frac14\tr\big((\omega_{\alpha_0\alpha_1}\omega_{\alpha_1\alpha_2})^2\big) - \frac12\tr\big(\omega_{\alpha_0\alpha_1}^3\omega_{\alpha_1\alpha_2} - \omega_{\alpha_0\alpha_1}\omega_{\alpha_1\alpha_2}^3\big).
        \end{gather}
        Comparing this to \cref{subsection:third-manual-atiyah-class}, we see that we have the same, except for a missing $+\frac12\tr\left(\omega_{\alpha_0\alpha_1}^2\omega_{\alpha_1\alpha_2}^2\right)$ term.
        But this missing term skew-symmetrises to zero, since it is invariant under the permutation that swaps $0$ and $2$.
        Thus the $(4,2)$ part is exactly what we wanted, and
        \begin{equation}
            \left[\tr\int_{\Delta^\bullet}\simpexpat{3}_E\right] = \Bigg[\underbrace{?}_{p=1}
            +\underbrace{%
                \begin{array}{c}
                    - \frac14\tr\big((\omega_{\alpha_0\alpha_1}\omega_{\alpha_1\alpha_2})^2\big)\\
                    - \frac12\tr\big(\omega_{\alpha_0\alpha_1}^3\omega_{\alpha_1\alpha_2}\big)\\
                    - \frac12\tr\big(\omega_{\alpha_0\alpha_1}\omega_{\alpha_1\alpha_2}^3\big)\\
                    + \frac12\tr\left(\omega_{\alpha_0\alpha_1}^2\omega_{\alpha_1\alpha_2}^2\right)
                \end{array}
            }_{p=2}
            + \underbrace{\tr\omega_{\alpha_0\alpha_1}\omega_{\alpha_1\alpha_2}\omega_{\alpha_2\alpha_3}}_{p=3}\Bigg].
        \end{equation}

        Finally, the $(5,1)$ part is
        \begin{equation}
            \tr(-1)^{5\cdot1}\int_{\Delta^1}\simpexpat{3}_E
            = \tr\int_0^1-(3t_1^2+ 3t_1^4 - 6t_1^3)\mu_1^5\otimes\d t_1
            = -\frac{1}{10}\omega_{\alpha_0\alpha_1}^5
        \end{equation}
        which agrees exactly with the manual construction from \cref{subsection:third-manual-atiyah-class}.
        Thus
        \begin{align}
            \left[\tr\int_{\Delta^\bullet}\simpexpat{3}_E\right]
            &=
            \Bigg[\underbrace{-\frac{1}{10}\omega_{\alpha_0\alpha_1}^5}_{p=1}
            +\underbrace{%
                \begin{array}{c}
                    - \frac14\tr\big((\omega_{\alpha_0\alpha_1}\omega_{\alpha_1\alpha_2})^2\big)\\
                    - \frac12\tr\big(\omega_{\alpha_0\alpha_1}^3\omega_{\alpha_1\alpha_2}\big)\\
                    - \frac12\tr\big(\omega_{\alpha_0\alpha_1}\omega_{\alpha_1\alpha_2}^3\big)\\
                    + \frac12\tr\left(\omega_{\alpha_0\alpha_1}^2\omega_{\alpha_1\alpha_2}^2\right)
                \end{array}
            }_{p=2}
            + \underbrace{\tr\omega_{\alpha_0\alpha_1}\omega_{\alpha_1\alpha_2}\omega_{\alpha_2\alpha_3}}_{p=3}\Bigg]
        \nonumber
        \\  &= \left[\tr\expat{3}_E\right].
        \end{align}

    \subsection{Agreement with the manual construction}

        \begin{theorem}\label{theorem:simplicial-atiyah-agrees-with-manual-atiyah}
            The degree-$(k,k)$ term in the trace of fibre integral of the $k$-th simplicial {exponential} Atiyah class agrees with the $k$-th {exponential} Atiyah class, up to skew-symmetrisation.
            That is,
            \[
                \ss{k}\left(
                    \tr\int_{\Delta^\bullet}\simpexpat{k}_E
                \right)^{(k,k)}\!\!\!\!\!\!
                =
                \ss{k}\tr\left(
                    \expat{k}_E
                \right)
                \in \cech^k_\cover\left(\Omega_X^k\right).
            \]
            \begin{proof}
                First we rewrite the left-hand side.
                Generalising the results of \cref{subsection:third-simplicial-atiyah-class}, we can write the term coming from fibre integration as
                \[
                    \frac{1}{k!} \sum_{\sigma\in\sym{k}}
                        \sgn{\sigma}\mu_{\sigma(1)} \cdots \mu_{\sigma(k)}
                    \overset{\ss{k}}{\longmapsto}
                    \frac{1}{k!(k+1)!} \sum_{\tau\in \sym{k+1}}
                        \sum_{\sigma\in\sym{k}}
                            \sgn{\tau\sigma}\omega_{\tau(0)\tau\sigma(1)} \cdots \omega_{\tau(0)\tau\sigma(k)}
                \]
                where $\sym{k}\leqslant\sym{k+1}$ acts on $\{0,1,\ldots,k\}$ by fixing $0$.
                But then, since $\sigma(0)=0$, we can rewrite this as
                \[
                    \frac{1}{k!(k+1)!} \sum_{\tau\in S_{k+1}}
                        \sum_{\sigma\in S_{k}}
                            \sgn{(\tau\sigma)}\omega_{\tau\sigma(0)\tau\sigma(1)} \cdots \omega_{\tau\sigma(0)\tau\sigma(k)}.
                \]
                Now we can use the fact that multiplication by an element of $\sym{k}\leqslant \sym{k+1}$ is an automorphism to perform a change of variables, giving us
                \[
                    \frac{1}{(k+1)!}\sum_{\eta\in S_{k+1}}\sgn{(\eta)}\,\omega_{\eta(0)\eta(1)}\cdots\omega_{\eta(0)\eta(k)}
                \]
                which is (trivially, since $\omega_{ii}=0$) equal to
                \[
                    \frac{1}{(k+1)!}\sum_{\eta\in S_{k+1}}\sgn{(\eta)}\,\prod_{i=1}^k\big(\omega_{\eta(0)\eta(i)}-\omega_{\eta(0)\eta(0)}\big).
                \]

                Next we rewrite the right-hand side.
                The skew-symmetrisation is simply
                \begin{align*}
                    \ss{k}\tr\left(
                        \expat{k}_E
                    \right)
                    &=
                    \frac{1}{(k+1)!} \sum_{\eta\in \sym{k+1}}
                        \sgn{\eta}\, \omega_{\eta(0)\eta(1)} \omega_{\eta(1)\eta(2)} \cdots \omega_{\eta(k-1)\eta(k)}
                \\  &= \frac{1}{(k+1)!} \sum_{\eta\in \sym{k+1}}
                        \sgn{\eta}\, \omega_{\eta(0)\eta(1)}(\omega_{\eta(0)\eta(2)}-\omega_{\eta(0)\eta(1)}) \cdots (\omega_{\eta(0)\eta(k)}-\omega_{\eta(0)\eta(k-1)})
                \\  &= \frac{1}{(k+1)!} \sum_{\eta\in \sym{k+1}}
                        \sgn{\eta}\,
                            \prod_{i=1}^k
                                \big(\omega_{\eta(0)\eta(i)}-\omega_{\eta(0)\eta(i-1)}\big).
                \end{align*}

                Now we prove equality.
                Since, \emph{for each fixed $\eta$}, there are no relations satisfied between the $\omega_{\eta(0)\eta(i)}$, we have two homogeneous degree~$k$ polynomials in $(k+1)$ free non-commutative variables.
                To emphasise the fact the following argument is purely abstract, we write $x_i=\omega_{0i}$ and define an action of $S_{k+1}$ on the $x_i$ by $x_{\eta(i)}=\omega_{\eta(0)\eta(i)}$.
                So showing that the left- and right-hand sides are equal amounts to showing that
                \[
                    A :=
                    \sum_{\eta\in \sym{k+1}}
                        \sgn{\eta}\,
                            \prod_{i=1}^k
                                \big(x_{\eta(i)}-x_{\eta(0)}\big)
                    =
                    \sum_{\eta\in \sym{k+1}}
                        \sgn{\eta}\,
                            \prod_{i=1}^k
                                \big(x_{\eta(i)}-x_{\eta(i-1)}\big)
                    =: B.
                \]

                Write $E$ to mean the $\mathbb{Q}$-linear span of degree-$k$ monomials in the $(k+1)$ free non-commutative variables $x_0,\ldots,x_k$, and let $\sigma_{p,q}\in S_{k+1}$ be the transposition that swaps $p$ with $q$.
                Then $\sigma_{p,q}$ gives an involution on $E$ by swapping $x_p$ with $x_q$, and thus $E\cong E_{p,q}(1)\oplus E_{p,q}(-1)$, where $E_{p,q}(\lambda)$ is the eigenspace corresponding to the eigenvalue $\lambda$.

                Let $H_{p,q}$ be the $\mathbb{Q}$-linear subspace of $E$ spanned by monomials that contain at least one $x_p$ or $x_q$.
                This subspace is stable\footnote{If a monomial $X$ contains, say, one $x_p$, then $\sigma_{p,q}X$ contains one $x_q$.} under $\sigma_{p,q}$, and so this space also splits as $H_{p,q}\cong H_{p,q}(1)\oplus H_{p,q}(-1)$.
                Further, we have the inclusion $H_{p,q}(-1)\subseteq E_{p,q}(-1)$.
                But if $X\in E_{p,q}(-1)$, then, in particular, $X$ must contain at least one\footnote{If not, then the action of $\sigma_{p,q}$ would be trivial and $X$ would lie in $E_{p,q}(1)$.} $x_p$ or $x_q$, so $X\in H_{p,q}$, whence $X\in H_{p,q}(-1)$.
                Thus $E_{p,q}(-1)=H_{p,q}(-1)$.

                The intersection $H$ of the $H_{p,q}$ over all \emph{distinct} pairs $(p,q)\in\{0,\ldots,k\}\times\{0,\ldots,k\}$ is the $\mathbb{Q}$-linear span of all monomials containing all but one of the $x_i$ (and, in particular, containing $k$ distinct $x_i$).
                But since $H_{p,q}(-1)=E_{p,q}(-1)$, we see that the intersection $E(-1)$ of all the $E_{p,q}(-1)$ is equal to $H(-1)$.
                Now both $A$ and $B$ are in $E_{p,q}(-1)$ for all $p,q$ (since the sign of $\omega_{p,q}$ is $-1$), and so $A,B\in E(-1)=H(-1)$.
                Since the coefficient of, for example, the $x_1\cdots x_k$ term is the same (and \emph{non-zero}\footnote{In fact, it is $1$, as seen by taking $\eta=\id$.}) in both $A$ and $B$ , it suffices to show that $H(-1)$ is one-dimensional.

                So let $X,Y\in H(-1)$ be monomials.
                Then each one contains $k$ distinct $x_i$, and so there exists some (unique) $\sigma\in S_{k+1}$ such that $\sigma X=\pm Y$.
                But, writing $\sigma=\sigma_{p_1,q_1}\cdots\sigma_{p_r,q_r}$, we know that $\sigma X=(-1)^r X$, whence $X=Y$, up to some sign (and so up to some scalar in $\mathbb{Z}$).
            \end{proof}
        \end{theorem}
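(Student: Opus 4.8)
The plan is to reduce the claimed identity to a purely algebraic one between two elements of a free non-commutative polynomial ring, and to settle that identity by a symmetry argument rather than by brute-force coefficient comparison (which, as the case $k=4$ already demonstrates, is intractable for general $k$).

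First I would bring both sides into a common shape. Generalising the computations of \cref{subsection:third-simplicial-atiyah-class}, the degree-$(k,k)$ part of the fibre integral is $\frac{1}{k!}\sum_{\sigma\in\sym{k}}\sgn{\sigma}\,\mu_{\sigma(1)}\cdots\mu_{\sigma(k)}$, with $\mu_i=\omega_{\alpha_0\alpha_i}$. Applying $\ss{k}$ introduces a sum over $\tau\in\sym{k+1}$, and the substitution $\eta=\tau\sigma$ (using that $\sym{k}\leqslant\sym{k+1}$ fixes $0$) collapses the double sum, the $k!$-fold redundancy cancelling the factor $1/k!$; after using $\omega_{ii}=0$ this yields $\frac{1}{(k+1)!}\sum_\eta\sgn{\eta}\prod_{i=1}^k(\omega_{\eta(0)\eta(i)}-\omega_{\eta(0)\eta(0)})$. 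For the right-hand side I start from $\tr\expat{k}_E=\omega_{\alpha_0\alpha_1}\cdots\omega_{\alpha_{k-1}\alpha_k}$, skew-symmetrise, and apply the additive cocycle relation $\omega_{\eta(i-1)\eta(i)}=\omega_{\eta(0)\eta(i)}-\omega_{\eta(0)\eta(i-1)}$ to each factor, obtaining $\frac{1}{(k+1)!}\sum_\eta\sgn{\eta}\prod_{i=1}^k(\omega_{\eta(0)\eta(i)}-\omega_{\eta(0)\eta(i-1)})$. Since for each fixed $\eta$ the entries satisfy no relations, writing $x_{\eta(i)}=\omega_{\eta(0)\eta(i)}$ turns both sides into polynomials in free non-commutative variables, and the theorem reduces to the abstract identity $A=B$ where $A=\sum_\eta\sgn{\eta}\prod_i(x_{\eta(i)}-x_{\eta(0)})$ and $B=\sum_\eta\sgn{\eta}\prod_i(x_{\eta(i)}-x_{\eta(i-1)})$.

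To prove $A=B$ I would observe that both are antisymmetric: substituting $\sigma_{p,q}x$ for $x$ and reindexing shows each transposition $\sigma_{p,q}$ acts by $-1$, so $A$ and $B$ both lie in the total sign-isotypic subspace $E(-1)=\bigcap_{p,q}E_{p,q}(-1)$ of the degree-$k$ part $E$ of the free algebra. The crux is to identify $E(-1)$. A monomial omitting both $x_p$ and $x_q$ is fixed by $\sigma_{p,q}$, hence lies in $E_{p,q}(1)$; so $E_{p,q}(-1)$ only sees monomials involving $x_p$ or $x_q$, and intersecting over all pairs confines $E(-1)$ to the antisymmetric part of the span of monomials that use $k$ of the $k+1$ variables, each exactly once. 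Such monomials form a single $\sym{k+1}$-orbit with trivial stabiliser, so their antisymmetrisation is one-dimensional, i.e. $\dim E(-1)=1$. Since both $A$ and $B$ have coefficient $1$ on $x_1\cdots x_k$ (take $\eta=\id$), they must agree.

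The main obstacle is this last identification, $\dim E(-1)=1$: the reductions to $A$ and $B$ are essentially forced by the cocycle relation and the pattern already visible for $k\leqslant3$, but $A=B$ itself admits no telescoping or induction, and direct expansion generates exponentially many monomials. The symmetry argument avoids all of this, so the genuine care lies in checking that $A$ and $B$ really do antisymmetrise (the reindexing must correctly produce the sign of $\sigma_{p,q}$) and that every repeated-variable monomial is annihilated --- which is precisely what pins both expressions into the one-dimensional alternating subspace.
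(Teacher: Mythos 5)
Your proposal is correct and follows essentially the same route as the paper's own proof: the same rewriting of the fibre-integral term and of $\tr\expat{k}_E$ via skew-symmetrisation and the cocycle relation, the same reduction to the identity $A=B$ between polynomials in free non-commutative variables, and the same sign-eigenspace argument showing that both sides lie in the one-dimensional alternating subspace spanned by the multilinear monomials, with agreement of the coefficient of $x_1\cdots x_k$ forcing equality. The only cosmetic difference is that you conclude the dimension count by noting the multilinear monomials form a single free $\sym{k+1}$-orbit (hence a copy of the regular representation, with one-dimensional sign part), whereas the paper phrases this as the existence of a unique permutation relating any two such monomials.
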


\section{From vector bundles to coherent sheaves}\label{section:from-vector-bundles-to-coherent-sheaves}

    \subsection{The whole story: a summary}\label{subsection:the-whole-story}

        Starting with some complex of coherent sheaves $\mathscr{F}^\anotherbullet\in\gcohX$, with the category of complexes of coherent sheaves being defined as in \cite[§3.1]{Hosgood2021}, we take Green's resolution, which produces a complex $\mathcal{E}^{\bullet,\anotherbullet}$ of vector bundles on the nerve (of some possible refined cover) resolving $i^*\mathscr{F}^\anotherbullet$.
        Then, also by Green's resolution, we get simplicial connections on each of the $\mathcal{E}^{\bullet,i}$, and \cite[Theorem~4.29]{Hosgood2021} tells us that these are admissible.\footnote{Indeed, they are all of the form of barycentric connections, i.e. \emph{generated in degree zero} (cf. \cite[§4.5]{Hosgood2021}).}
        So, applying the generalised invariant polynomial $P_\bullet = \{\tr\circ\mu_n\}_{n\in\mathbb{N}}$ (where $\mu_n$ is the multiplication map that sends an $n$-fold tensor product of endomorphisms to the endomorphism given by composition (resp. wedge product)) to the curvatures of each of the simplicial connections, we obtain the simplicial exponential (resp. standard) Atiyah classes of each $\mathcal{E}^{\bullet,i}$.
        Using the alternating-sum convention, this gives us the simplicial exponential (or standard) Atiyah class of $\mathscr{F}$.
        Finally, by fibre integration of the trace of these classes, we obtain closed classes in the Čech-de~Rham bicomplex, and thus classes in de~Rham (or even tDR) cohomology.

        \begin{lemma}
            This construction is independent of the choice of twisting cochain (and thus cover) and of local connections.
        \end{lemma}
        \begin{proof}
            The argument for this is of exactly the same form as the proof of \cite[Theorem~2.4]{Green1980}, where we pass to the disjoint union of the covers and can inductively construct a twisting cochain which is a common extension of the two twisting cochains from which we start.
            Although Green only gives the proof for the $(p,p)$-term, his argument applies equally well to the full Čech-de~Rham representative $(C^{p,p},C^{p-1,p+1},\ldots,C^{0,2p})$ without modification.
        \end{proof}

        \begin{remark}
            Of particular importance is \cite[Lemma~4.39]{Hosgood2021}, which tells us that the homotopy colimit (over refinements of covers) of the localisation of the category $\sgreenzeroX$ of Green complexes is equivalent to the homotopy colimit of the localisation of the category of complexes $\greenzeroX$ which are locally quasi-isomorphic to Green complexes.
            In particular, we only know how to construct admissible simplicial connections for objects of the former, and so it is necessary that our construction of $\mathcal{E}^{\bullet,\anotherbullet}$ in the above is an object of $\sgreenzeroX$.
        \end{remark}

    \subsection{General properties}

        \begin{lemma}\label{lemma:agrees-for-line-bundles}
            Let $\mathcal{L}$ be a holomorphic line bundle with transition maps $\{g_{\alpha\beta}\}\in\check{\mathcal{C}}^1(\mathbb{C}^\times)$.
            Then
            \begin{equation*}
                \tr\int_{|\Delta^\bullet|}\simpexpat{1}_\mathcal{L} = \mathrm{c}_1(\mathcal{L}),
            \end{equation*}
            where $\mathrm{c}_1(\mathcal{L})$ denotes the first Chern class of the line bundle given by the connecting homomorphism from the Picard group in the long exact sequence associated to the \define{exponential sheaf sequence}
            \begin{equation*}
                0
                \to
                2\pi i\mathbb{Z}
                \hookrightarrow
                \OO_X
                \xtwoheadrightarrow{\exp}
                \OO_X^\times
                \to
                0
            \end{equation*}
            (that is, the classical definition, as given in e.g. \cite[Definition~2.2.13]{Huybrechts2005}).
            \begin{proof}
                We have already calculated in \cref{equation:omega-alpha-beta-explicitly} that the left-hand side is equal to $M_{\alpha\beta}^{-1}\d M_{\alpha\beta}$.
                That the right-hand side gives the same result can be found in e.g. the proof of the Proposition in \cite[Chapter~1, §1, \emph{Chern Classes of Line Bundles}, p.~141]{Griffiths&Harris1994}.
            \end{proof}
        \end{lemma}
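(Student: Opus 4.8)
The plan is to compute both sides explicitly as classes in $\HH^2(X,\mathbb{C})$ and then to connect them by a short zigzag in the Čech--de Rham bicomplex $\Tot^\bullet\cech^\bullet(\Omega_X^\bullet)$. First I would reduce the left-hand side: since $\mathcal{L}$ has rank one the trace is the identity, and the computation of the first simplicial Atiyah class carried out above gives $\tr\int_{|\Delta^\bullet|}\simpexpat{1}_\mathcal{L}=\tr(\omega_{\alpha\beta})$ as a Čech $1$-cocycle in $\cech^1(\Omega_X^1)$. Invoking \cref{equation:omega-alpha-beta-explicitly} and setting $M_{\alpha\beta}=g_{\alpha\beta}$, this is the cocycle $\{g_{\alpha\beta}^{-1}\,\d g_{\alpha\beta}\}=\{\d\log g_{\alpha\beta}\}$, sitting in bidegree $(1,1)$.

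Next I would unwind the right-hand side. By definition $\mathrm{c}_1(\mathcal{L})$ is the image of $[\{g_{\alpha\beta}\}]\in\HH^1(X,\OO_X^\times)$ under the connecting homomorphism: one lifts each $g_{\alpha\beta}$ to a local branch $\log g_{\alpha\beta}\in\cech^1(\OO_X)$ and applies the Čech coboundary, so that $\mathrm{c}_1(\mathcal{L})$ is represented by the $2$-cocycle $\{(\check\delta\log g)_{\alpha\beta\gamma}\}$, which takes values in the constant sheaf $2\pi i\mathbb{Z}\subseteq\mathbb{C}$ (because $\exp$ of it is $g_{\alpha\beta}g_{\beta\gamma}g_{\alpha\gamma}^{-1}=1$). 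This lives in bidegree $(2,0)$.

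The bridge between the two is the single Čech $1$-cochain $\{\log g_{\alpha\beta}\}$ in bidegree $(1,0)$: its de Rham differential is precisely the left-hand cocycle $\{\d\log g_{\alpha\beta}\}$, while its Čech differential is precisely the right-hand cocycle $\{(\check\delta\log g)_{\alpha\beta\gamma}\}$. Hence, under the total differential $D=\check\delta\pm\d$, the two cocycles differ by $D(\{\log g_{\alpha\beta}\})$ and therefore define the same class in $\HH^2\Tot^\bullet\cech^\bullet(\Omega_X^\bullet)\cong\HH^2(X,\mathbb{C})$ (\cref{remark:we-can-calculate-in-cech}). Note that no separate $2\pi i$ bookkeeping is needed: both differentials are applied to the \emph{same} lift $\log g_{\alpha\beta}$, so the $2\pi i$ appearing in the exponential sequence is carried along automatically.

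The main obstacle is bookkeeping rather than anything conceptual. The logarithm is multivalued, so a genuinely single-valued lift $\{\log g_{\alpha\beta}\}\in\cech^1(\OO_X)$ only exists after passing to a refinement with simply connected overlaps; this is harmless, since the Stein cover already computes the cohomology and the construction is compatible with refinement (\cref{remark:we-can-calculate-in-cech}). The only genuine care required is to track the sign in $D$ together with the simplicial orientation convention recorded in \cref{remark:fibre-integration-only-depends-on-certain-parts}, so that the equality holds on the nose rather than merely up to sign. As a cross-check, the classical computation of $\mathrm{c}_1$ of a line bundle in \cite[Chapter~1, §1, p.~141]{Griffiths&Harris1994} produces exactly the representative $\{\d\log g_{\alpha\beta}\}$, confirming the match.
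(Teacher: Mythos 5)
Your proposal is correct and takes essentially the same route as the paper: you reduce the left-hand side to $\{\d\log g_{\alpha\beta}\}$ via \cref{equation:omega-alpha-beta-explicitly} exactly as the paper does, and your zigzag through the Čech $1$-cochain $\{\log g_{\alpha\beta}\}$ in the Čech--de Rham bicomplex is precisely the content of the Griffiths--Harris argument that the paper cites for the right-hand side. The only difference is that you inline the details the paper outsources to that citation --- passing to a refinement on which branches of $\log g_{\alpha\beta}$ exist, and noting that the paper's normalisation $0\to 2\pi i\mathbb{Z}\to\OO_X\to\OO_X^\times\to 0$ absorbs the $2\pi i$ --- which is a worthwhile, and correctly handled, unpacking.
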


        \begin{lemma}\label{lemma:functorial-under-pullbacks}
            Let $f\colon Y\to X$ be a morphism of complex-analytic manifolds, and let $\mathscr{F}$ be a coherent sheaf on $X$.
            Then
            \begin{equation*}
                f^*\left(
                    \int_{|\Delta^\bullet|} \tr \simpexpat{k}_{\mathscr{F}}
                \right)
                =
                \int_{|\Delta^\bullet|} \tr \simpexpat{k}_{f^*\mathscr{F}}
            \end{equation*}
            for all $k\in\mathbb{N}$, where $f^*$ denotes the \emph{derived} pullback.
            \begin{proof}
                This is basically the combination of the following facts: the derived pullback is exact; the simplicial Atiyah class is defined by taking a resolution; the derived pullback of a locally free sheaf agrees with the non-derived pullback; the non-simplicial Atiyah class of the non-derived pullback of a locally free sheaf is exactly the pullback of the non-simplicial Atiyah class of the locally free sheaf.
                We spell out how to join up these facts in slightly more detail below.

                Since the derived pullback is exact, we know that, given Green's resolution $\mathcal{E}^{\bullet,\anotherbullet}$ of $\mathscr{F}^\bullet = (\nerve{\bullet}\to X)^*\mathscr{F}$ by vector bundles on the nerve, the derived pullback $f^*\mathcal{E}^{\bullet,\anotherbullet}$ is a resolution for $f^*\mathscr{F}^\bullet$.
                But the derived pullback on locally free objects agrees with the non-derived pullback (since we are tensoring with something locally free), and so $f^*\mathcal{E}^{\bullet,\anotherbullet}$ can be calculated by the non-derived pullback.
                Now we can use the fact\footnote{We are working with vector bundles, which are locally free, and so, as previously mentioned, the derived pullback is just exactly the non-derived pullback. There is some subtlety however, since we are using the word ``pullback'' to mean a few different things here. When we talk about pulling back the Atiyah class of $E$, we mean first applying the pullback of sheaves, to get some class in $\HH^1(X,f^*\Omega_X^1\otimes\footnotesizesheafend(f^*E))$ (using the fact that pullbacks commute with $\footnotesizesheafend$ for finite-dimensional locally free sheaves), and then applying the canonical map $f^*\Omega_X^1\to\Omega_Y^1$ (which is what we really mean when we talk about pulling back forms). The fact that this construction sends $\at_E$ to $\at_{f^*E}$ follows from the fact that short exact sequences are distinguished triangles, and so specifying a morphism between the first and last (non-zero) terms of two SESs extends uniquely to a morphism of the SESs (in that we get a unique morphism between the middle terms of the two SESs such that everything commutes).} that, for a single vector bundle, the Atiyah class of the pullback is the pullback of the Atiyah class; for a single vector bundle on the nerve, the simplicial Atiyah class is determined entirely by the $\omega_{\alpha_0\alpha_i}$ (which represent the non-simplicial Atiyah class).
                Combining all the above, we see that all the forms defining the simplicial Atiyah class of the derived pullback of $\mathscr{F}$ are exactly the pullbacks of the forms defining the simplicial Atiyah class of $\mathscr{F}$.
                In particular, then, taking the trace and then fibre integrating (both of which commute with the pullback of forms on $X$) gives us the required result.
            \end{proof}
        \end{lemma}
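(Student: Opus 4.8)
The plan is to reduce the statement to the functoriality of the ordinary (non-simplicial) Atiyah class of a single vector bundle, and then to propagate this functoriality through each of the constructions that build $\simpexpat{k}_{\mathscr{F}}$ out of that data — namely Green's resolution, the barycentric connection and its curvature, the trace, and Dupont fibre integration — checking at each stage that (derived) pullback commutes with the operation in question.

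First I would unwind the definition: $\simpexpat{k}_{\mathscr{F}}$ is obtained by choosing Green's resolution $\mathcal{E}^{\bullet,\anotherbullet}$ of $\mathscr{F}^\bullet=(\nerve{\bullet}\to X)^*\mathscr{F}$ by vector bundles on the nerve, equipping each $\mathcal{E}^{\bullet,i}$ with a barycentric connection, and taking the alternating sum of the traces of the resulting simplicial Atiyah classes. The first key step is that the derived pullback $f^*$ is exact, so it carries this resolution to a resolution of $f^*\mathscr{F}^\bullet$; and since each $\mathcal{E}^{\bullet,i}$ is locally free, the derived pullback agrees there with the ordinary sheaf-theoretic pullback. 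Consequently the entire computation of $\simpexpat{k}_{f^*\mathscr{F}}$ may be carried out using the honest pullback of the $\mathcal{E}^{\bullet,i}$.

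The heart of the argument is then the functoriality of the Atiyah class of a single vector bundle: for locally free $E$ one has $f^*\at_E=\at_{f^*E}$, where pulling back the class means first pulling back in $\HH^1\bigl(X,\sheafend(E)\otimes\Omega_X^1\bigr)$ and then applying the canonical map $f^*\Omega_X^1\to\Omega_Y^1$. I would establish this by noting that a short exact sequence is a distinguished triangle, so a morphism between the outer terms of the jet sequences of $E$ and $f^*E$ extends uniquely to a morphism of triangles, identifying the extension classes. Having this in hand, I would invoke \cref{lemma:cech-cocycle-of-atiyah-class} to represent $\at_E$ by the cocycle $\omega_{\alpha\beta}$, and observe that, through the barycentric connection and the explicit form of its curvature (\cref{remark:expression-for-barycentric-connection}), the whole simplicial Atiyah class is assembled purely out of the $\omega_{\alpha_0\alpha_i}$. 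Thus every differential form defining $\simpexpat{k}_{f^*\mathscr{F}}$ is literally the pullback of the corresponding form defining $\simpexpat{k}_{\mathscr{F}}$.

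Finally I would close the argument by observing that the two remaining operations — taking the trace of an endomorphism-valued simplicial form, and Dupont's fibre integration over the simplices — both commute with the pullback of forms along $f$: the trace is pointwise-algebraic, and the fibre integration is performed along the $\Delta^\bullet$-directions, which $f$ leaves untouched. The main obstacle I anticipate is purely one of bookkeeping the several distinct meanings of ``pullback'' that collide here — the derived pullback of sheaves, the ordinary pullback of locally free sheaves, the pullback of cohomology classes, and the canonical pullback of forms — and verifying their mutual compatibility; in particular one must check that pullback commutes with $\sheafend$ for finite-rank locally free sheaves, so that $f^*\at_E$ genuinely lands in the group in which $\at_{f^*E}$ lives.
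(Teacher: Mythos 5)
Your proposal is correct and follows essentially the same route as the paper's own proof: exactness of the derived pullback to transport Green's resolution, agreement of derived and ordinary pullback on locally free sheaves, functoriality of the ordinary Atiyah class via the distinguished-triangle argument, the observation that the simplicial class is assembled entirely from the $\omega_{\alpha_0\alpha_i}$, and finally compatibility of trace and fibre integration with pullback. Even the bookkeeping subtleties you flag (the several meanings of ``pullback'' and the compatibility of $\sheafend$ with pullback for finite-rank locally free sheaves) are precisely those the paper addresses in its footnote.
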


        \begin{lemma}\label{lemma:additive-on-split-implies-additive-on-short}
            If the total simplicial Atiyah classes are additive on every split exact sequence of coherent sheaves then they are additive on every short exact sequence of coherent sheaves.
            \begin{proof}
                Let $0\to\mathscr{F}\xrightarrow{\iota}\mathscr{G}\xrightarrow{\pi}\mathscr{H}\to0$ be a short exact sequence of coherent sheaves on $X$, and $t\in\mathbb{C}$ (which can be thought of as $t\in\Gamma(\mathbb{C},\mathbb{P}^1)$).
                Write $p\colon X\times\mathbb{P}^1\to X$ to mean the projection map.
                Define
                \begin{equation*}
                    \mathcal{N} = \Ker\big(p^*\mathscr{G}(1)\oplus p^*\mathscr{H}\xrightarrow{\pi(1)-t\cdot\id}p^*\mathscr{H}(1)\big)
                \end{equation*}
                where $(\pi(1)-t\cdot\id)\colon (g\otimes y, h)\mapsto \pi(g)\otimes y - h\otimes t$.
                We claim that this gives a short exact sequence
                \begin{equation*}
                    0\to p^*\mathscr{F}(1)\to\mathcal{N}\to p^*\mathscr{H}\to0
                \end{equation*}
                of sheaves over $X\times\mathbb{P}^1$, where the maps are the ``obvious'' ones: $p^*\mathscr{F}(1)\to\mathcal{N}$ is the map $\iota(1)\colon p^*\mathscr{F}(1)\to p^*\mathscr{G}(1)$ included into $p^*\mathscr{G}(1)\oplus p^*\mathscr{H}$ (which we prove lands in $\mathcal{N}$ below); and $\mathcal{N}\to p^*\mathscr{H}$ is the projection $p^*\mathscr{G}(1)\oplus p^*\mathscr{H}\to p^*\mathscr{H}$ restricted to $\mathcal{N}$.

                To prove surjectivity, let $h\in\Gamma(U,p^*\mathscr{H})$.
                Then $h\otimes t\in\Gamma(U,p^*\mathscr{H}(1))$.
                But \mbox{$\pi\colon\mathscr{G}\to\mathscr{H}$} is surjective, and thus so too is the induced map $\pi(1)\colon p^*\mathscr{G}(1)\to p^*\mathscr{H}(1)$.
                Hence there exists $g\otimes y\in\Gamma(U,p^*\mathscr{G}(1))$ such that $\pi(g\otimes y)=h\otimes t$.
                So $(g\otimes y, h)\in\mathcal{N}$ maps to $h$.

                To prove injectivity (and that this map is indeed well defined), we use the fact that tensoring with $\mathscr{O}(1)$ is exact, and so, in particular, $\iota(1)\colon p^*\mathscr{F}(1)\to p^*\mathscr{G}(1)$ is injective.
                The inclusion into the direct sum $p^*\mathscr{G}(1)\oplus p^*\mathscr{H}$ is injective by the definition of a direct sum, so all that remains to show is that the image of this composite map is contained inside $\mathcal{N}$.
                Let $f\otimes x\in\Gamma(U,p^*\mathscr{F}(1))$.
                Then this maps to $(\iota(f)\otimes x,0)\in p^*\mathscr{G}(1)\oplus p^*\mathscr{H}$, but this is clearly in the kernel of $\pi(1)-t\cdot\id$ since $\pi\iota(f)=0$.

                To prove exactness, it suffices to show that $\Ker(\mathcal{N}\to p^*\mathscr{F})\cong p^*\mathscr{F}(1)$.
                But
                \begin{align*}
                    \Ker(\mathcal{N}\to p^*\mathscr{F}) &= \{(g\otimes y, h)\in\mathcal{N} \mid h=0\}\\
                    &= \{(g\otimes y, h)\in p^*\mathscr{G}(1)\oplus p^*\mathscr{H} \mid h=0\text{ and }\pi(g)\otimes y-h\otimes t=0\}\\
                    &= \{(g\otimes y, h)\in p^*\mathscr{G}(1)\oplus p^*\mathscr{H} \mid \pi(g)\otimes y=0\}\\
                    &= \{(g\otimes y, h)\in p^*\mathscr{G}(1)\oplus p^*\mathscr{H} \mid (g\otimes y)\in\Im\iota(1)\}\\
                    &\cong p^*\mathscr{F}(1).
                \end{align*}

                Now we claim that the short exact sequence is split for $t=0$, and has $\mathcal{N}\cong p^*\mathscr{G}$ for $t\neq0$.
                Formally, we do this by looking at the pullback of the map $X\times\{t\}\to X\times\mathbb{P}^1$, but we can think of this as just ``picking a value for $t$''.

                For $t=0$, by definition,
                \begin{align*}
                    \mathcal{N} &= \Ker\big(p^*\mathscr{G}(1)\oplus p^*\mathscr{H}\xrightarrow{\pi(1)-t\cdot\id}p^*\mathscr{H}(1)\big)\\
                    &= \Ker\big(p^*\mathscr{G}(1)\oplus p^*\mathscr{H}\xrightarrow{\pi(1)}p^*\mathscr{H}(1)\big)\\
                    &\cong \Ker\big(p^*\mathscr{G}(1)\xrightarrow{\pi(1)}p^*\mathscr{H}(1)\big) \oplus p^*\mathscr{H}\\
                    &\cong p^*\mathscr{F}(1)\oplus p^*\mathscr{H}.
                \end{align*}
                For $t\neq0$, define the injective morphism $\varphi\colon p^*\mathscr{G}\to\mathcal{N}$ of coherent sheaves by $\varphi(g)=(g\otimes t,\pi(g))$.
                To see that this is also surjective, let $(g\otimes y, h)\in\mathcal{N}$.
                If $y=0$ then we must have $h=0$, and so $(g\otimes y, h) = (0,0) = \varphi(0\otimes0)$.
                If $y\neq0$ then $\pi(g)\otimes y-h\otimes t=0$, with $y,t\neq0$, whence $\pi(g)=\frac{y}{t}h$.
                Then $(g\otimes y, h) = (\frac{t}{y}g\otimes t, \pi(\frac{t}{y}g)) = \varphi(\frac{t}{y}g).$

                As one final ingredient, note that any coherent sheaf on $X$ pulled back to a sheaf on $X\times\mathbb{P}^1$ is flat over $\mathbb{P}^1$, and so $\mathcal{N}$ is flat over $\mathbb{P}^1$, since both $\mathscr{F}(1)$ and $\mathscr{H}$ are.
                Thus, for $\tau_t\colon X\times\{t\}\to X\times\mathbb{P}^1$ given by a choice of $t\in\mathbb{C}$, the derived pullback $\mathbb{L}\tau_t^*\mathcal{N}$ agrees with the usual pullback $\tau_t^*\mathcal{N}$.

                Now we use the \emph{$\mathbb{P}^1$-homotopy invariance} of de~Rham cohomology, which is the following statement: the induced map
                \begin{equation*}
                    \tau_t^*\colon\HH^\bullet\big(X\times\mathbb{P}^1,\Omega_{X\times\mathbb{P}^1}^\bullet\big) \to \HH^\bullet\big(X\times\{t\},\Omega_{X\times\{t\}}^\bullet\big)
                \end{equation*}
                does \emph{not} depend on the choice of $t$.
                Since $X\times\{t\}$ is (canonically) homotopic to $X$, we can identify $(p\tau_t)^*$ with the identity on $\HH^\bullet(X,\Omega_X^\bullet)$.
                Since the SES splits for $t=0$, by our hypothesis, flatness, and the fact that Green's construction is functorial under derived pullback, we know that
                \begin{align*}
                    \tau_0^*c(\mathcal{N}) = c(\mathbb{L}\tau_0^*\mathcal{N}) = c(\tau_0^*\mathcal{N}) &= c(\tau_0^*p^*\mathscr{F}(1)\oplus\tau_0^*p^*\mathscr{H})\\
                    &= c(\mathscr{F})\wedge c(\mathscr{H}).
                \end{align*}
                But we also know that $\mathcal{N}\cong p^*\mathscr{G}$ for $t\neq0$, and so
                \begin{equation*}
                    c(\mathscr{G}) = (p\tau_t)^*c(\mathscr{G}) = \tau_t^*c(\mathcal{N}).
                \end{equation*}
                So, finally, the $t$-invariance of $\tau^*$ tells us that
                \begin{equation*}
                    c(\mathscr{G}) = c(\mathscr{F})\wedge c(\mathscr{H}).\qedhere
                \end{equation*}
            \end{proof}
        \end{lemma}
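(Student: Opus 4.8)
The plan is to deform the given short exact sequence, through a one-parameter family, into its split counterpart, and then read off the equality of characteristic classes from the $\mathbb{P}^1$-homotopy invariance of de Rham cohomology. Throughout I write $c(-)$ for the total simplicial Atiyah class (the fibre integral of the trace, summed over $k$), so that the statement to prove is $c(\mathscr{G})=c(\mathscr{F})\cdot c(\mathscr{H})$ for any short exact sequence $0\to\mathscr{F}\xrightarrow{\iota}\mathscr{G}\xrightarrow{\pi}\mathscr{H}\to0$, given that this holds whenever the sequence splits.

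First I would build a single coherent sheaf on $X\times\mathbb{P}^1$ interpolating between $\mathscr{G}$ and $\mathscr{F}\oplus\mathscr{H}$. Writing $p\colon X\times\mathbb{P}^1\to X$ for the projection, $(1)$ for the twist by $\OO_{\mathbb{P}^1}(1)$, and $t$ for the tautological section of $\OO_{\mathbb{P}^1}(1)$, I would set
\[
    \mathcal{N}=\Ker\bigl(p^*\mathscr{G}(1)\oplus p^*\mathscr{H}\xrightarrow{\pi(1)-t\cdot\id}p^*\mathscr{H}(1)\bigr),
\]
with the map sending $(g\otimes y,h)\mapsto\pi(g)\otimes y-h\otimes t$. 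The point of this construction is that restricting to the fibre $X\times\{t\}$ should recover $\mathscr{G}$ for $t\neq0$ and the split object $\mathscr{F}\oplus\mathscr{H}$ for $t=0$. I would also exhibit $\mathcal{N}$ as an extension $0\to p^*\mathscr{F}(1)\to\mathcal{N}\to p^*\mathscr{H}\to0$, which is what guarantees that it is a genuine coherent family rather than an ad hoc gluing.

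Next I would carry out the two fibrewise checks. For $t\neq0$, the morphism $g\mapsto(g\otimes t,\pi(g))$ is an isomorphism $p^*\mathscr{G}\congto\mathcal{N}$, as one sees by solving $\pi(g)\otimes y=h\otimes t$ for $y\neq0$; for $t=0$ the defining map degenerates to $\pi(1)$ on the first summand alone, so $\mathcal{N}\cong p^*\mathscr{F}(1)\oplus p^*\mathscr{H}$. I would then observe that every pullback $p^*(-)$ of a coherent sheaf on $X$ is flat over $\mathbb{P}^1$, so $\mathcal{N}$—being sandwiched between two such pullbacks—is flat over $\mathbb{P}^1$ as well. Flatness ensures that for $\tau_t\colon X\times\{t\}\hookrightarrow X\times\mathbb{P}^1$ the derived restriction $\mathbb{L}\tau_t^*\mathcal{N}$ coincides with the ordinary one, so \cref{lemma:functorial-under-pullbacks} identifies $c(\tau_t^*\mathcal{N})$ with $\tau_t^*c(\mathcal{N})$. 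Invoking $\mathbb{P}^1$-homotopy invariance—that $\tau_t^*\colon\HH^\bullet(X\times\mathbb{P}^1)\to\HH^\bullet(X\times\{t\})\cong\HH^\bullet(X)$ is independent of $t$—I would conclude $\tau_0^*c(\mathcal{N})=\tau_1^*c(\mathcal{N})$. The left side is $c(\mathscr{F}\oplus\mathscr{H})=c(\mathscr{F})\cdot c(\mathscr{H})$ by the split-additivity hypothesis, and the right side is $c(\mathscr{G})$, which finishes the argument.

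The main obstacle I expect is the construction of $\mathcal{N}$ together with its fibrewise analysis: verifying that the kernel is exact in the claimed way, that its two special fibres are exactly $\mathscr{G}$ and $\mathscr{F}\oplus\mathscr{H}$, and—most delicately—that $\mathcal{N}$ is flat over $\mathbb{P}^1$, since it is precisely this flatness that licenses replacing the derived pullback by the ordinary one and thereby lets \cref{lemma:functorial-under-pullbacks} apply on each fibre. Once these geometric points are secured, the passage to cohomology is a formal consequence of functoriality under pullback and homotopy invariance, requiring no further computation.
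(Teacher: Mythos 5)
Your proposal is correct and follows essentially the same route as the paper's own proof: the same deformation sheaf $\mathcal{N}=\Ker\bigl(p^*\mathscr{G}(1)\oplus p^*\mathscr{H}\to p^*\mathscr{H}(1)\bigr)$ on $X\times\mathbb{P}^1$, the same fibrewise identifications ($\mathscr{G}$ for $t\neq0$, the split sum for $t=0$), the same flatness argument to replace derived by ordinary pullback, and the same appeal to $\mathbb{P}^1$-homotopy invariance of de Rham cohomology. The steps you flag as the expected obstacles (exactness of the kernel sequence and flatness of $\mathcal{N}$) are exactly the verifications the paper carries out explicitly, and they go through as you anticipate.
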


        \begin{lemma}\label{lemma:additive-on-split}
            The total simplicial Atiyah classes are additive on every split exact sequence of coherent sheaves.
            \begin{proof}
                This is exactly \cite[Lemma~2.6]{Green1980}, but the essence of the proof is simple: twisting cochains behave nicely with respect to direct sums, as do all of the constructions giving the simplicial connections generated in degree zero.
            \end{proof}
        \end{lemma}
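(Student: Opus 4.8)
The plan is to show that every stage of Green's construction is compatible with direct sums, and then to read off additivity from the block structure that results. Since the sequence splits we have $\mathscr{G}\cong\mathscr{F}\oplus\mathscr{H}$, and the essential point is that the total simplicial Atiyah class is independent of the choices made (the choice of twisting cochain, cover, and local connections; see \cref{subsection:the-whole-story} and \cite[Theorem~2.4]{Green1980}). This independence is exactly what gives us license to compute the class of $\mathscr{G}$ using a resolution and a connection that are \emph{manifestly} assembled from those of $\mathscr{F}$ and $\mathscr{H}$, rather than some arbitrary choice for $\mathscr{G}$ itself.

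First I would establish that Green's resolution of $\mathscr{F}\oplus\mathscr{H}$ may be taken to be the direct sum $\mathcal{E}^{\bullet,\anotherbullet}_{\mathscr{F}}\oplus\mathcal{E}^{\bullet,\anotherbullet}_{\mathscr{H}}$ of the two individual resolutions by vector bundles on the nerve. This is the genuine content of the lemma and the one place where one must open up the internals of Green's machine: the resolving complex is produced from a \emph{twisting cochain}, and the twisting cochain of a direct sum can be chosen as the (block) direct sum of the two constituent twisting cochains, since it is built additively out of the local homological data. This is precisely \cite[Lemma~2.6]{Green1980}.

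Next I would choose the local flat connections on the summands as direct sums $\nabla_\alpha^{\mathscr{F}}\oplus\nabla_\alpha^{\mathscr{H}}$. Because the barycentric connection is generated in degree zero through $\nabla_p^\mu=\sum_{i=0}^p t_i\nabla_{\alpha_i}$, it then inherits a block-diagonal shape, and so does its curvature:
\[
    \kappa\big(\nabla_\bullet^\mu\big)
    =
    \kappa\big(\nabla_\bullet^{\mu,\mathscr{F}}\big)
    \oplus
    \kappa\big(\nabla_\bullet^{\mu,\mathscr{H}}\big).
\]
Applying the generalised invariant polynomial $P_\anotherbullet=\{\tr\circ\mu_n\}$ to this block-diagonal curvature, the $\oplus$-additivity guaranteed by \cref{definition:generalised-invariant-polynomial} makes $\widetilde{P}_\anotherbullet\big(\kappa(\nabla_\bullet^\mu)\big)$ split as the sum of the values on the two blocks — concretely, for the composition version this is just $\tr\big((A\oplus B)^k\big)=\tr(A^k)+\tr(B^k)$. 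Since the trace and Dupont's fibre integration (\cref{lemma:dupont's-fibre-integration}) are linear, this splitting persists after passing to the fibre-integrated representatives in the Čech–de Rham bicomplex, giving additivity for the exponential classes (the Chern character) and, once the total class is reassembled, the Whitney law for the standard classes.

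The hard part will be the first step: one must check that Green's resolution respects direct sums \emph{on the nose}, at the level of twisting cochains, and not merely up to quasi-isomorphism, because we need the honest direct-sum connection and its honestly block-diagonal curvature for the algebra of the final step to go through unchanged. Everything downstream — the passage to the barycentric connection, the evaluation of the generalised invariant polynomial, and the fibre integration — is then bookkeeping, controlled entirely by the $\oplus$-additivity already secured for generalised invariant polynomials.
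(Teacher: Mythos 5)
Your proposal is correct and follows essentially the same route as the paper, which simply cites \cite[Lemma~2.6]{Green1980} and notes that twisting cochains and the constructions giving connections generated in degree zero all behave well with respect to direct sums --- precisely the block-diagonal argument you spell out. Your additional appeal to the independence of choices (\cite[Theorem~2.4]{Green1980}) to license computing $c(\mathscr{G})$ with the direct-sum resolution is a point the paper leaves implicit, but it is the same argument, just with the bookkeeping made explicit.
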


        \begin{corollary}
            The total simplicial Atiyah classes are additive on every short exact sequence of coherent sheaves.
        \end{corollary}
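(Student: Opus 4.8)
The plan is to combine the two immediately preceding lemmas, which have been arranged precisely so that their conjunction yields the statement, so that the corollary is a purely formal consequence. First I would invoke \cref{lemma:additive-on-split}, which establishes that the total simplicial Atiyah classes are additive on every \emph{split} exact sequence of coherent sheaves; this serves as the base case, its content reducing to the good behaviour of twisting cochains (and of the generated-in-degree-zero connections) with respect to direct sums.

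Then I would apply \cref{lemma:additive-on-split-implies-additive-on-short}, whose hypothesis is exactly the conclusion of \cref{lemma:additive-on-split}: namely, that additivity on split exact sequences already forces additivity on arbitrary short exact sequences. Chaining the two implications gives additivity on every short exact sequence of coherent sheaves, and there is nothing further to verify.

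Accordingly, the only genuine work has already been discharged inside \cref{lemma:additive-on-split-implies-additive-on-short}, through its deformation argument over $X\times\mathbb{P}^1$: one constructs the interpolating sheaf $\mathcal{N}$ which is split at $t=0$ and isomorphic to $p^*\mathscr{G}$ for $t\neq0$, exploits flatness over $\mathbb{P}^1$ to identify the derived and ordinary pullbacks $\tau_t^*\mathcal{N}$, and invokes $\mathbb{P}^1$-homotopy invariance of de Rham cohomology to conclude that the class at $t=0$ agrees with that at $t\neq0$. Since that deformation-to-the-split-case machinery is what carries all the analytic weight, I expect no obstacle in the corollary itself: the proof is a one-line syllogism combining \cref{lemma:additive-on-split,lemma:additive-on-split-implies-additive-on-short}.
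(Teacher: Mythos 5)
Your proposal is correct and is exactly the paper's own argument: the corollary is proved as the one-line syllogism of \cref{lemma:additive-on-split-implies-additive-on-short} and \cref{lemma:additive-on-split}, with all the real work (the deformation over $X\times\mathbb{P}^1$) already contained in the former. Nothing is missing.
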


        \begin{proof}
            This is simply the syllogism of \cref{lemma:additive-on-split-implies-additive-on-short} and \cref{lemma:additive-on-split}.
        \end{proof}

    \subsection{The compact case}\label{subsection:the-compact-case}

        In the case where $X$ is compact, we can appeal to \cite[Theorem~6.5]{Grivaux2009}, which gives a list of conditions that, if satisfied, ensure uniqueness of Chern classes: if we can show that the fibre integrals of traces of the simplicial Atiyah classes satisfy certain axioms, then we know that they are exactly the same classes as given by any other construction of Chern classes.

        Since tDR cohomology satisfies conditions ($\alpha$) to ($\delta$) of \cite[§6.2]{Grivaux2009}, it suffices to check that the following things hold true:
        \begin{enumerate}[(i)]
            \item \emph{The construction agrees with `the' classical one for line bundles.}
                Although \cite[Lemma~2.5]{Green1980} tells us that the construction actually agrees with the classical one for \emph{arbitrary} vector bundles, we explain the specific case of line bundles in \cref{lemma:agrees-for-line-bundles}.
            \item \emph{The construction is functorial under pullbacks.}
                This is \cref{lemma:functorial-under-pullbacks}.
            \item \emph{The construction gives us the Whitney sum formula for short exact sequences.}
                We have split the proof of this into two steps: firstly, showing in \cref{lemma:additive-on-split-implies-additive-on-short} that it suffices to prove that we have the Whitney sum formula for \emph{split} exact sequences; then showing in \cref{lemma:additive-on-split} that we do indeed have the Whitney sum formula for split exact sequences.
            \item \emph{The construction gives us the Grothendieck-Riemann-Roch formula for closed immersions.}
                Although this is given as a requirement in \cite[Theorem~6.5]{Grivaux2009}, it is actually not necessary, thanks to e.g. \cite[Proposition~3.1]{Grivaux2012}, whose proof relies only on the three properties above, following some classical algebraic geometry methods (such as deformation to the normal cone).
        \end{enumerate}

        \begin{remark}
            Again, when $X$ is compact, \cite[Lemma~2.7]{Green1980} gives a direct proof of the fact that the $(p,p)$-term of the fibre integral of the trace of these simplicial Atiyah classes agrees (up to a constant factor) with the Chern classes of Atiyah-Hirzenbruch.
        \end{remark}

        \begin{corollary}
        \label{corollary:main-corollary}
            If $X$ is a compact complex-analytic manifold, then, for any coherent analytic sheaf, the Chern classes given by the fibre integrals of the traces of the simplicial Atiyah classes (following the construction described in \cref{subsection:the-whole-story}) agree with all other constructions of Chern classes of coherent analytic sheaves.
        \end{corollary}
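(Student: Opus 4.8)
The plan is to invoke the axiomatic characterisation of Chern classes for coherent analytic sheaves on compact complex manifolds, namely \cite[Theorem~6.5]{Grivaux2009}: once one fixes a suitable target cohomology theory, any assignment of classes satisfying a short list of axioms is forced to coincide with every other such assignment. Thus the entire task reduces to checking that the construction summarised in \cref{subsection:the-whole-story} (fibre integrals of traces of the simplicial Atiyah classes, viewed as living in tDR cohomology) does indeed satisfy those axioms. In other words, I would not prove agreement with any \emph{particular} rival construction directly; instead I would prove agreement with the unique object that the axioms single out, and let uniqueness do the rest.

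Concretely, I would first record that the receiving cohomology theory is admissible for Grivaux's framework: tDR cohomology satisfies conditions ($\alpha$) to ($\delta$) of \cite[§6.2]{Grivaux2009}, and (by \cref{remark:when-can-we-refine-to-hodge}) our closed Čech--de Rham representatives genuinely refine to tDR classes. I would then verify the remaining axioms one at a time, each of which has already been isolated as a lemma: normalisation on line bundles is \cref{lemma:agrees-for-line-bundles}; functoriality under (derived) pullback is \cref{lemma:functorial-under-pullbacks}; and the Whitney sum formula for short exact sequences is the corollary obtained by combining \cref{lemma:additive-on-split-implies-additive-on-short} with \cref{lemma:additive-on-split}. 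The fourth requirement of \cite[Theorem~6.5]{Grivaux2009}, a Grothendieck--Riemann--Roch formula for closed immersions, I would dispatch not by direct verification but by citing \cite[Proposition~3.1]{Grivaux2012}, whose proof deduces it from the three properties above together with standard deformation-to-the-normal-cone arguments; this is the one place where an axiom that looks independent is in fact subsumed, and I would flag that explicitly so the reader does not expect a separate computation.

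The main obstacle is the Whitney sum axiom, and everything nontrivial is concentrated there. The difficulty is that our construction is manifestly well behaved only on \emph{split} exact sequences (where twisting cochains and degree-zero-generated connections respect direct sums, giving \cref{lemma:additive-on-split}), so the work is in propagating additivity from split sequences to arbitrary ones. This is exactly the content of \cref{lemma:additive-on-split-implies-additive-on-short}: one builds the kernel sheaf $\mathcal{N}$ over $X\times\mathbb{P}^1$ interpolating between the split sequence (at $t=0$) and $p^{*}\mathscr{G}$ (for $t\neq0$), checks flatness over $\mathbb{P}^1$ so that derived and ordinary restriction agree, and then invokes $\mathbb{P}^1$-homotopy invariance of de Rham cohomology to conclude that the two specialisations give the same class. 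A secondary but genuine point I would not skip is independence of all choices (the cover and its refinements, the local flat connections, and the twisting cochain); this underpins the very existence of the classes being characterised, and I would point to the proof of \cite[Theorem~2.4]{Green1980} for it. Granting these inputs, the corollary is then a one-line application: the axioms hold, so by the uniqueness of \cite[Theorem~6.5]{Grivaux2009} our classes agree with all others.
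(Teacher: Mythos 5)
Your proposal is correct and follows the paper's own argument essentially verbatim: it invokes \cite[Theorem~6.5]{Grivaux2009} after noting that tDR cohomology satisfies conditions ($\alpha$)--($\delta$), verifies normalisation, pullback functoriality, and the Whitney sum formula via \cref{lemma:agrees-for-line-bundles}, \cref{lemma:functorial-under-pullbacks}, \cref{lemma:additive-on-split-implies-additive-on-short}, and \cref{lemma:additive-on-split}, and dispatches the Grothendieck--Riemann--Roch axiom by citing \cite[Proposition~3.1]{Grivaux2012}, exactly as the paper does. Your additional remarks on choice-independence via \cite[Theorem~2.4]{Green1980} match the paper's summary in \cref{subsection:the-whole-story}, so there is nothing to correct.
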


\addcontentsline{toc}{section}{References}
\printbibliography

\end{document}